\newlength{\defbaselineskip}
\newcommand{\setlinespacing}[1]                                     %
           {\setlength{\baselineskip}{#1 \defbaselineskip}}
\theoremstyle{plain}
\newtheorem{thm}{Theorem}[section]
\newtheorem{cor}[thm]{Corollary}
\newtheorem{lem}[thm]{Lemma}
\newtheorem{prop}[thm]{Proposition}
\theoremstyle{definition}
\newtheorem{defn}{Definition}[section]
\newtheorem{ass}{Assumption}[section]
\newtheorem{rmk}{Remark}[section]
\newcommand{\eps}{\varepsilon}
\newcommand{\cP}{\mathcal{P}}
\newcommand{\cH}{\mathcal{H}}
\newcommand{\cL}{\mathcal{L}}
\newcommand{\cM}{\mathcal{M}}
\newcommand{\cD}{\mathcal{D}}
\newcommand{\cO}{\mathcal{O}}
\newcommand{\cS}{\mathcal{S}}
\newcommand{\cZ}{\mathcal{Z}}
\newcommand{\bH}{\mathbb{H}}
\newcommand{\bP}{\mathbb{P}}
\newcommand{\bR}{\mathbb{R}}
\newcommand{\bN}{\mathbb{N}}
\newcommand{\sF}{\mathscr{F}}
\newcommand{\sL}{\mathscr{L}}
\newcommand{\sS}{\mathscr{S}}
\newcommand{\sB}{\mathscr{B}}
\newcommand{\sM}{\mathscr{M}}
\newcommand{\sP}{\mathscr{P}}
\newcommand{\rrow}{\rightarrow}
\DeclareMathOperator*{\esssup}{ess\, sup}
\DeclareMathOperator*{\essinf}{ess\, inf}
\makeatletter\@addtoreset{equation}{section} \makeatother
\begin{document}

\title{Optimal Liquidation in Target Zone Models and Neumann Problem of Backward SPDEs with Singular Terminal Condition\footnotemark[1]}
\author{Robert Elliott\footnotemark[2] \quad \quad Jinniao Qiu\footnotemark[2]  \quad \quad Wenning Wei\footnotemark[2] }

\footnotetext[1]{This work was partially supported by the National Science and Engineering Research Council of Canada and by the start-up funds from the University of Calgary. }
\footnotetext[2]{Department of Mathematics \& Statistics, University of Calgary, 2500 University Drive NW, Calgary, AB T2N 1N4, Canada. \textit{E-mail}: \texttt{relliott@ucalgary.ca} (R. Elliott), \texttt{jinniao.qiu@ucalgary.ca} (J. Qiu),  \texttt{wenning.wei@ucalgary.ca} (W. Wei).}

\maketitle

\begin{abstract}
We study the optimal liquidation problems in target zone models using dynamic programming methods. Such control problems allow for stochastic differential equations with reflections and random coefficients. The value function is characterized with a Neumann problem of backward stochastic partial differential equations (BSPDEs) with singular terminal conditions. The existence and the uniqueness of strong solution to such BSPDEs are addressed, which in turn yields the optimal feedback control. In addition, the unique existence of strong solution to Neumann problem of general semilinear BSPDEs in finer functions space, a comparison theorem, and a new link between forward-backward stochastic differential equations and BSPDEs are proved as well.

\end{abstract}

{\bf AMS Subject Classification:} 93E20, 60H15, 91G80

{\bf Keywords:} optimal liquidation, stochastic control, Neumann problem, stochastic Hamilton-Jacobi-Bellman equation, backward stochastic partial differential equation, singular terminal condition.

\section{Introduction}
Let $(\Omega, \bar{\sF}, (\bar{\sF}_t)_{t\in[0,T]},\bP)$ be a complete filtered probability space with $(\bar{\sF}_t)_{t\in[0,T]}$ being the augmented filtration generated by an independent point process $\tilde{J}$ on a non-empty Borel set $\mathcal{Z}\subset\bR^l$ with finite characteristic measure $\mu(dz)$ and two independent Wiener processes $W$ and $B$. The set $\mathcal{Z}$  is endowed with its Borel $\sigma$-algebra $\mathscr{Z}$, and the associated Poisson random measure is denoted by $\pi(dt,dz)$. Throughout this paper, we denote by $(\sF_t)_{t\in[0,T]}$ the augmented filtration generated by $W$. The predictable $\sigma$-algebras on $\Omega\times[0,T]$ corresponding to $(\sF_t)_{t\in[0,T]}$ and  $(\bar{\sF}_t)_{t\in[0,T]} $ are denoted by $\sP$ and $\bar{\sP}$, respectively.

The concerned optimal liquidation in target zone models may be described as a stochastic optimal control problem as follows: for $q\in (1,\infty)$,
\begin{align}
\min_{(\xi,\rho)\in \mathscr A} 
E\left[\int_0^T\left(\eta_s(y^{0,y}_s)|\xi_s|^q +\lambda_s(y^{0,y}_s)|x^{0,x}_s|^q\right)\,ds
+\int_0^T\int_{\mathcal {Z}}\gamma_s(y^{0,y}_s,z)|\rho_s(z)|^q\,\mu(dz)ds
 \right], \label{OCP}
\end{align}
subject to
\begin{equation}\label{controlled-states}
\left\{
\begin{split}
x_r^{0,x}&=x-\int_0^r\xi_s\,ds -\int^r_0\int_ {\mathcal {Z}}\rho_s(z)\,\pi(dz,ds),\quad r\in[0,T],\\
x_T^{0,x}&=0,\\
y_r^{0,y}&=y+\int_0^r\beta_s(y^{0,y}_s)\,ds+\int_0^r\sigma_s(y^{0,y}_s)\,dW_s+\int_0^r\bar{\sigma}_s(y^{0,y}_s)\,dB_s+L_r,\quad r\in[0,T],\\
y^{0,y}_r&\geq a, \text{a.s. for all }r\in[0,T].\\
\int_0^T&(y^{0,y}_s-a)\,dL_s=0, \quad \text{(Skorohod condition)}
\end{split}
\right.
\end{equation}
where the Wiener processes $W$ and $B$ have dimensions $d$ and $m$ respectively. The controlled real-valued \textit{state process} $(x_t)_{t\in[0,T]}$  describes the number of assets/securities held at time $t \in [0,T]$  in a portfolio liquidation framework, and it is governed by a pair of \textit{controls} $(\xi,\rho)$ that represent the rates at which the portfolio is liquidated in the primary market and the block trades are placed, for instance, in the dark pools, respectively, with the Poisson random measure $\pi$ governing dark pool executions.  The set of \textit{admissible controls}, denoted by $\mathscr A$, consists of all pairs $(\xi,\rho)\in\cL^q_{\bar{\sF}}(0,T; \bR)\times \cL^{q}_{\bar{\sF}}(0,T;L^q(\mathcal{Z}))$ ($q\in(1,\infty)$) satisfying almost surely the \textit{terminal state constraint}
\begin{equation} \label{terminal-condition}
	x_{T}=0.
\end{equation}

The real-valued \textit{uncontrolled} process $(y_t)_{t\in[0,T]}$, also called \textit{factor process}, is satisfying a stochastic differential equations (SDE) reflected from below, with the possibly random and nonlinear coefficients $\beta_t(y;\omega), \bar \sigma_t(y;\omega)$ and $\sigma_t(y;\omega)$ being $\sF$-adapted. Such reflected processes have often been proposed as models (for instance, currency exchange rates) in target zones; see \cite{bertola1992target,krugman1991target,neuman2016optimal} for instance. As discussed in \cite{neuman2016optimal} for optimal portfolio liquidation in target zone models, we use the process $(y_t)_{t\in[0,T]}$ to model the price evolution of the holding assets/securities. In this paper, we shall use $x^{s,x,\xi,\rho}_t$ and $y^{s,y}_t$ for $0\leq s\leq t\leq T$ to indicate the dependence of the state process on the control $(\xi,\rho)$, the initial time $s \in [0,T]$ and initial states $x,y\in \mathbb{R}$.

Define the dynamical cost function
\begin{align*}
J_t(x,y;\xi,\rho)=E\left[\int_t^T\left(\eta_s(y^{t,y}_s)|\xi_s|^q +\lambda_s(y^{t,y}_s)|x^{t,x,\xi,\rho}_s|^q\right)\,ds \right. \\
+\int_t^T\int_{\mathcal {Z}}\gamma_s(y^{t,y}_s,z)|\rho_s(z)|^q\,\mu(dz)ds
\left.~\Big|\bar\sF_t\right], \quad t\in[0,T],
\end{align*}
where the coefficients $\eta_s(y)$, $\lambda_s(y)$ and $\gamma_s(y,z)$ are $\sF$-adapted.
The value function is given by
\begin{align}
V_t(x,y) =\essinf_{(\xi,\rho)\in\mathscr A}J_t(x,y;\xi,\rho)\quad t\in[0,T). \label{value-func}
\end{align}
In a portfolio liquidation problem, the terms associated with coefficients $\eta_t(y;\omega)$ and $\lambda_t(y;\omega)$ measure the market impact costs and the investor's desire for early liquidation (``risk aversion''), respectively, while the term associated with $\gamma_t(y;\omega)$ denotes the so-called \textit{slippage} or \textit{adverse selection costs} associated with the execution of dark pool orders; see \cite{GraeweHorstQui13} for instance.
 The value function $V_t(x,y)$ measures the cost of liquidating the portfolio comprising $x$ shares during the time interval $[t,T]$, given the current value $y$ of the factor process, and the terminal constraint (\ref{terminal-condition}) reflects the fact that full liquidation is required by the terminal time.

Models of optimal portfolio liquidation \textit{without} target zones have been extensively studied in the mathematical finance and stochastic control literature in recent years; see, e.g., \cite{AlmgrenChriss00,AnkirchnerKruse12,Forsyth2012,GatheralSchied11,GraeweHorstQui13,HorstNaujokat13,Horst-Qiu-Zhang-14,Kratz13,KratzSchoeneborn13,kruse2016minimal,Schied13}. By contrast, the optimal liquidation in target zone models has just caused an attention only recently. By means of catalytic superprocesses, Neuman and Schied \cite{neuman2016optimal} studied a class of optimal liquidation problems in target zone models, which are different from our concerned problem \eqref{OCP} in the following five respectives: (i) the terminal state constraint like \eqref{terminal-condition} is not attached, i.e., the full liquidation is not required in \cite{neuman2016optimal}; (ii) the optimization therein is over strategies that only trade when the price process is located at the barrier, and it does not allow block trades, while in our liquidation problem \eqref{OCP} we do not consider trading at the barrier but allow both the continuous trades in the primary market and the block trades in dark pools; (iii) the corresponding power $q$ in \cite{neuman2016optimal} is restricted in $[2,\infty)$; (iv) the (reflected) price processes are of Markovian type, while the coefficients in problem \eqref{OCP} may be random and thus the price process $y^{0,y}_{\cdot}$ may not be Markovian; (v) the corresponding stochastic control problem in \cite{neuman2016optimal} was solved by means of a scaling limit of critical branching particle systems, also known as a catalytic superprocess, whereas in this work, we use the dynamic programming methods. A more recent work by Belak, Muhle-Karbe and Ou \cite{belak2018optimal} also shows some results on optimal liquidation in target zone models, which is concerned with a class of linear quadratic cases without terminal state constraint \eqref{terminal-condition}.

In this paper, we use the general dynamic programming principle for controlled SDEs with random coefficients (see \cite{Peng_92,qiu2017viscosity}). Formally, we may derive the corresponding stochastic Hamilton-Jacobi-Bellman equation, which together with the $q$th-power structure of the cost functional further suggests a multiplicative decomposition of the value function of the form
\begin{equation} \label{ansatz}
  V_t(x,y)=u_t(y)|x|^q \quad \textrm{and}\quad \Psi_t(x,y)=\psi_t(y)|x|^q,
\end{equation}
for a pair of adapted processes $(u,\psi)$ that satisfies the following backward stochastic partial differential equation (BSPDE): 
\begin{equation}\label{ValueFunction_u}
\left\{
\begin{split}
-du_t(y)&=\bigg[\alpha  D^2u +\sigma^* D\psi 
+\beta  Du +\lambda  -\frac{|u|^{q^*}}{(q^*-1)|\eta|^{q^*-1}}-\mu(\cZ) u
\\
&\quad \quad
+\int_{\mathcal{Z}}\frac{\gamma_{\cdot}(\cdot,z) u}{(|\gamma_{\cdot}(\cdot,z)|^{q^*-1}+|u |^{q^*-1})^{q-1}}\,\mu(dz)\bigg](t,y)\,dt
-\psi_t(y) dW_t,\\
&\quad \quad  (t,y)\in[0,T)\times\cD,\\
Du_t(a)&=0,\quad t\in[0,T).\\
u_T(y)&=\infty, \quad  y\in\bar{\cD},
\end{split}
\right.
\end{equation}
where $q^*=\frac{q}{q-1}$ is the H$\ddot{\text{o}}$lder conjugate of $q$ and
$$
\alpha_t(y):=\frac{1}{2}\big[\sigma^*_t(y)\sigma_t(y)+\bar\sigma^*_t(y)\bar\sigma_t(y)\big].
$$

The preceding BSPDE has a nonlinear growth on $u_t(y)$ and is endowed with a Neumann boundary condition and a singular terminal value. To the best of our knowledge, such a Neumann problem of BSPDEs has never been studied before, even though BSPDEs have been extensively studied in the applied probability and financial mathematics literature; see, e.g., \cite{bender2016-first-order,Bensoussan-fbsd-1983,Bayraktar-Qiu_2017,DuTangZhang-2013,EnglezosKaratzas09,Hu_Ma_Yong02,Mania-Tevzaze-2003}. In fact, the Neumann problem of BSPDEs has just caused an attention recently. Bayraktar and Qiu \cite{Bayraktar-Qiu_2017} obtained the existence and uniqueness of strong solutions for certain types of Neumann problems of BSPDEs on bounded domains and under standard Lipschitz assumptions. However, the methods adopted in \cite{Bayraktar-Qiu_2017} are not applicable to BSPDE \eqref{ValueFunction_u} because of the nonlinear growth, unbounded domain and the singular terminal condition. 

In this work, we prove the existence and uniqueness of the strong solution to BSPDE \eqref{ValueFunction_u} from which the optimal control is derived via the verification theorem. To construct and verify the optimal control requires the composition of the strong solution and the factor process $(y_t^{0,y})_{t\geq 0}$. However, the insufficient regularity of the strong solution prevents us from using the existing It\^o-Kunita-Wentzell formula established in \cite[Lemmas 4.1]{Bayraktar-Qiu_2017}. In fact, this difficulty motivates us to prove the verification theorem by means of a link between BSPDEs and forward-backward stochastic differential equations (FBSDEs). Nevertheless, our proof for the link requires certain regularity of the solution including the \textit{boundedness} of its gradient. To ensure the boundedness of gradients, the unique existence of strong solution is established in function spaces (see $\cM^1$ in Section 2) which are finer than those in \cite{Bayraktar-Qiu_2017}. We first prove the existence and uniqueness of solutions in space $\cM^1$ for Neumann problems of general semilinear BSPDEs,  a comparison theorem is also established under the boundedness assumption on gradients of solutions, and then we show that a solution to the BSPDE with singular terminal value may be obtained as the limit of a sequence of solutions to BSPDEs with finite terminal values. It is worth noting that the bounded estimate of gradients is derived from estimates of the approximating sequence and the existing maximum principles for weak solutions of quasilinear BSPDEs with Dirichlet boundary conditions (see \cite{fu2017maximum,QiuTangMPBSPDE11}). Finally, the uniqueness of strong solution is obtained via the verification theorem and the comparison theorem. 


	The remainder of this paper is organized as follows. Our main assumptions and results are summarized in Section 2. Section 3 is devoted to the proof of the link between FBSDEs and BSPDEs. In Section 4, we prove the existence and uniqueness of strong solution for Neumann problems of general semilinear BSPDEs as well as a comparison theorem. The existence and uniqueness of strong solution to BSPDE \eqref{ValueFunction_u} and the verification theorem are established in Section 5. Finally, we recall in the appendix the generalized It\^o-Wentzell formula with a corollary and a maximum principle for weak solutions of quasilinear BSPDEs in general domains.

\section{Preliminaries and main result}
We first introduce some notations. Set $\cD=[a,+\infty)$ and denote by $H^{m,p}(\cD)$ the space of all the functions on $\cD$ with up to $m$th-derivatives in $L^p(\cD)$ for $p\in[1,\infty]$. We write $H^m(\cD)$ instead of $H^{m,2}(\cD)$ for simplicity, and we use $\langle\cdot,\,\cdot\rangle$ and $\|\cdot\|$ to denote respectively the inner product and norm in the usual Hilbert space $L^2(\cD):=H^{0,2}(\cD)$.

For $p\in[1,\infty]$ and a Banach space $\bH$ with norm $\|\cdot\|_{\bH}$, the space $L^p(\Omega,\sF_T;\bH)$ is the set of all $\bH$-valued $\sF_T$-measurable and $L^p$-integrable random variables. For $0\leq s\leq t\leq T$,  we denote by $\sS^p_{\sF} (s,t;\bH)$ the set of all the $\bH$-valued and $\sF_r$-adapted continuous processes $(X_{r})_{r\in [s,t]}$ such that
\[
	\|X\|_{\sS _{\sF}^p(s,t;\bH)}:= \left\|  \sup_{r\in [s,t]} \|X_r\|_{\bH} \right\|_{L^p(\Omega)}< \infty.
\]
With a subscript, we define $\sS^{p}_{w,\sF} (s,t;\bH)$ as the space of  all the $\bH$-valued and $\sF_r$-adapted \textit{weakly} continuous processes\footnote{This means that for any $\phi$ in $\bH^*$ (the dual space of $\bH$), the mapping $r \mapsto \phi(X_r)$ is a.s. continuous on $[s,t]$.} $(X_{r})_{r\in [s,t]}$, equipped with the same norm: 
$\|\cdot\|_{\sS _{w,\sF}^p(s,t;\bH)}= \|\cdot\|_{\sS _{\sF}^p(s,t;\bH)}$. 
By $\sL^p_{\sF}(s,t;\bH)$, we denote the class of $\bH$-valued $\sF_r$-adapted processes $(u_r)_{r\in[s,t]}$ such that
\begin{align*}
\|u\|_{\sL^p_{\sF}(s,t;\bH)} &:=\left\| \|u(\cdot)\|_{\bH}    \right\|_{L^p(\Omega\times[s,t])}  <\infty.
\end{align*}
For $p\in[1,\infty)$,  the space $\sM^p_{\sF}(s,t;\bH)$ is the set of all the $\bH$-valued processes $(u_t)_{t\in[s,t]}$ belonging to $\sL^p_{\sF}(s,t;\bH)$ and satisfying
\begin{align*}
\|u\|_{\sM^p_{\sF}(s,t;\bH)} &:=\left(\textrm{esssup}_{\omega\in \Omega}\sup_{r\in[s,t]}E\left[\int_r^t\|u(\omega ,\tau , \cdot)\|^p_{\bH}\,d\tau | \mathcal{F}_r\right]\right)^{1/p}<\infty.
\end{align*}
  In a similar way, we define $\sS _{\bar{\sF}}^p(s,t;\bH)$, $\sL^p_{\bar{\sF}}(s,t;\bH)$ and $\sM^p_{\bar{\sF}}(s,t;\bH)$, and all these defined spaces of processes are complete.

%

For $k=0$ or $1$, and for a nonempty domain $\cO\subseteq\cD$,  denote 
\begin{align*}
&\cH^k([s,t]\times\cO):=\left(\sS^2_{\sF}(s,t;H^k(\cO))\cap\sL^2_{\sF}(s,t;H^{k+1}(\cO))\right)\times\sL^2_{\sF}(s,t;H^{k}(\cO)),
\\
&\cM^k([s,t]\times\cO):=\left(\sS^{\infty}_{\sF}(s,t;H^k(\cO))\cap\sM^2_{\sF}(s,t;H^{k+1}(\cO))\right) \times\sM^2_{\sF}(s,t;H^{k}(\cO)),
\end{align*} 
equipped with norms:
\begin{align*}
\|(u,\psi)\|_{\cH^{k}([s,t]\times \cO)}
&=\|u\|_{\sS^{2}_{\sF}(s,t;H^k(\cO))} + \|u\|_{\sL^2_{\sF}(s,t;H^{k+1}(\cO))} + \|\psi\|_{\sL^2_{\sF}(s,t;H^{k}(\cO))},
\\
\|(u,\psi)\|_{\cM^{k}([s,t]\times \cO)}
&=\|u\|_{\sS^{\infty}_{\sF}(s,t;H^k(\cO))} + \|u\|_{\sM^2_{\sF}(s,t;H^{k+1}(\cO))} + \|\psi\|_{\sM^2_{\sF}(s,t;H^{k}(\cO))}.
\end{align*}
Obviously, we have $\cM^k([s,t]\times\cO) \subset \cH^k([s,t]\times\cO)$. For simplicity, we write $\cH^k=\cH^k([0,T]\times\cD)$ and $\cM^k=\cM^k([0,T]\times\cD)$.  


\begin{defn} \label{defn-soltn}
A pair of processes $(u,\psi)$ is a strong solution to equation \eqref{ValueFunction_u} if for all $\tau\in(0,T)$ and $b\in\mathbb R$ with $b>a$, it holds that $(u,\psi)1_{[0,\tau]\times [a,b]}\in\cH^1([0,\tau]\times[a,b])$, and with probability 1, for all $t\in[0,\tau]$,
\begin{align*}
u_t(y)
&=u_{\tau}(y)
+\int_t^{\tau}\bigg[\alpha  D^2u
+\sigma^*D\psi+\beta Du+\lambda
-\frac{|u|^{q^*}}{(q^*-1)|\eta|^{q^*-1}}-\mu(\cZ) u
\\
&\quad\quad
+\int_{\mathcal{Z}}\frac{\gamma_{\cdot}(\cdot,z) u}{(|\gamma_{\cdot}(\cdot,z)|^{q^*-1}+|u |^{q^*-1})^{q-1}}\,\mu(dz)
\bigg](s,y)\,ds
 -\int_t^{\tau}\psi_s(y) dW_s,\quad \text{dy-a.e.},
\end{align*}
with
$$Du _t(a)=0, \text{ for } t\in [0,\tau],\quad \text{ and } \lim_{\tau\rrow T}u_{\tau}(y)=\infty, \quad \text{for all }y\in \cD, \text{ a.s.}$$
\end{defn}
We would note that the zero Neumann boundary condition $Du _t(a)=0$ is holding in the sense that for each $t\in[0,T)$,
$$\lim_{\delta\rightarrow 0^+}\frac{1}{\delta}\int_a^{a+\delta}Du_t(x)\, dx=0,\quad \text{a.s.}$$

The main results below are established under the following assumptions on the coefficients.
\begin{ass}\label{AssHJB}

\begin{enumerate}[$({\mathcal A} 1)$]
	\item (Measurability and boundedness) The function $\gamma: \Omega\times[0,T]\times\bR\times\mathcal{Z}\longrightarrow [0,+\infty]$
is $\sP\times \mathscr{B}(\bR)\times \mathscr{Z}$-measurable, and the functions
   $$\beta,\sigma,\bar{\sigma},\eta,\lambda: \Omega\times[0,T]\times\bR\longrightarrow 
   \bR\times \bR^d\times \bR^{m}\times \bR_+\times \bR_+$$
are $\sP\times \mathscr{B}(\bR)$-measurable and essentially bounded by $\Lambda>0$. 
\item (Lipschitz-continuity)
For $h=\lambda,\eta,\beta,\sigma^i,\bar\sigma^j$, $i=1,\dots,d$, $j=1\dots,m$,  it holds that for all $y_1,y_2\in \bR$ and $(\omega,t)\in \Omega\times [0,T]$,
\begin{align*}
	\left|h_t(y_1)-h_t(y_2)\right|  + \esssup_{z\in \cZ} \left|  \gamma_t(y_1,z)-\gamma_t(y_2,z)    \right|
			\leq \Lambda \, \left|y_1-y_2\right|,
\end{align*}
where $\Lambda$ is the constant in $(\mathcal A 1)$.
\item 
There exist constants $\kappa>0$ and $\kappa_0>0$ such that $\eta_s(y)\geq \kappa_0$ and
$$
\text{(Superparabolicity)}\quad\quad
\sum_{i=1}^m  \left|  \bar{\sigma}^{i}_s(y)\right|^2  \geq \kappa,
\quad\text{a.s.},\quad \forall\,(s,y)\in  [0,T]\times \bR.
$$
\end{enumerate}

\end{ass}

To the best of our knowledge, there is no existing $L^{p}$-theory for Neumann problems of BSPDEs for any $p\in[1,2)\cup(2,\infty)$. At the same time, in order to derive the representation for the composition $u_t(y_t)$,   the solution $u$ to BSPDE \eqref{ValueFunction_u} has to be regular enough to allow for an application of the link between FBSDEs and BSPDEs that is to be established. To guarantee  the regularity, we need to develop the theory of strong solutions to BSPDEs with Neumann boundary conditions, working with a weighted solution. Throughout this paper, the weight function is chosen (not uniquely) to be the following one:
\[
	\theta: \mathbb{R} \to \mathbb{R}, \quad y \mapsto \left(1+|y-a|^2\right)^{-1},
\]
and we may analyze ${\theta}u$ instead of $u$. A direct computation verifies that $(u,\psi)$ is a solution to \eqref{ValueFunction_u} if and only if $(v,\zeta):=({\theta}u,{\theta}\psi)$ solves
\begin{equation}\label{HJBequ_weight}
\left\{
\begin{split}
-dv_t(y)&=\bigg[\alpha D^2v+\sigma^*D\zeta+\lambda\theta
-\frac{|v|^{q^*}}{(q^*-1)|\theta \eta|^{q^*-1}}-\mu(\cZ) v
\\
&\quad\quad
+\int_{\mathcal{Z}}\frac{\theta \gamma(\cdot,z) v}{(|\theta \gamma(\cdot,z)|^{q^*-1}+|v |^{q^*-1})^{q-1}}\,\mu(dz)
+f(t,y,Dv,v,\zeta)\bigg](t,y)\,dt\\
&\quad\quad-\zeta_t(y)\,dW_t,\quad (t,y)\in(0,T)\times\cD,\\
D v_t(a) &=0, \quad \text{for }t\in[0,T),\\
v_T(y)&=\infty,
\end{split}
\right.
\end{equation}
with the linear term
\begin{align*}
&f(t,y,Dv,v,\zeta)\\
&= \left[\beta+4(y-a)\alpha_t(y)\theta\right]Dv
+2\theta\left[\alpha_t(y)+(y-a)\beta\right]v+2(y-a)\theta\sigma^*\zeta\\
&=\left[\beta+4(y-a) \alpha \theta\right]Dv
+2\theta\left[   \alpha     +(y-a)\beta\right]v+2(y-a)\theta\sigma^*\zeta.
\end{align*}

The main results include a link between FBSDEs and BSPDEs in Section 3, the unique existence of strong solution in $\cM^1$ and a comparison principle for Neumann problems of general semilinear BSPDEs under standard Lipschitz conditions in Section 4, and the well-posedness of BSPDE \eqref{ValueFunction_u} which together with the solvability of the optimal liquidation problem \eqref{OCP} is summarized below for the reader's convenience.
\begin{thm}\label{main-results}
Let Assumption \ref{AssHJB} hold. We assert that:
\begin{enumerate}[(i)]
\item (Existence of strong solution).
 The BSPDE \eqref{ValueFunction_u}  admits a strong solution $(u,\psi)$ such that $ (\theta u,\theta\psi)\in \cM^1([0,\tau]\times \cD)$, $D(\theta u) \in \sS^{\infty}_{w,\sF}(0,\tau;L^{\infty}(\cD))$ for $\tau\in[0,T)$, and 
$$
\frac{c_0 }{(T-t)^{q-1}}
	\leq u_t(y) \leq
		\frac{C_0}{(T-t)^{q-1}},\quad\text{a.s. } \forall\,(t,y)\in [0,T)\times \cD,
$$
where the positive constants $c_0>0$ and $ C_0>0$ depend only on $q,\kappa_0, \Lambda, T$ and $\mu(\cZ)$.
\item (Verification theorem).
 For the above strong solution $(u,\psi)$, the random field
\begin{equation*}
    V(t,y,x):=u_t(y)|x|^q,\quad (t,x,y)\in[0,T]\times \bR\times \cD,
\end{equation*}
coincides with the value function of \eqref{value-func}. Moreover, the optimal (feedback) control is given by
\begin{equation*} 
  \left(\xi^{*}_t,\, \rho^*_t(z)\right)= \left(\frac{\left|u_t(y_t)\right|^{q^*-1} x_t}{\left| \eta_t(y_t)\right|^{q^*-1}},\, \frac{\left|u_t(y_t)\right|^{q^*-1}x_{t-}}{\left|\gamma_t(y_t,z)\right|^{q^*-1}+\left|u_t(y_t)\right|^{q^*-1}}   \right),
  \quad \text{for }t\in[0,T).
  \end{equation*}
  \item (Uniqueness)
   If $(\tilde{u},\tilde{\psi})$ is another strong solution of \eqref{ValueFunction_u} satisfying
 $$
(\theta\tilde{u},\theta\tilde{\psi}+\sigma^*D(\theta\tilde{\psi}))\in
  \cH^1([0,t]\times \cD)
  \quad\text{and } 
     \tilde{u},\, D(\theta \tilde{u})\in \sS^{\infty}_{w,\sF}(0,t;L^{\infty}(\cD))
     ,\quad \forall\,t\in(0,T),
 $$
 then a.s. for {all} $t\in [0,T)$,
  $  \tilde{u}_t= u_t $ a.e. on $\cD$.
\end{enumerate}
\end{thm}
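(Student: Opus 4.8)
The plan is to prove the three assertions in turn, treating as black boxes the results of Sections~3--4 (the FBSDE--BSPDE link, and the $\cM^1$-well-posedness and comparison principle for Neumann problems of semilinear BSPDEs) and of the appendix (the maximum principle for quasilinear BSPDEs). \textbf{For (i)}, I would construct $(u,\psi)$ by approximating away the singularity: replacing the datum $u_T=\infty$ in \eqref{ValueFunction_u} by a constant $n\in\bN$ yields a Neumann problem of a semilinear BSPDE covered by the $\cM^1$-theory of Section~4 --- the nonlinearity $-|u|^{q^*}/((q^*-1)|\eta|^{q^*-1})$ is locally Lipschitz and non-increasing in $u$, and the jump integrand is bounded with the required Lipschitz dependence --- giving unique $(u^{(n)},\psi^{(n)})$ with $(\theta u^{(n)},\theta\psi^{(n)})\in\cM^1$, and the comparison principle gives $0\le u^{(n)}\le u^{(n+1)}$, so $u:=\lim_n u^{(n)}$ exists. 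The two-sided estimate $c_0(T-t)^{-(q-1)}\le u_t(y)\le C_0(T-t)^{-(q-1)}$ comes from comparing each $u^{(n)}$ with deterministic, spatially constant super- and sub-solutions of the form $C(T-t)^{-(q-1)}$, the constant $C$ being chosen (large, resp.\ small) using $\kappa_0\le\eta\le\Lambda$, $0\le\lambda\le\Lambda$, $0\le\gamma$, the elementary inequality $\gamma\,(|\gamma|^{q^*-1}+|v|^{q^*-1})^{-(q-1)}\le1$ and $\mu(\cZ)<\infty$ (for the sub-solution one argues first on a short terminal interval and then extends by strict positivity of $u$); the sub-solution in particular forces $\lim_{\tau\uparrow T}u_\tau=\infty$. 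Passing to the limit in $\cM^1([0,\tau]\times\cD)$ for each $\tau<T$ (the bounds being $n$-uniform there) yields a strong solution in the sense of Definition~\ref{defn-soltn}, and the bound $D(\theta u^{(n)})\in\sS^{\infty}_{w,\sF}(0,\tau;L^{\infty}(\cD))$ \emph{uniformly in $n$} --- hence the same for $\theta u$ --- follows from the maximum principle of the appendix applied to the quasilinear BSPDE (with zero Dirichlet data at $y=a$) satisfied by $D(\theta u^{(n)})$, the sup-bounds on $u^{(n)}$ feeding its coefficients.

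\textbf{For (ii)}, set $V(t,x,y)=u_t(y)|x|^q$. For an arbitrary $(\xi,\rho)\in\mathscr{A}$, I would expand $u_s(y^{t,y}_s)\,|x^{t,x,\xi,\rho}_s|^q$ on $[t,T-\eps]$; as $u$ is not regular enough for the It\^o--Kunita--Wentzell formula, this expansion is carried out through the FBSDE--BSPDE link of Section~3, whose hypotheses are met precisely because $D(\theta u)\in\sS^{\infty}_{w,\sF}$. Substituting the BSPDE (the reflection/local-time term vanishing thanks to the Neumann condition $Du_\cdot(a)=0$) and using the pointwise minimizations built into \eqref{ValueFunction_u} --- a Young inequality $ab\le a^q/q+b^{q^*}/q^*$ for the $\xi$-term and the convex per-$z$ minimization for the jump term --- shows the $ds$-drift of $u_s(y_s)|x_s|^q+\int_t^s(\eta_r|\xi_r|^q+\lambda_r|x_r|^q)\,dr+\int_t^s\!\int_\cZ\gamma_r|\rho_r|^q\,\mu(dz)\,dr$ is nonnegative, vanishing exactly at the feedback $(\xi^*,\rho^*)$ of the statement; hence this process is a submartingale, and a martingale for $(\xi^*,\rho^*)$. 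Taking conditional expectations and letting $\eps\downarrow0$, the terminal term $E[u_{T-\eps}(y^{t,y}_{T-\eps})\,|x^{t,x,\xi,\rho}_{T-\eps}|^q\mid\bar\sF_t]\to0$: from $u_{T-\eps}\le C_0\eps^{-(q-1)}$, the constraint $x_T=0$, the structure of \eqref{controlled-states} and $(\xi,\rho)\in\cL^q$, one gets $E[\eps^{-(q-1)}|x_{T-\eps}|^q]\to0$. Hence $J_t(x,y;\xi,\rho)\ge V(t,x,y)$. It remains to verify that $(\xi^*,\rho^*)$ is admissible: substituted into \eqref{controlled-states} it gives a linear SDE for $x^*$ with drift coefficient $|u_s(y_s)|^{q^*-1}|\eta_s(y_s)|^{-(q^*-1)}$, which by the two-sided bound is of order $(T-s)^{-1}$; this drives $x^*_s\to0$ as $s\uparrow T$ and keeps $(\xi^*,\rho^*)\in\cL^q_{\bar\sF}$, so $(\xi^*,\rho^*)\in\mathscr{A}$, and the equality case gives $J_t(x,y;\xi^*,\rho^*)=V(t,x,y)$. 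Thus $V$ coincides with the value function \eqref{value-func} and $(\xi^*,\rho^*)$ is optimal.

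\textbf{For (iii)}, if $(\tilde u,\tilde\psi)$ is another strong solution with the stated regularity --- which is exactly what lets the Section~3 link be run for $\tilde u$ --- then the argument of (ii) applies verbatim with $u$ replaced by $\tilde u$ and shows $\tilde u_t(y)|x|^q$ also coincides with the value function $V_t(x,y)$; since the value function is unique, $\tilde u_t(y)|x|^q=u_t(y)|x|^q$ for all $x$, i.e.\ $\tilde u_t=u_t$ a.e.\ on $\cD$, a.s., for every $t\in[0,T)$. Equivalently, one may compare finite-terminal truncations of $\tilde u$ against the $u^{(n)}$ via the comparison principle of Section~4 and let $n\to\infty$.

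\textbf{Main obstacle.} The hard part will be (ii): making the chain rule for $u_s(y_s)|x_s|^q$ rigorous at the available low regularity, and controlling the singular terminal layer. The former is possible only through the FBSDE--BSPDE identification of Section~3, which in turn requires the boundedness of $D(\theta u)$ --- this is exactly why the solution must be built in $\cM^1$ rather than in the larger spaces of \cite{Bayraktar-Qiu_2017}, and it makes parts (i) and (ii) tightly coupled, with much of the effort in (i) spent on the gradient bound rather than on existence itself. The latter hinges on the \emph{sharp} two-sided blow-up rate from (i) --- it is this calibration that forces $|\xi^*|^q$ to be integrable up to $T$. A further technical point already inside (i) is obtaining that gradient bound with constants independent of the truncation level $n$, for which the quasilinear maximum principle of the appendix must be applied with $n$-free coefficients, delicate because those coefficients themselves involve $u^{(n)}$ and its derivative.
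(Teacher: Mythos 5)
Your overall strategy is the same as the paper's (truncate the terminal datum, use the $\cM^1$ theory and the comparison theorem of Section~4, get the two-sided blow-up rate from spatially constant super/sub-solutions, get the gradient bound from the appendix maximum principle applied to the derivative equation, and prove (ii)--(iii) through the FBSDE--BSPDE link), but two of your justifications would fail as written. First, in (ii) the admissibility of $\xi^*$ does \emph{not} follow from the decay of $x^*$: since $(q-1)(q^*-1)=1$, the bounds \eqref{eq-thm-u} give $|\xi^*_s|\lesssim (T-s)^{\delta-1}$ with $\delta=(c_0/\Lambda)^{q^*-1}$, and $|\xi^*|^q$ is integrable near $T$ only if $\delta>1/q^*$, which is not guaranteed ($c_0$ is small). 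The paper shows $\xi^*\in\cL^q_{\bar\sF}(0,T)$ only \emph{after} the verification inequality: the equality case on $[0,\tau]$ bounds $E\int_0^\tau \eta_s|\xi^*_s|^q\,ds$ by $u_0(y)|x|^q$ uniformly in $\tau$, and one lets $\tau\uparrow T$. Second, your claim that $E[\eps^{-(q-1)}|x_{T-\eps}|^q\mid\bar\sF_t]\to 0$ for an \emph{arbitrary} admissible pair ``from the structure of \eqref{controlled-states} and $(\xi,\rho)\in\cL^q$'' skips the key idea: because of the block-trade term $\int\!\!\int\rho\,\pi(dz,ds)$ there is no pathwise bound of $|x_{T-\eps}|^q$ by $\eps^{q-1}\int_{T-\eps}^T|\xi_s|^q ds$. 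The paper invokes Lemma~\ref{lem-admissible-control} (reduction to controls with monotone state process and the conditional estimate \eqref{est-lem-adm-control}, which ultimately exploits that $x_T=0$ must hold on the positive-probability no-jump event) to obtain \eqref{esti-001}; some such argument is indispensable here.

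Two further points are glossed. In (i), $n$-uniform $\cM^1([0,\tau]\times\cD)$ bounds plus monotone pointwise convergence do not by themselves let you pass to the limit (in particular they give no Cauchy property for $\psi^{(n)}$); the paper's Step~2 of Theorem~\ref{thm-existence} uses the rescaled pair $((\tau-t)v^M,(\tau-t)\zeta^M)$, which has \emph{zero} terminal value at $\tau$, to prove convergence in $\cM^1([0,\tau]\times\cD)$ and identify the limit equation. In (iii), the verification theorem cannot be applied ``verbatim'' to $\tilde u$, because Theorem~\ref{thm-verification} requires the two-sided bound \eqref{eq-thm-u}, which $\tilde u$ is not assumed to satisfy; establishing it is the actual content of the paper's uniqueness proof, via a comparison with $v^M$ localized by the stopping time $\tau^M=\inf\{s>t:\ \tilde v_s(y^{t,y}_s)>K_M\}$ (lower bound; note $\tilde v$ is only bounded away from $T$, so the localization is needed to apply Theorem~\ref{comparison-thm}) and a comparison with the explicit solutions $\Gamma^M_t\theta$ of \eqref{HJBequ_truncate} on $[0,T_0]$, $T_0<T$ (upper bound). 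Your fallback sentence about comparing truncations points in the right direction, but the stopping-time localization and the upper bound are the parts that need to be spelled out.
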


\begin{rmk}
Theorem \ref{main-results} only summarizes the main results given in Section 5 which focus on the well-posedness of BSPDE \eqref{ValueFunction_u} and the resolution of the optimal liquidation problem \eqref{OCP}. In contrast to the other applications of BSPDEs to liquidation problems (see \cite{GraeweHorstQui13,Horst-Qiu-Zhang-14}), the main novelty of Theorem \ref{main-results} lies in the treatments of Neumann boundary condition, the general $q$th-power setting, the refined function space $\cM^1$, and the bounded estimate of gradients.

If all the coefficients $\beta,\sigma,\bar \sigma, \lambda,\eta,\gamma$ are deterministic functions (independent of $\omega\in\Omega$), the optimal control problem is Markovian and the  BSPDE
 \eqref{ValueFunction_u} becomes the parabolic PDE:
\begin{equation*} 
  \left\{\begin{array}{l}
  \begin{aligned}
  -\partial_t u_t(y) &= \bigg[\alpha  D^2u 
+\beta  Du +\lambda  -\frac{|u|^{q^*}}{(q^*-1)|\eta|^{q^*-1}}-\mu(\cZ) u
\\
&\quad \quad
+\int_{\mathcal{Z}}\frac{\gamma_{\cdot}(\cdot,z) u}{(|\gamma_{\cdot}(\cdot,z)|^{q^*-1}+|u |^{q^*-1})^{q-1}}\,\mu(dz)\bigg](t,y),
\quad  (t,y)\in[0,T)\times\cD,\\
Du_t(a)&=0,\quad t\in[0,T).\\
u_T(y)&=\infty, \quad  y\in\bar{\cD}.
    \end{aligned}
  \end{array}\right.
\end{equation*}
We would claim that our results are new even in such Markovian cases.
\end{rmk}

\section{A link between FBSDEs and BSPDEs}

\begin{thm}\label{thm-ito-wentzell}
 Let Assumption \ref{AssHJB} be satisfied and suppose that with probability 1, for each $t\in[0,T]$,
 \begin{align*}
    \Phi_t(y)&=\Phi_T(y)+\int_t^T\! \left[\alpha_t(y)D^2\Phi
+\sigma^*D\Psi+\beta D\Phi+g\right](s,y)\,ds\\
&\quad 
	-\int_t^T\!\Psi_r(y)\,dW_r,\quad  \text{dy-a.e.},
  \end{align*}
with $D\Phi _t(a)=0$, $(\Phi,\Psi)\in \cH^1$, $g\in \sL^2_{\sF}(0,T;H^{1,2}(\cD)) $, and 
   \begin{align}
   D\Phi\in \sS^{\infty}_{w,\sF}(0,T;L^{\infty}(\cO)),
   \quad \text{i.e., }
   \esssup_{\omega\in\Omega} \sup_{t\in[0,T]} \|D\Phi(\omega, t,\cdot)\|_{L^{\infty}(\cD)} <\infty.  \label{eq-bd-dphi}
   \end{align}
   Then, for each $y\in\cD$ and $t\in [0,T]$, it holds almost surely that 
 \begin{align}
      \Phi_t(y)&= \Phi_T(y^{t,y}_T)
     +\!\int_t^T   g_r(y^{t,y}_r)  \,dr   
     -\int_t^T\left( \Psi_r(y^{t,y}_r)    
     		+ D\Phi_r(y^{t,y}_r)\sigma_r(y^{t,y}_r)\right)\,dW_r
     \nonumber\\
     &
     -\int_t^T D\Phi_r(y^{t,y}_r)\bar\sigma_r(y^{t,y}_r)\,dB_r
     .
     \label{eq-ito-kunita}
   \end{align}
\end{thm}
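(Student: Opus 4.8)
The plan is to obtain \eqref{eq-ito-kunita} by spatially mollifying $\Phi$, applying an It\^o-Wentzell (It\^o-Kunita) formula to the smoothed field composed with the reflected factor process $y^{t,y}_{\cdot}$, and then passing to the limit. Fix $t\in[0,T]$ and $y\in\cD$. Since the coefficients of the reflected SDE are bounded and Lipschitz and the reflection acts only at the lower barrier $a$, the solution $y^{t,y}_{\cdot}$ is a non-exploding semimartingale; stopping at $\tau_b:=\inf\{r\ge t:\,y^{t,y}_r\ge b\}\wedge T$, one has $\tau_b\uparrow T$ a.s., so it suffices to prove the identity on $[t,\tau_b]$ and let $b\to\infty$, using the one-dimensional embedding $H^1([a,b])\hookrightarrow C([a,b])$ and dominated convergence, controlled by the global bound \eqref{eq-bd-dphi}. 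To accommodate the Neumann condition I would extend $\Phi_r,\Psi_r,g_r$ and the coefficients \emph{evenly} about $y=a$ and convolve with a symmetric mollifier $\varphi_\eps$ (writing $h^{\eps}:=h*\varphi_\eps$), obtaining smooth fields $(\Phi^\eps,\Psi^\eps)$; the point of the even extension is that $D\Phi^\eps_r$ is \emph{odd} about $a$, whence $D\Phi^\eps_r(a)=0$ for every $r$, while the hypothesis $D\Phi_r(a)=0$ ensures the extended $\Phi_r$ carries no singular second-derivative mass at $a$.

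Applying the classical It\^o-Wentzell formula (or the generalized It\^o-Wentzell formula recalled in the appendix) to $\Phi^\eps_r(y^{t,y}_r)$ --- using $d\langle(y^{t,y})^c\rangle_r=(|\sigma_r|^2+|\bar\sigma_r|^2)(y^{t,y}_r)\,dr=2\alpha_r(y^{t,y}_r)\,dr$ and the fact that the regulator $L$ enters only the finite-variation part --- and inserting the mollified BSPDE for $d_r\Phi^\eps_r$, a chain of cancellations occurs along the trajectory: the It\^o correction $\tfrac12D^2\Phi^\eps\,d\langle(y^{t,y})^c\rangle=\alpha D^2\Phi^\eps\,dr$ absorbs the $\alpha D^2\Phi$ term of the drift, which is exactly where the identity $\alpha=\tfrac12(\sigma^*\sigma+\bar\sigma^*\bar\sigma)$ is used; the It\^o-Wentzell cross-variation term $(D\Psi^\eps\!\cdot\!\sigma)(y^{t,y}_r)\,dr$ absorbs the $\sigma^*D\Psi$ term; the first-order term $\beta D\Phi^\eps\,dr$ absorbs the $\beta D\Phi$ term; and the boundary contributions disappear, since $\int D\Phi^\eps_r(y^{t,y}_r)\,dL_r=\int D\Phi^\eps_r(a)\,dL_r=0$ by the Skorohod condition together with $D\Phi^\eps_r(a)=0$, while any remaining boundary term produced by It\^o-Wentzell is proportional to $D\Phi_r(a)$ and hence vanishes by the Neumann condition. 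What survives is
\begin{align*}
\Phi^\eps_t(y)&=\Phi^\eps_T(y^{t,y}_T)+\int_t^T\!\big(g^\eps_r+\mathcal{R}^\eps_r\big)(y^{t,y}_r)\,dr\\
&\quad-\int_t^T\!\big(\Psi^\eps_r+\sigma_rD\Phi^\eps_r\big)(y^{t,y}_r)\,dW_r-\int_t^T\!\big(\bar\sigma_rD\Phi^\eps_r\big)(y^{t,y}_r)\,dB_r,
\end{align*}
where $\mathcal{R}^\eps$ collects the Friedrichs-type commutators $(h\,v)^\eps-h\,v^\eps$, with $h$ a coefficient and $v\in\{D^2\Phi,D\Psi,D\Phi\}$, together with the boundary-layer remainders from the even extension; since these coefficients are Lipschitz and $D^2\Phi,D\Psi,D\Phi\in L^2$ (with $D\Phi$ in addition bounded), one checks $\mathcal{R}^\eps\to0$ in $\sL^2_{\sF}(0,T;L^2([a,b]))$.

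It remains to pass $\eps\to0$. For the term with $g$ and the two stochastic integrals, the embedding $H^1([a,b])\hookrightarrow C([a,b])$, It\^o's isometry, dominated convergence, and the uniform bound $\sup_\eps\|D\Phi^\eps\|_{\sS^{\infty}_{\sF}(0,T;L^{\infty}(\cD))}\le\|D\Phi\|_{\sS^{\infty}_{\sF}(0,T;L^{\infty}(\cD))}$ do the job. The genuinely delicate term is $\int_t^T\mathcal{R}^\eps_r(y^{t,y}_r)\,dr$, which spatial continuity cannot control because $\mathcal{R}^\eps_r$ is merely $L^2$ in space; here I would invoke a Krylov-type estimate $E\int_t^T|f_r(y^{t,y}_r)|\,dr\le C\,\|f\|_{\sL^2_{\sF}(0,T;L^2([a,b]))}$, equivalently integration against the sub-Gaussian heat kernel of the nondegenerate reflected diffusion, which is available precisely because the superparabolicity $(\mathcal{A}3)$ forces $y^{t,y}_{\cdot}$ to be nondegenerate with an absolutely continuous law; this gives $E\int_t^T|\mathcal{R}^\eps_r(y^{t,y}_r)|\,dr\to0$ and makes each pointwise evaluation such as $D\Phi_r(y^{t,y}_r)$ independent of the chosen a.e.\ representative. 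Collecting the limits yields \eqref{eq-ito-kunita}. I expect the crux to be exactly this low spatial regularity of $\Psi$ and $D^2\Phi$: the two ``extra'' It\^o-Wentzell ingredients $\alpha D^2\Phi$ and $\sigma^*D\Psi$ have no pointwise meaning along $y^{t,y}_{\cdot}$, so their cancellation must be carried out at the smoothed level and only the residual commutators controlled, via the nondegeneracy estimate --- while simultaneously arranging, through the even reflection and the symmetric mollifier, that the Neumann condition survives the smoothing so that the boundary term drops out.
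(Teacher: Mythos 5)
Your overall strategy (regularize in space, apply an It\^o--Wentzell formula to the smoothed field composed with the reflected process, cancel the structural terms, and control commutator remainders in the limit) is in the same spirit as the paper, but your limiting mechanism is genuinely different from the paper's and it is exactly there that your argument has a gap. The paper does not mollify: it extends $(\Phi,\Psi,H)$ to $\bR$ with a Sobolev extension operator, applies Krylov's distributional It\^o--Wentzell formula, which yields the composition identity for Lebesgue-a.e.\ spatial shift $x\ge 0$ of the field, and then averages over $x\in[0,\delta]$ and lets $\delta\to 0^+$. With that device the ``bad'' terms ($\alpha D^2\Phi$, $\sigma^*D\Psi$, $\beta D\Phi$) never have to be evaluated along the trajectory: the commutators become coefficient increments $|h(x+y_r)-h(y_r)|\le \Lambda x\le\Lambda\delta$ multiplying $L^2$ functions, and Fubini plus Cauchy--Schwarz \emph{in the shift variable} gives an $O(\delta^{1/2})$ bound using only the Lipschitz continuity of the coefficients and the $\cH^1$ norms; the local-time term is handled by \eqref{eq-bd-dphi} and dominated convergence (this is the point of Remark \ref{rmk-FBSDE-BSPDE}), and the stochastic integrals by It\^o isometry and Sobolev embedding.

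In your scheme the Friedrichs commutators $\mathcal{R}^\eps_r$ are merely $L^2$ in space, so you must evaluate them along $y^{t,y}_\cdot$, and you propose to do this via a Krylov-type estimate $E\int_t^T|f_r(y^{t,y}_r)|\,dr\le C\|f\|_{\sL^2_{\sF}(0,T;L^2)}$. This is the genuine gap: no such estimate is proved or cited in the paper, and your justification through ``the sub-Gaussian heat kernel of the nondegenerate reflected diffusion'' is not available here, because $\beta,\sigma,\bar\sigma$ are $\sF$-adapted \emph{random} fields, so $y^{t,y}$ is not Markovian and has no transition density; moreover $f=\mathcal{R}^\eps$ is itself adapted, so even a bound on the marginal laws of $y_r$ would not suffice without a conditional version. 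A Krylov estimate for reflected It\^o processes with merely measurable adapted coefficients is a nontrivial result in its own right (the classical proofs must be modified so that the auxiliary test function satisfies a Neumann condition at $a$ to absorb the regulator $L$), and as stated your proof depends on it entirely. If you supply such an estimate, the rest of your argument looks sound --- and your even-reflection trick, which makes $D\Phi^\eps_r(a)=0$ exactly and kills the $dL$ term at the smoothed level, is a nice alternative to the paper's use of \eqref{eq-bd-dphi} for that term --- but without it the passage to the limit of $\int_t^T\mathcal{R}^\eps_r(y^{t,y}_r)\,dr$ is unjustified, which is precisely the difficulty the paper's shift-averaging argument is designed to avoid.
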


\begin{proof}
W.l.o.g., we only need to prove \eqref{eq-ito-kunita} for $t=0$. Setting
\[
H_t(y)=\left[\alpha D^2\Phi
+\sigma^*D\Psi+\beta D\Phi+g\right](t,y),
\]
one has $H\in \sL^2(0,T;L^2(\cD))$. 

The theory of Sobolev spaces allows us to  extend $H^{k,2}(\cD)$ to $H^{k,2}(\bR)$ for integers $k\geq 1$. In particular, when $k=1,2$, the bounded linear extension operator can be constructed (as in \cite[Pages 254-257]{Evans-1998-PDE}) as follows: for each $\zeta\in H^{1,2}(\cD)$ or $\zeta\in H^{2,2}(\cD)$,
\begin{equation*}
\mathcal E \zeta(y)\triangleq
  \left\{\begin{array}{l}
  \begin{aligned}
  &\zeta(y), \quad &&\text{if } y\in [a,\infty);  \\
        & -3\zeta(-y)+4\zeta(-y/2) ,\quad &&\text{if }y\in (-\infty,a].
    \end{aligned}
  \end{array}\right.
\end{equation*}
 $\mathcal E \zeta$ is called an extension of $\zeta$ to $\bR$. 
   Then it is easy to check that with probability 1, for all $t\in[0,T]$,
 \begin{align}
    \mathcal E \Phi_t(y)=\mathcal E\Phi_T(y)+\int_t^T\!  \mathcal E H_r(y)\,dr-\int_t^T\mathcal E \Psi_r(y)\,dW_r,\quad \textrm{for dy-a.e. } y\in\bR. \label{eq-lem-ito-kunita-1}
  \end{align}
  
For each $y\in \cD$, applying the generalized It\^o-Kunita-Wentzell formula of Corollary \ref{Ito-Wentzell-cor} to equation \eqref{eq-lem-ito-kunita-1} yields that with probability 1,
 \begin{align}
   &   \mathcal E \Phi_0(x+y^{0,y}_0)
      -\mathcal E\Phi_T(x+y^{0,y}_T)\nonumber \\
      &
      =\int_0^T\bigg[
      \mathcal E H _t(x+y^{0,y}_t)
      - \alpha(t,y^{0,y}_t)D^2\mathcal E\Phi _t(x+y^{0,y}_t)
      -\sigma^*_t(y^{0,y}_t) D\mathcal E \Psi_t(x+y^{0,y}_t)   \notag \\
&\quad\quad      -\beta_t(y^{0,y}) D\mathcal E\Phi _t(x+y^{0,y}_t)   \bigg]\,dt
-\int_0^T D\mathcal E \Phi_t(x+y^{0,y}_t) \,dL_t
 - \int_0^TD\mathcal E \Phi _t(x+y^{0,y}_t) \bar\sigma_t(y^{0,y}_t)\,dB_t
      \nonumber\\
   &\quad
      -\int_0^T \left[ \mathcal E \Psi_t(x+y^{0,y}_t)+ D\mathcal E \Phi _t(x+y^{0,y}_t)\sigma_t(y^{0,y}_t)\right]\,dW_t      , \quad \text{for dx-a.e. }x\in\bR. \nonumber
 \end{align}
 Notice that for all $x\geq 0$, $y_t^{0,y}+x \geq a$ a.s. for all $t\in[0,T]$. This, together with the definition of function $H$, implies further that with probability 1,
  \begin{align}
   &    \Phi_0(x+y^{0,y}_0)
      -\Phi_T(x+y^{0,y}_T)\nonumber \\
      &
      =I_1(x)+I_2(x)+I_3(x)+\int_0^T\!\!\!\!   g_t(x+y^{0,y}_t)\,dt
      -\int_0^T\!\!\!\! D \Phi_t(x+a) \,dL_t
   - \int_0^T\!\!\!\!  D  \Phi _t(x+y^{0,y}_t) \bar\sigma_t(y^{0,y}_t)\,dB_t
       \nonumber\\
                &\quad
   -\int_0^T\left[  \Psi_t(x+y^{0,y}_t)+ D  \Phi_t(x+y^{0,y}_t) \sigma_t(y^{0,y}_t) \right]\,dW_t
      , \quad \text{for dx-a.e. }x\in[0,\infty), \label{eq-ito-kunita-1}
 \end{align}
 where we note that $dL_t=\textbf{1}_{\{y^{0,y}_t=a\}}\,dL_t$, and
 \begin{align*}
 I_1(x)&=\int_0^T  \left[\alpha(t,x+y^{0,y}_t)- \alpha(t,y^{0,y}_t) \right]D^2  \Phi _t(x+y^{0,y}_t)\, dt,
 		\\
I_2(x)&= \int_0^T       \left[ \sigma^*_t(x+y^{0,y}_t) -\sigma^*_t(y^{0,y}_t) \right] D  \Psi_t(x+y^{0,y}_t) \,dt,
		\\
I_3(x)&=  \int_0^T      \left[ \beta_t(x+y^{0,y}_t) -\beta_t(y^{0,y}_t) \right] D  \Phi_t(x+y^{0,y}_t) \,dt.
 \end{align*}
 
 In particular, for each $\delta>0$ it holds that with probability 1,
   \begin{align*}
   &    \frac{1}{\delta}\int_0^{\delta}\left(\Phi_0(x+y^{0,y}_0)
      -\Phi_T(x+y^{0,y}_T) \right) \,dx\nonumber \\
      &
      =\frac{1}{\delta}\int_0^{\delta} \left(I_1+I_2+I_3\right)(x)\,dx
      +\int_0^T \frac{1}{\delta}\int_0^{\delta} g_t(x+y^{0,y}_t)\,dx\,dt
      -\int_0^T \frac{1}{\delta}\int_0^{\delta}  D \Phi_t(x+a) \,dx\,dL_t
     \\
     &\quad
      -\int_0^T \frac{1}{\delta}\int_0^{\delta}   \left[  \Psi_t(x+y^{0,y}_t)+ D  \Phi_t(x+y^{0,y}_t) \sigma_t(y^{0,y}_t) \right]         \,dx\,dW_t
         - \int_0^T \frac{1}{\delta}\int_0^{\delta} D  \Phi _t(x+y^{0,y}_t) \bar\sigma_t(y^{0,y}_t)\,dx\,dB_t,
 \end{align*}
 where we have used the Fubini's theorem. By Sobolev's embedding theorem,  the space $H^{m,2}(\cD)$ is continuously embedded into continuous function space $C^{m-1,\frac{1}{4}}(\cD)$ for $m\geq 1$. Thus, to obtain the desired relation \eqref{eq-ito-kunita}, we may let $\delta\rightarrow 0^+$. Notice that with probability 1,
 \begin{align*}
&\left|  \frac{1}{\delta}\int_0^{\delta} I_2(x)\,dx   \right|   \\
&=\left| \int_0^T   \frac{1}{\delta}\int_0^{\delta}    \left[ \sigma^*_t(x+y^{0,y}_t) -\sigma^*_t(y^{0,y}_t) \right] D  \Psi_t(x+y^{0,y}_t) \, dx dt   \right|
	\\
&\leq
 \int_0^T\int_0^{\delta}    \frac{\Lambda x}{\delta}   \left| D\Psi_t(x+y^{0,y}_t) \right| \,dxdt
 	\\
&\leq
 \int_0^T \left(\int_0^{\delta}    \frac{\Lambda^2 x^2}{\delta^2}      dx\right)^{1/2}   \left(\int_0^{\delta}\left| D\Psi_t(x+y^{0,y}_t) \right|^2 \,dx\right)^{1/2} dt
 	\\
 &\leq
 \frac{ \delta^{1/2}\Lambda }{\sqrt{3}} \cdot \sqrt{T}    \left(  \int_0^T\int_a^{\infty} \left| D\Psi_t(x) \right|^2 \,dxdt \right)^{1/2}
 	\\
&\rightarrow 0,\quad \text{as }\delta \rightarrow 0^+,
 \end{align*}
 and we have the similar calculations for $I_1$ and $I_3$. It remains to check the terms of stochastic integrals as all the other terms follow straightforwardly from the dominated convergence theorem. Indeed, the Ito isometry and the embedding theorems yield
 \begin{align*}
&E\bigg[ \Big|\int_0^T \frac{1}{\delta}\int_0^{\delta} D  \Phi _t(x+y^{0,y}_t) \bar\sigma_t(y^{0,y}_t) \,dx\,dB_t 
- \int_0^T  D  \Phi _t(y^{0,y}_t) \bar\sigma _t(y^{0,y}_t) \,dB_t\Big|^2\bigg]
\\
&=
E\bigg[ \int_0^T \Big| \frac{1}{\delta}\int_0^{\delta} \left( D  \Phi _t(x+y^{0,y}_t) \bar\sigma_t(y^{0,y}_t) 
											 -   D  \Phi _t(y^{0,y}_t) \bar\sigma _t(y^{0,y}_t)
											 		   \right)\,dx\Big|^2\,dt 
\bigg]
\\
&\leq
E\bigg[ \int_0^T \frac{1}{\delta}\int_0^{\delta} \left| D  \Phi _t(x+y^{0,y}_t) \bar\sigma_t(y^{0,y}_t) 
											 -D  \Phi _t(y^{0,y}_t) \bar\sigma _t(y^{0,y}_t)
											 			\right|^2\,dx\,dt 
\bigg]
\\
&\leq 
C \| D \Phi \|_{\sL^2_{\sF}(0,T; H^{1,2}(\cD))} ^2 \frac{1}{\delta}\int_0^{\delta}|x|^{1/2} dx
\\
&\leq
C \| D  \Phi \|_{\sL^2_{\sF}(0,T; H^{1,2}(\cD))} ^2 \cdot \frac{2\delta^{1/2}}{3}
\\
&\rightarrow 0, \quad \text{as }\delta \rightarrow 0^+,
 \end{align*}
 and the other term of the stochastic integral follows in a similar way. This completes the proof.
\end{proof}

\begin{rmk}\label{rmk-FBSDE-BSPDE}
The gradient boundedness \eqref{eq-bd-dphi} plays an important role when we use the dominated convergence theorem to reach the limit:
\begin{align}
\lim_{\delta\rightarrow 0^+} 
\int_0^T \frac{1}{\delta}\int_0^{\delta}  D \Phi_t(x+a) \,dx\,dL_t
=\int_0^T D \Phi_t(a)\,dL_t
=0.\label{eq-lim-L}
\end{align}
Otherwise, $\int_0^T \frac{1}{\delta}\int_0^{\delta}  D \Phi_t(x+a) \,dx\,dL_t$ is not dominated, and as the regularity 
$$\Phi \in \sS^2_{\sF}(0,T;H^{1,2}(\cD)) \cap\sL^2_{\sF}(0,T;H^{2,2}(\cD)) $$
 only makes sense of $\mathcal K (x):=\int_0^T  D \Phi_t(x+a) \,dL_t$ with $\mathcal K \in L^1(\Omega; L^2(\cD))$, it would be difficult to identify the limit \eqref{eq-lim-L} or make sense of \eqref{eq-ito-kunita} at the specific point $x=0$.
\end{rmk}

\section{ Neumann problem of semilinear BSPDEs and a comparison theorem}

 In this section, we shall prove the existence and uniqueness of solutions in space $\cM^1$ for Neumann problems of general semilinear BSPDEs, and a comparison theorem is also established under the boundedness assumption on gradients of solutions. 
 
 Consider the following Neumann problem of the semilinear BSPDE  with Lipschitz continuous coefficients and finite terminal value:
\begin{equation}\label{Lip_bspde}
\left\{
  \begin{split}
    -dv_t(y)&=\bigl[\alpha D^2v+\sigma^* D\zeta+F(t,y,Dv,v,\zeta)\bigr]\,dt -\zeta_t(y)\,dW_t,  \quad   (t,y)\in(0,T)\times\cD,\\
    Dv_t(a)&=0,  \quad t\in[0,T],\\
    v_T(y)&=G(y), \quad \forall y\in\bar{\cD}.
  \end{split}
  \right.
\end{equation}

\begin{ass}\label{AssLip}
\begin{enumerate}[(1)]
\item
 (Lipschitz continuity) $F: \Omega \times [0,T]\times \cD \times \bR \times \bR \times \bR^d\rrow \bR$ is $\sP\times \sB(\cD)\times \sB(\bR)\times \sB(\bR)\times \sB(\bR^d)\rrow\sB(\bR)$ measurable, $F_0(t,y):=F(t,y,0,0,0)\in \sM^2_{\sF}(0,T;L^2(\cD))$, and there is a constant $K>0$ such that for any $(p,q,r),(\bar{p},\bar{q},\bar{r})\in\bR\times\bR\times\bR^d$, it holds that
$$|F(t,y,p,q,r)-F(t,y,\bar{p},\bar{q},\bar{r})|\leq K (|p-\bar{p}|+|q-\bar{q}|+|r-\bar{r}|),\quad (t,y)\in[0,T]\times \cD.$$
\item $G\in L^{\infty}(\Omega,\sF_T;H^1(\cD))$ with $DG(a):=\lim_{\delta\rightarrow 0^+} \frac{1}{\delta}\int_a^{a+\delta}DG(x)\, dx=0$ a.s.
\end{enumerate}
\end{ass}

\begin{defn}
A pair of processes $(v,\zeta)$ is a strong solution to BSPDE \eqref{Lip_bspde} if  $(v,\zeta)\in\cH^1$ satisfying a.s. for all $t\in[0,T]$,
$$v_t(y)=G(y)+\int_t^T\bigl[\alpha D^2v+\sigma^* D\zeta+F(s,y,Dv,v,\zeta)\bigr](s,y)\,ds -\int_t^T\zeta_t(y)\,dW_s,  \quad \text{for dy-a.e. }y\in \cD,$$
and $Dv_t(a)=0$.
\end{defn}

Under the above assumptions, we can prove the following a priori estimate for the strong solutions of BSPDE \eqref{Lip_bspde}.
\begin{prop}\label{priori_est}
Let Assumptions \ref{AssHJB} and \ref{AssLip} hold. If $(v,\zeta)\in\cH^1$ is a strong solution to BSPDE \eqref{Lip_bspde}, then the strong solution is unique, and we have $(v,\,\zeta)\in\cM^1$ with 
$$\|(v,\zeta)\|_{\cM^1}\leq C\Big(\|G\|_{L^{\infty}(\Omega; H^1(\cD))}+\|F_0\|_{\sM^2_{\sF}(0,T;L^2(\cD))}\Big),$$
where the constant $C$ depends only on $\kappa, K,T$ and $\Lambda$.
\end{prop}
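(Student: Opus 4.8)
The plan is to establish the $\cM^1$-estimate by working with the weighted pair and applying It\^o's formula for the square of the $L^2$- and $H^1$-norms, exploiting the superparabolicity to absorb the second-order terms, and then iterating backward over short time intervals to convert the $\sL^2$-type bound into the conditional-expectation bound defining $\sM^2$. First I would note that uniqueness follows from the estimate itself: if $(v_1,\zeta_1)$ and $(v_2,\zeta_2)$ are two strong solutions in $\cH^1$, their difference solves \eqref{Lip_bspde} with $G=0$ and with a driver that is Lipschitz in $(Dv,v,\zeta)$ and vanishes at the origin (in the sense that $F_0\equiv 0$ for the difference equation), so the a priori estimate forces $\|(v_1-v_2,\zeta_1-\zeta_2)\|_{\cM^1}=0$. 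Hence it suffices to prove the stated bound for an arbitrary strong solution.

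The core computation is the energy estimate. On any truncated cylinder $[0,\tau]\times[a,b]$ the pair lies in $\cH^1([0,\tau]\times[a,b])$, so one may apply the It\^o-Wentzell formula (the version recalled in the appendix) to $\|v_t\|^2$ and to $\|Dv_t\|^2$. For $\|v_t\|^2$, integration by parts turns $\langle v,\alpha D^2 v\rangle$ into $-\langle Dv,\alpha Dv\rangle - \langle Dv, (D\alpha)v\rangle$ plus a boundary term at $y=a$; the Neumann condition $Dv_t(a)=0$ kills the boundary term, superparabolicity ($\sum_i|\bar\sigma^i|^2\ge\kappa$) gives a coercive contribution, and the cross term $\langle v,\sigma^* D\zeta\rangle$ together with $\|\zeta\|^2$ (coming from the $dW$-martingale part, via It\^o) is handled by the classical cancellation $2\langle v,\sigma^* D\zeta\rangle + \|\zeta\|^2 \ge -C\|\zeta\|^2 + \text{(good terms)}$ — more precisely, one keeps a fraction of $\|\zeta\|^2$ on the good side after a Cauchy-Schwarz split, the remainder being absorbed by the $\|Dv\|^2$ coercivity. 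The Lipschitz bound on $F$ and the essential boundedness of the coefficients by $\Lambda$ then reduce everything, after a Gronwall step, to $\|G\|_{H^1}$ and $\|F_0\|_{L^2}$. Handling $\|Dv_t\|^2$ is analogous but requires differentiating the equation in $y$; here the Neumann boundary condition must be used carefully — differentiating $Dv_t(a)=0$ does not directly give information on $D^2v$ at $a$, so one instead integrates by parts in $\langle Dv, \alpha D^3 v\rangle$ keeping the boundary term $[\alpha Dv\, D^2v]_{y=a}$, which vanishes because $Dv_t(a)=0$. After sending $b\to\infty$ (monotone convergence, using that the estimates are uniform in $b$) one obtains the $\sS^2_{\sF}(0,\tau;H^1)\cap\sL^2_{\sF}(0,\tau;H^2)$ bound plus the $\sL^2$ bound on $\zeta$ and $D\zeta$, with constants depending only on $\kappa,K,T,\Lambda$.

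To upgrade from $\sL^2$/$\sS^2$ to the $\sM^2$- and $\sS^\infty$-norms one runs the same energy estimate \emph{conditionally}: fix $r\in[0,T]$ and apply the It\^o-Wentzell formula on $[r,T]$, taking $E[\,\cdot\mid\sF_r]$ of the result. Because $G\in L^\infty(\Omega,\sF_T;H^1)$ and $F_0\in\sM^2_{\sF}$, the right-hand side is bounded by $\|G\|_{L^\infty(\Omega;H^1)}^2 + \|F_0\|_{\sM^2}^2$ plus a term $\sup_{s\ge r}E[\int_s^T(\cdots)\,d\tau\mid\sF_s]$ coming from the Lipschitz driver; taking $\esssup_{\omega}\sup_{r}$ and choosing the time step short enough (of length depending only on $\kappa,K,T,\Lambda$) lets one absorb that last term, and a finite backward induction over $[T-\delta,T],[T-2\delta,T-\delta],\dots$ closes the estimate on all of $[0,T]$. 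This yields simultaneously $\|v\|_{\sS^\infty_{\sF}(0,T;H^1)}$, $\|v\|_{\sM^2_{\sF}(0,T;H^2)}$ and $\|\zeta\|_{\sM^2_{\sF}(0,T;H^1)}$, i.e. $(v,\zeta)\in\cM^1$ with the claimed bound.

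The main obstacle I expect is the boundary term bookkeeping when estimating the $H^1$-norm: one is effectively doing an $H^2$ a priori estimate for a reflected/Neumann problem, and every integration by parts produces a boundary contribution at $y=a$ that must be shown to vanish or to have a favourable sign using only $Dv_t(a)=0$ (interpreted in the averaged sense of the definition), not any control on $D^2v$ at the boundary. A secondary technical point is justifying the It\^o-Wentzell applications and the Fubini/conditional-expectation manipulations at the level of regularity $\cH^1$ on the unbounded domain $\cD$, which is why the truncation to $[a,b]$ and the passage $b\to\infty$ are needed; the weight $\theta$ plays no essential role in \eqref{Lip_bspde} itself since the terminal value is finite, so unlike the singular BSPDE \eqref{ValueFunction_u} one can work directly with $(v,\zeta)$ here.
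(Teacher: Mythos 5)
Your overall skeleton matches the paper's: an energy estimate for $\|v_t\|^2$, then differentiation of the equation in $y$ to estimate $\|Dv_t\|^2$ (where the Neumann condition for $v$ becomes a zero Dirichlet condition for $Dv$, killing the boundary terms), conditional expectations plus a supremum in time to reach the $\cM^1$-norms, and uniqueness read off from the estimate applied to a difference of solutions. However, there is a genuine gap in your Step 1, precisely at the term $2\int\langle v_s,\sigma^*D\zeta_s\rangle\,ds$. The ``classical cancellation'' you invoke requires integrating by parts to move the derivative off $\zeta$, and on the half-line $\cD=[a,\infty)$ this produces the boundary contribution $v_t(a)\sigma^*_t(a)\zeta_t(a)$, which the Neumann condition $Dv_t(a)=0$ does \emph{not} annihilate; controlling this trace term reintroduces $\|D\zeta\|$ (via the one-dimensional trace inequality), so it cannot be closed at the $L^2$ level. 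Moreover, your statement that the remainder is ``absorbed by the $\|Dv\|^2$ coercivity'' is incorrect: a leftover of size $\epsilon\|D\zeta\|^2$ (or a trace of $\zeta$) has no counterpart among the good terms produced by testing with $v$, since at that stage only $\|\zeta\|^2$, not $\|D\zeta\|^2$, appears with a favourable sign.

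This is exactly why the paper does not integrate by parts on that term: it bounds $2\langle v,\sigma^*D\zeta\rangle\le \frac{\Lambda^2}{\epsilon_2}\|v\|^2+\epsilon_2\|D\zeta\|^2$ and carries the $\epsilon_2\|D\zeta\|^2$ term along, closing it only after coupling with the $H^1$-level estimate, where $\|D\zeta\|^2=\|\zeta'\|^2$ appears with a good sign (here the cancellation $-2\langle Dv',\sigma^*\zeta'\rangle\le(1+\epsilon_5)\|\sigma Dv'\|^2+\frac{1}{1+\epsilon_5}\|\zeta'\|^2$ is legitimate because $v'(a)=0$), and then chooses $\epsilon_2$ small relative to the constants of that second estimate. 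So the $L^2$ and $H^1$ estimates cannot be closed separately and ``analogously'' as you describe; they must be closed jointly. Two smaller points: your truncation to $[0,\tau]\times[a,b]$ is unnecessary (the solution is globally in $\cH^1$ on $\cD$, so the It\^o formula for square norms applies directly) and in fact harmful as stated, since it creates uncontrolled boundary terms at $y=b$; and your short-time backward induction to absorb the lower-order $\int\|v_s\|^2\,ds$ term inside the conditional expectations is a legitimate, if more explicit, substitute for the Gronwall-type absorption the paper leaves implicit.
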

\begin{proof}
\textbf{Step 1:}
Applying the generalized It\^o's Lemma for square norms gives
\begin{equation}\label{inequ_1}
\begin{split}
\|v_t\|^2+\int_t^T\|\zeta_s\|^2\,ds=&\|G\|^2+2\int_t^T\langle v_s, \alpha_s D^2v_s\rangle\,ds+2\int_t^T\langle v_s,\sigma^*_s D\zeta_s\rangle\,ds\\+&2\int_t^T\langle v_s,F(s,y,Dv,v,\zeta)\rangle\,ds-2\int_t^T\langle v_s,\zeta_sdW_s\rangle.
\end{split}
\end{equation}
In view of the zero Neumann boundary condition, we have
\begin{equation*}
\begin{split}
2\int_t^T\langle v_s, \alpha_s D^2v_s\rangle\,ds&=-2\int_t^T\langle \alpha_s Dv_s,Dv_s\rangle\,ds-2\int_t^T\langle D\alpha_s  v_s,Dv_s\rangle\,ds\\
\leq &-2\kappa \int_t^T\|Dv_s\|^2\,ds+\frac{\Lambda^2}{\epsilon_1}\int_t^T\|v_s\|^2\,ds+\epsilon_1\int_t^T\|Dv_s\|^2\,ds.
\end{split}
\end{equation*}
Further,
$$2\int_t^T\langle v_s,\sigma_s^*D\zeta_s\rangle\,ds\leq \frac{\Lambda^2}{\epsilon_2}\int_t^T\|v_s\|^2\,ds+\epsilon_2\int_t^T\|D\zeta_s\|^2\,ds \quad \text{with }\epsilon_2\in (0,1],$$
and 
\begin{equation*}
\begin{split}
2\int_t^T\langle v_s,F(t,\cdot,Dv,v,\zeta)\rangle\,ds&\leq \Big[1+K^2\left(1+\frac{1}{\epsilon_3}+\frac{1}{\epsilon_4}\right)\Big]\int_t^T\|v_s\|^2\,ds+\epsilon_3\int_t^T\|Dv_s\|^2\,ds\\
&+\epsilon_4\int_t^T\|\zeta_s\|^2\,ds+\int_t^T\|F_0(s,\cdot)\|^2\,ds. 
\end{split}
\end{equation*}
Taking conditional expectations on both sides of \eqref{inequ_1} and choosing appropriate $\epsilon_1, \epsilon_3$ and $\epsilon_4$ such that $2\kappa-\epsilon_1-\epsilon_3\geq\kappa$ and $\epsilon_4\leq \frac{1}{2}$, we have
\begin{equation}
\begin{split}
&\|v_t\|^2+E\left[\int_t^T\!\!\|Dv_s\|^2ds+\int_t^T\!\!\|\zeta_s\|^2ds \Big| \sF_t    \right]\\
\leq C& E  \left[\|G\|^2+\int_t^T\!\!\|F_0(s,\cdot)\|^2ds+ \frac{\Lambda^2}{\epsilon_2}\int_t^T\|v_s\|^2\,ds+\epsilon_2\int_t^T\!\!\|D\zeta_s\|^2ds  \Big| \sF_t   \right], \label{est-t-e}
\end{split}
\end{equation}
where we have used the fact that the Burkholder-Davis-Gundy inequality yields
\begin{align*}
E\left[\sup_{\tau\in[t,T]}\left|\int_{\tau}^T\langle v_s,\zeta_sdW_s\rangle\right|\right] 
&\leq C E\left(\int_t^T\|v_s\|^2\|\zeta_s\|^2ds\right)^{\frac{1}{2}}
\\
&\leq C E\left[\sup_{\tau\in[t,T]}\|v_{\tau}\|^2+\int_t^T\|\zeta_s\|^2\,ds\right] <\infty,
\end{align*}
and thus,
$$
E\left[
2\int_t^T\langle v_s,\zeta_sdW_s\rangle   \Big|\sF_t
\right]=0.
$$

Taking the supremum with respect to the time variable on both sides of \eqref{inequ_1} and choosing appropriate $\epsilon_i (i=1,3,4)$, we arrive at 
\begin{equation}\label{apriori-step1}
\begin{split}
&\sup_{s\in[t,T]}\|v_s\|^2+ \sup_{\tau\in[t,T]} E\left[ \int_{\tau}^T[\|v_s\|^2+\|Dv_s\|^2]ds+\int_{\tau}^T\|\zeta_s\|^2ds \Big| \sF_{\tau}\right]\\
\leq C& \sup_{\tau\in [t,T]}E\left[\|G\|^2+\int_{\tau}^T\!\!\|F_0(s,\cdot)\|^2ds +  \frac{\Lambda^2}{\epsilon_2}\int_{\tau}^T\|v_s\|^2\,ds +\epsilon_2\int_{\tau}^T\!\!\|D\zeta_s\|^2ds 
			\Big| \sF_{\tau}    \right], \quad \text{a.s.},
\end{split}
\end{equation}
with the constant $C$ independent of $\epsilon_2$.

\noindent \textbf{Step 2:}   Take the derivative in $y$ on both sides of BSPDE \eqref{Lip_bspde} and write $v^{\prime}_t(y):=Dv_t(y)$ and $\zeta^{\prime}_t(y):=D\zeta_t(y)$. Then the pair $(v^{\prime},\zeta^{\prime})$ satisfies the following BSPDE with the zero Dirichlet boundary condition,
 \begin{equation*}
\left\{
  \begin{split}
    -dv^{\prime}_t(y)&=\bigl[D(\alpha Dv^{\prime}) +D(\sigma^*\zeta^{\prime})+D\left( F(t,y,v^{\prime},v,\zeta)\right) \bigr](t,y)\,dt -\zeta^{\prime}_t(y)\,dW_t,\\
   v^{\prime}_t(a)&=0,\\
    v^{\prime}_T(y)&=DG(y), \quad \forall y\in\bar{\cD}.
  \end{split}
  \right.
\end{equation*}


Applying the generalized It\^o's Lemma to the square norm and the integration-by-parts formula, we obtain
\begin{equation*}
\begin{split}
&\|v^{\prime}_t\|^2+\int_t^T\|\zeta^{\prime}_s\|^2\,ds + 2\int_t^T\langle v'_s,\zeta'_sdW_s\rangle-\|DG\|^2 \\
&=-2\int_t^T\langle D v^{\prime}_s, \alpha_s Dv^{\prime}_s\rangle\,ds-2\int_t^T\langle Dv^{\prime}_s,\sigma^*_s \zeta^{\prime}_s\rangle\,ds
	-2\int_t^T\langle Dv^{\prime}_s,{F}(t,y, v^{\prime},  v,\zeta)\rangle\,ds
		\\
&
\leq
-2\int_t^T\langle D v^{\prime}_s, \alpha_s Dv^{\prime}_s\rangle\,ds+(1+\epsilon_5)\int_t^T \|\sigma_s Dv_s^{\prime} \|^2 \,ds+  \frac{1}{1+\epsilon_5}\int_t^T\|  \zeta^{\prime}_s\|^2\,ds
\\
&\quad
	+ \epsilon_6\int_t^T \|Dv^{\prime}_s\|^2 \,ds + C(\epsilon_6,K) \int_t^T \left(\|F_0(s,\cdot)\|^2 + \|v^{\prime}_s\|^2 + \|v_s\|^2 + \|\zeta_s\|^2\right) \,ds,
\end{split}
\end{equation*}
which together with the estimate
\begin{align*}
2E\left[\sup_{\tau\in[t,T]}\left|\int_{\tau}^T\langle v^{\prime}_s,\zeta^{\prime}_sdW_s\rangle\right|\right] 
&\leq C E\left(\int_t^T\|v^{\prime}_s\|^2\|\zeta^{\prime}_s\|^2ds\right)^{\frac{1}{2}}
\\
&\leq  E\sup_{\tau\in[t,T]}\|v'_{\tau}\|^2+{C}E\int_t^T\|\zeta'_s\|^2\,ds   
	\\
&< \infty,
\end{align*}
implies that (by taking conditional expectations of both sides)
\begin{align*}
& \|v^{\prime}_\tau\|^2+  
	E\left[  \int_{\tau}^T\|\zeta^{\prime}_s\|^2\,ds -\|DG\|^2 \Big|\sF_\tau \right] \\
&\leq 
-2 
E\bigg[
	 \int_{\tau}^T\langle D v^{\prime}_s, \alpha_s Dv^{\prime}_s\rangle\,ds \Big|\sF_\tau\bigg]
	+E\bigg[ (1+\epsilon_5) \int_{\tau}^T \|\sigma_s Dv_s^{\prime} \|^2 \,ds+ 
 	\frac{1}{1+\epsilon_5} \int_{\tau}^T\|  \zeta^{\prime}_s\|^2\,ds
\\
	&\quad
	+ \epsilon_6 \int_{\tau}^T \|Dv^{\prime}_s\|^2 \,ds 
		+ C(\epsilon_6,K) \int_{\tau}^T \left(\|F_0(s,\cdot)\|^2 + \|v^{\prime}_s\|^2 + \|v_s\|^2 + \|\zeta_s\|^2\right) \,ds
	\Big|  {\sF_{\tau}}\bigg]
\\
&\text{(by relation \eqref{apriori-step1} and Assumption \ref{AssHJB})}
\\
&\leq 
-(\kappa-\eps_5 \Lambda^2) E \left[ \int_\tau^T\| D v^{\prime}_s \|^2 \,ds  \Big| \sF_{\tau}\right] 
	+  \frac{1}{1+\epsilon_5} E \left[ \int_\tau^T\|  \zeta^{\prime}_s\|^2\,ds \Big| \sF_\tau \right]
	+ \epsilon_6 E\left[ \int_\tau^T \|Dv^{\prime}_s\|^2 \,ds \Big| \sF_\tau\right] \\
&\quad
+ C(\epsilon_6,K,\Lambda,\kappa)
	E\left[\|G\|^2+\int_\tau^T \left( \|F_0(s,\cdot)\|^2 +  \frac{\Lambda^2}{\epsilon_2}\|v_s\|^2+\epsilon_2 \|\zeta^{\prime}_s\|^2 \right) \, ds \Big|\sF_\tau  \right], \quad \text{a.s. for }\tau\in[0,T].
\end{align*}
Letting 
$$ \epsilon_5=\frac{\kappa}{4\Lambda^2}, \quad \epsilon_6=\frac{\kappa}{4}, \quad\text{and }
\epsilon_2 < \frac{\epsilon_5}{2(1+\epsilon_5) C(\epsilon_6,K,\Lambda,\kappa)},
$$
 and taking the supremum with respect to the time variable, we have 
\begin{align*}
&\sup_{s\in[t,T]}\|v^{\prime}_s\|^2+ \sup_{\tau\in[t,T]} E\left[  \int_\tau^T\left( \|D v^{\prime}_s\|^2 + \|\zeta^{\prime}_s\|^2\right)\,ds  \Big|\sF_\tau \right] 
\\
&\leq C \sup_{\tau\in[t,T]}E \left[ \|G\|_{H^{1,2}(\cD)}^2 
+\int_\tau^T \left( \|F_0(s,\cdot)\|^2 +  \|v_s\|^2 \right) \, ds
\Big| \sF_\tau \right],\text{ a.s. for all }t\in[0,T].
\end{align*}
This together with relation \eqref{apriori-step1} finally implies $(v,\zeta)\in \cM^1$ with the desired estimate:
\begin{align}
&\esssup_{\omega\in\Omega} \sup_{s\in[0,T]}\|v_s\|_{H^{1}(\cD)}^2
+\esssup_{\omega\in\Omega} \sup_{\tau\in[0,T]} E\left[ \int_\tau^T\left( \|v_s\|_{H^{2}(\cD)}^2  + \|\zeta_s\|_{H^{1}(\cD)}^2\right)\,ds \Big| \sF_\tau  \right] 
\nonumber\\
&
\leq C\esssup_{\omega\in\Omega}  \|G\|_{H^{1}(\cD)}^2 
+
\esssup_{\omega\in\Omega} \sup_{\tau\in[0,T]} E \left[ \int_\tau^T \|F_0(s,\cdot)\|^2  \, ds\Big|\sF_\tau
\right]. \label{est-apriori-Q}
\end{align}

\noindent \textbf{Step 3:} 
The uniqueness follows as a consequence of the associated estimate with similar arguments. Indeed, for any two strong solutions $(\hat v,\, \hat \zeta)$ and $(\bar v,\,\bar \zeta)$, we may apply the Lipschitz continuity \ref{AssLip} (1) and the computations in Steps 1 and 2 to the difference $(\hat v-\bar v,\,\hat \zeta-\bar \zeta)$, and reach a similar estimate \eqref{est-apriori-Q} with $(G,\,F_0)$ replaced by $(0,0)$, which then yields the uniqueness.
 \end{proof}


\begin{thm}\label{exi_uni_Lip}
Let Assumptions \ref{AssHJB} and \ref{AssLip} hold. The BSPDE \eqref{Lip_bspde} admits a unique strong solution $(v,\zeta)$ in $\cM^1$, and 
$$\|(v,\zeta)\|_{\cM^1}\leq C\Big(\|G\|_{L^{\infty}(\Omega; H^1(\cD))}+\|F_0\|_{\sM^2_{\sF}(0,T;L^2(\cD))}\Big),$$
with the constant $C$ depending on $\kappa, \Lambda, T$ and $K$.
\end{thm}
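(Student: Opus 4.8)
# Proof Proposal for Theorem 4.4 (Existence and Uniqueness in $\cM^1$ for the Semilinear Neumann BSPDE)

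The plan is to establish existence via a continuation (method of continuity) argument, using the a priori estimate of Proposition \ref{priori_est} as the engine that keeps solutions from escaping $\cM^1$ along the homotopy. Uniqueness is already contained in Proposition \ref{priori_est} (Step 3), so the only real work is existence, and the only delicate point is obtaining one solvable base case in the refined space $\cM^1$ rather than merely in $\cH^1$.

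First I would fix the functional setup. For $t\in[0,T]$ and $\theta\in[0,1]$, consider the family of BSPDEs
\begin{equation*}
\left\{
\begin{split}
-dv_t(y)&=\bigl[\alpha D^2v+\theta\,\sigma^*D\zeta+\theta\,F(t,y,Dv,v,\zeta)+f_t(y)\bigr]\,dt-\zeta_t(y)\,dW_t,\\
Dv_t(a)&=0,\qquad v_T(y)=G(y),
\end{split}
\right.
\end{equation*}
where $f\in\sM^2_{\sF}(0,T;L^2(\cD))$ is an arbitrary inhomogeneity. Let $\cS\subseteq[0,1]$ be the set of $\theta$ for which, for every such $(G,f)$ with $G\in L^\infty(\Omega,\sF_T;H^1(\cD))$, the system has a (unique) strong solution in $\cM^1$. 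The a priori bound of Proposition \ref{priori_est} — whose constant depends only on $\kappa, K, T, \Lambda$ and not on $\theta\in[0,1]$, since adding $\theta$ in front of $\sigma^*$ and $F$ only helps — shows that $\cS$ is both open and closed in $[0,1]$: closedness follows because a Cauchy sequence of solutions (in $\cM^1$, hence in $\cH^1$) converges to a solution, and openness follows by a fixed-point/contraction argument on the perturbation $|\theta-\theta_0|$ small, again fueled by the uniform estimate. Hence it suffices to show $0\in\cS$.

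For $\theta=0$ the equation is the linear Neumann BSPDE
\begin{equation*}
-dv_t(y)=\bigl[\alpha D^2v+f_t(y)\bigr]\,dt-\zeta_t(y)\,dW_t,\qquad Dv_t(a)=0,\quad v_T=G.
\end{equation*}
Existence in $\cH^1$ of a strong solution for this super-parabolic linear problem is classical (e.g., via Galerkin approximation on $[a,b]$, or by the results of \cite{Bayraktar-Qiu_2017} after a cutoff and extension); once such a solution is in hand, Proposition \ref{priori_est} (applied with $F\equiv 0$, whose Lipschitz constant is $0$, and $F_0=f$) automatically upgrades it to $\cM^1$ with the stated bound. Thus $0\in\cS$, so $1\in\cS$, which is precisely the claim: BSPDE \eqref{Lip_bspde} has a unique strong solution in $\cM^1$, and the norm bound is inherited verbatim from Proposition \ref{priori_est}.

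The step I expect to be the main obstacle is the clean construction of the base-case linear solution \emph{directly in a space compatible with the $\cM^1$ estimate} — in particular handling the unbounded domain $\cD=[a,\infty)$ and the Neumann condition simultaneously. The natural route is to solve first on bounded domains $[a,b]$ with, say, Neumann at $a$ and Dirichlet (or Neumann) at $b$, derive the $\cM^1$-type bounds uniformly in $b$ using the same integration-by-parts computation as in Proposition \ref{priori_est} together with cutoff functions that are flat near $a$ (so no boundary terms are generated), and then pass $b\to\infty$; weak-* compactness in $\sS^\infty_{\sF}(0,T;H^1)$ and the conditional-expectation structure of the $\sM^2$-norm give a limit in $\cM^1$, while the BSPDE itself is preserved by testing against compactly supported $\phi\in H^1(\cD)$. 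An alternative, somewhat lighter, argument is to note that the weight $\theta(y)=(1+|y-a|^2)^{-1}$ already used in the paper turns the problem on $\cD$ into one with integrable weight, so that the Galerkin basis lives in a genuine Hilbert space and the estimates close without a separate truncation; either way the bookkeeping of boundary terms at $y=a$ under the Neumann condition is the part that needs care.
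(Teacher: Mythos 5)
Your overall architecture --- uniqueness and the $\cM^1$ bound from Proposition \ref{priori_est}, plus a method-of-continuity homotopy whose openness/closedness is powered by the $\theta$-uniform a priori estimate --- is exactly the paper's Step 2, and that part of your argument is sound (the remark that the estimate's constant is uniform in $\theta$ because damping $\sigma^*D\zeta$ and $F$ preserves superparabolicity and only shrinks Lipschitz constants is correct). The genuine gap is the base case. Your homotopy only switches off $\sigma^*D\zeta$ and $F$, so at $\theta=0$ you are left with the linear Neumann BSPDE $-dv=[\alpha D^2v+f]\,dt-\zeta\,dW_t$ on the half-line with the \emph{random, $y$-dependent} leading coefficient $\alpha$. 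Solvability of this is not classical: the paper itself points out that the Neumann theory of \cite{Bayraktar-Qiu_2017} is confined to bounded domains, and ``cutoff and extension'' or ``Galerkin on $[a,b]$ and let $b\to\infty$'' is precisely the nontrivial construction that has to be carried out, not a citation --- you flag it yourself as the main obstacle but leave it as a sketch. So the heart of the existence proof is missing.

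The paper avoids this difficulty by a different choice of homotopy: it interpolates the leading operator as well, $Q_\varepsilon(v,\zeta)=(1-\varepsilon)D^2v+\varepsilon[\alpha D^2v+\sigma^*D\zeta+F]$, so the base case $\varepsilon_0=0$ is the \emph{constant-coefficient} model equation $-dv=[D^2v+h]\,dt-\zeta\,dW_t$. That equation is solved by an elementary device unavailable in your setting: approximate $(G,h)$ by smoother data, solve the deterministic Neumann heat equation pathwise (Lieberman), set $v^n_t(y)=E[\tilde v^n_t(y)\,|\,\sF_t]$ --- legitimate only because $D^2$ is deterministic and hence commutes with conditional expectation --- and recover $\zeta^n$ from the martingale representation in the resulting backward SDE; differentiating the Neumann problem into a Dirichlet one then gives the $H^2$-regularity of $v^n$ and $D\zeta^n\in L^2$, and Proposition \ref{priori_est} upgrades everything to $\cM^1$ and passes to the limit in $n$. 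With your base case the conditional-expectation trick fails ($E[\alpha D^2\tilde v\,|\,\sF_t]\neq\alpha D^2 E[\tilde v\,|\,\sF_t]$), so to complete your proof you must either execute in full the bounded-domain approximation with $b$-uniform $\cM^1$ estimates and a limiting argument preserving the Neumann condition, or simply adopt the paper's homotopy so that the base case reduces to the deterministic heat semigroup; as written, the claim ``$0\in\cS$'' is unproved.
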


\begin{proof}
\textbf{Step 1:}We first show that the model equation
\begin{equation}\label{simpleBspde}
\left\{
  \begin{split}
    -dv_t(y)&=\bigl[D^2v_t(y)+h_t(y)\bigr]\,dt -\zeta_t(y)\,dW_t,  \quad \forall (t,y)\in(0,T)\times\cD,\\
    Dv_t(a)&=0,  \quad t\in[0,T],\\
    v_T(y)&=G(y), \quad \forall y\in\bar{\cD}.
  \end{split}
  \right.
\end{equation}
with $h\in  \sM^2_{\sF}(0,T;L^2(\cD))$ admits a unique strong solution $(v,\zeta)\in\cM^1$. In fact, the uniqueness follows directly from   Proposition \ref{priori_est}. It remains to prove the existence. 

Find a sequence $(G^n,h^n)\in L^{\infty}(\Omega,\sF_T;H^2(\cD))\times\sM^2_{\sF}(0,T;H^1(\cD))$,  such that $(G^n,h^n)\rrow(G,h)$ in $L^\infty (\Omega,\sF_T;H^1(\cD))\times\sM^2_{\sF}(0,T;L^2(\cD))$ with $DG^n(a)=0$.  Consider the following PDE with the Neumann boundary condition
\begin{equation}\label{pde}
\left\{
  \begin{split}
    -\frac{\partial}{\partial t}\tilde{v}^n_t(y)&=\bigl[D^2\tilde{v}^n_t(y)+h^n_t(y)\bigr], \quad \forall y\in\cD,\\
    D \tilde{v}^n_t(a)&=0,  \quad t\in[0,T],\\
   \tilde{v}^n_T(y)&=G^n(y), \quad \forall y\in\bar{\cD}.
  \end{split}
  \right.
\end{equation}
By \cite[Proposition 7.18]{Lieberman}, it has a unique solution $\tilde{v}$, such that $\tilde{v}$, $D\tilde{v}$, $D^2\tilde{v}$ and $\frac{\partial }{\partial t}\tilde{v}$ are lying in  $ L^2(\Omega,\sF_T;L^2(0,T;L^2(\cD)))$. Define $v_t^n(y)=E[\tilde{v}^n_t(y)|\sF_t]$ and solve for $\zeta^n$ the backward SDE
\begin{equation}\label{bsde}
v^n_t(y)=G^n(y)+\int_t^T\bigl[D^2v^n_s(y)+h^n_s(y)\bigr]\,ds -\int_t^T\zeta^n_s(y)\,dW_s,  \quad \forall y\in\cD,
\end{equation}
Then, $v^n\in \cS^2_{\sF}(0,T;H^1(\cD))\cap\sL^2_{\sF}(0,T;H^2(\cD))$  and $\zeta^n\in\sL^2_{\sF}(0,T;L^2(\cD))$. 

In order to prove that $(v^n,\zeta^n)$ is the strong solution to \eqref{simpleBspde} with $(G^n,h^n)$, we claim that $\zeta^n\in\sL^2_{\sF}(0,T;H^1(\cD))$. In fact, taking the derivative in $y$ of both sides of \eqref{pde}, $D\tilde{v}^n$ is a strong solution of the following PDE with the Dirichlet boundary condition
\begin{equation*}
\left\{
  \begin{split}
    -\frac{\partial }{\partial t}D\tilde{v}^n&=\bigl[D^2(D\tilde{v}^n_t(y))+Dh^n_t(y)\bigr], \quad \forall y\in\cD,\\
   D\tilde{v}^n(t,a)&=0,  \\
   D\tilde{v}^n(T,y)&=DG^n(y), \quad \forall y\in {\cD}.
  \end{split}
  \right.
\end{equation*}
Similarly, it is easy to prove that $(Dv^n,D\zeta^n)$ is solution of the backward SDE \eqref{bsde} associated to $(DG^n,Dh^n)$. Then, $Dv^n\in \cS^2_{\sF}(0,T;H^1(\cD))\cap\sL^2_{\sF}(0,T;H^2(\cD))$  and $D\zeta^n\in\sL^2_{\sF}(0,T;L^2(\cD))$. Therefore, $(v^n,\zeta^n)\in\cH^1$ is strong solution of \eqref{simpleBspde} associated with $(G^n,h^n)$, and by Proposition \ref{priori_est}, we further have $(v^n,\zeta^n)\in\cM^1$. Moreover, the a priori estimate in Proposition \ref{priori_est} yields the convergence of $(v^n,\zeta^n)$ and then the existence of the strong solution $(v,\zeta)\in \cM^1$ for BSPDE \eqref{simpleBspde}. 

 \noindent \textbf{Step 2:}  We prove the unique existence of the strong solution to \eqref{Lip_bspde} with the method of continuation.  For each $\eps\in[0,1]$, define 
 $$Q_{ \varepsilon}(v,\zeta)=(1- \varepsilon)D^2v+ \varepsilon[\alpha D^2v+\sigma^* D\zeta+F(t,y,Dv,v,\zeta)], \text{ for }(v,\zeta)\in\cM^1.$$ 
 For each $(\hat v,\hat\zeta)\in \cM^1$ and $\eps_0,\eps\in[0,1]$, consider BSPDE
 \begin{equation}\label{Lip_bspde-eps}
\left\{
  \begin{split}
    -dv_t(y)&=\bigl[   Q_{ \varepsilon_0}(v,\zeta) + Q_{ \varepsilon}(\hat v,\hat \zeta)- Q_{ \varepsilon_0}(\hat v,\hat \zeta)     \bigr](t,y)\,dt -\zeta_t(y)\,dW_t,\\
    Dv_t(a)&=0,  \quad t\in[0,T],\\
    v_T(y)&=G(y).
  \end{split}
  \right.
\end{equation}
Notice that for each $(\hat v,\hat\zeta)\in \cM^1$, we have 
\begin{align*}
Q_{ \varepsilon}(\hat v,\hat \zeta)- Q_{ \varepsilon_0}(\hat v,\hat \zeta) 
	=
	( \varepsilon -\eps_0) \left(- D^2\hat v+\alpha D^2\hat v+\sigma^* D\hat\zeta+F(t,y,D\hat v,\hat v,\hat \zeta)\right),
\end{align*}
and that the a priori estimate in Proposition \ref{priori_est} is applicable to the BSPDE \eqref{Lip_bspde-eps} for any $\eps_0,\eps\in [0,1]$. In particular, when $\eps_0=0$, it has been shown in Step 1 that  the BSPDE \eqref{Lip_bspde-eps}  admits a unique strong solution $(v,\zeta)\in\cM^1$ for any $\eps\in[0,1]$ and $(\hat v,\hat\zeta)\in \cM^1$.

Suppose that for some $\eps_0\in [0,1]$, the BSPDE \eqref{Lip_bspde-eps} has a unique strong solution $(v,\zeta)\in\cM^1$ for each given $(\hat v,\hat\zeta)\in \cM^1$. We may define the solution mapping:
$$
\Pi_{\eps}: \, \cM^1 \rightarrow \cM^1, \quad (\hat v,\hat\zeta)\mapsto (v,\zeta).
$$
Then for any $(\hat v^i,\hat \zeta^i)\in \cM^1$, $i=1,2$, set $(\delta \hat v,\delta \hat \zeta)=(\hat v^1-\hat v^2,\hat \zeta^1-\hat \zeta^2)$, and  we have by Proposition \ref{priori_est}
\begin{align*}
\|\Pi_{\eps}(\hat v^1,\hat\zeta^1) -   \Pi_{\eps}(\hat v^2,\hat\zeta^2)\|_{\cM^1}
&\leq
  C |\eps_0-\eps|\, \left\|
  -D^2 \delta \hat v + \alpha D^2\delta \hat v+\sigma^* D \delta \hat\zeta+F(\cdot,\cdot,D\hat v^1,\hat v^1,\hat \zeta^1) \right.
   \\
   &\quad\quad\quad 
   	 \left. -F(\cdot,\cdot,D\hat v^2,\hat v^2,\hat \zeta^2) 
  \right\|_{\sM_{\sF}^2(0,T;L^2)}
  \\
  &\leq \overline C|\eps-\eps_0|\, \left\| (\delta \hat v, \,\delta \hat \zeta)   \right\|_{\cM^1},
\end{align*}
where the constant $\overline C$ is independent of  the pair $(\eps,\eps_0)$. Hence, the mapping $\Pi_{\eps}$ from $\cM^1$ to itself is clearly a contraction mapping and has a unique fixed point  whenever $|\eps_0-\eps|< \delta_0:= \frac{1}{\overline C}$. Starting from $\eps_0=0$, we may arrive at $\eps=1$ in finite steps, and we conclude that $\Pi_1$ has a unique fixed point $(v,\zeta)\in \cM^1$ which is indeed a strong solution of BSPDE \eqref{Lip_bspde}. The uniqueness and estimation follow directly from Proposition \ref{priori_est}.
\end{proof}

We also have the following comparison theorem for the solutions.
\begin{thm}\label{comparison-thm}
Let Assumption \ref{AssHJB} hold. Suppose that $(v^1,\zeta^1)$ and $(v^2,\zeta^2)$ are strong solutions of \eqref{Lip_bspde} with, respectively, $(G^1,F^1)$ and $(G^2,F^2)$ satisfying Assumption \ref{AssLip}. Moreover, we assume both $v^1$ and $v^2$ satisfy the boundedness condition \eqref{eq-bd-dphi}. If $G^1 \leq G^2$ and $F^1(t,y,Dv^1,v^1,\zeta^1)\leq F^2(t,y,Dv^1,v^1,\zeta^1)$, then for each $(t,y)\in [0,T]\times \cD$, it holds that
$$v^1_t(y)\leq v^2_t(y), \quad\text{a.s.} $$
\end{thm}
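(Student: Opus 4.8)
The plan is to study the difference $\delta v := v^1 - v^2$, $\delta\zeta := \zeta^1 - \zeta^2$, show it solves a linear Neumann-problem BSPDE with a nonpositive forcing term and nonpositive terminal value, and then prove that such a BSPDE has a nonpositive solution. First I would subtract the two equations. Writing $F^1(t,y,Dv^1,v^1,\zeta^1) - F^2(t,y,Dv^2,v^2,\zeta^2)$ as $\big[F^1(t,y,Dv^1,v^1,\zeta^1) - F^2(t,y,Dv^1,v^1,\zeta^1)\big] + \big[F^2(t,y,Dv^1,v^1,\zeta^1) - F^2(t,y,Dv^2,v^2,\zeta^2)\big]$, the first bracket is $\le 0$ by hypothesis, and the second can be linearized: since $F^2$ is Lipschitz in $(p,q,r)$, there are bounded predictable fields $b,c,(\ell^i)_{i\le d}$ (obtained, e.g., by the usual difference-quotient construction) with $F^2(t,y,Dv^1,v^1,\zeta^1) - F^2(t,y,Dv^2,v^2,\zeta^2) = b\,D\delta v + c\,\delta v + \ell^*\delta\zeta$. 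Hence $(\delta v,\delta\zeta)$ satisfies
\begin{equation*}
\left\{
\begin{split}
-d(\delta v_t) &= \big[\alpha D^2\delta v + (\sigma+\ell)^* D(\delta v\text{-terms})\ \ldots\big]\,dt - \delta\zeta_t\,dW_t \ ,\\
D\delta v_t(a) &= 0,\qquad \delta v_T = G^1 - G^2 \le 0,
\end{split}
\right.
\end{equation*}
more precisely $-d(\delta v_t) = [\alpha D^2\delta v + \sigma^* D\delta\zeta + b D\delta v + c\,\delta v + \ell^*\delta\zeta + h]\,dt - \delta\zeta_t\,dW_t$ with $h := F^1(t,y,Dv^1,v^1,\zeta^1) - F^2(t,y,Dv^1,v^1,\zeta^1) \le 0$, and $h \in \sM^2_{\sF}(0,T;L^2(\cD))$ because both $F^1$ and $F^2$ evaluated along $(v^1,\zeta^1)$ lie in that space.

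The core step is then a maximum-principle / sign argument: a strong solution of this linear Neumann-problem BSPDE with $\delta v_T \le 0$ and nonpositive inhomogeneity $h$ must have $\delta v \le 0$. I would prove this by an energy estimate on the positive part. Let $w := (\delta v)^+$. Applying the generalized It\^o formula for square norms to $\|w_t\|^2$ (using that $w = \delta v$ on $\{\delta v > 0\}$, $Dw = D\delta v \cdot \mathbf 1_{\{\delta v>0\}}$, and that the terminal term contributes $\|(G^1-G^2)^+\|^2 = 0$), the second-order term $2\langle w, \alpha D^2\delta v\rangle$ integrates by parts against the Neumann boundary condition $D\delta v_t(a)=0$ to give $-2\langle \alpha Dw, Dw\rangle - 2\langle D\alpha\, w, Dw\rangle$, and the superparabolicity $(\mathcal A3)$ furnishes the coercive term $-2\kappa\|Dw\|^2$; here it is essential that $D\delta v$ is bounded (condition \eqref{eq-bd-dphi} on both $v^1,v^2$), so that all products $w\cdot D^2\delta v$, $w\cdot D\delta\zeta$, etc. are integrable and the It\^o-for-the-positive-part computation is legitimate. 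The term $2\langle w, h\rangle \le 0$ is discarded with the right sign; the remaining first-order and $\zeta$-terms ($\sigma^* D\delta\zeta$, $bD\delta v$, $c\,\delta v$, $\ell^*\delta\zeta$, together with a gradient bound obtained by the Step-2-type derivative estimate of Proposition \ref{priori_est} applied to $\delta\zeta$) are absorbed into $\kappa\|Dw\|^2$ and $C\|w\|^2$ using Young's inequality, exactly as in the a priori estimate. Taking conditional expectations and using the Burkholder--Davis--Gundy argument to kill the martingale part yields $\|w_t\|^2 \le C\,E[\int_t^T \|w_s\|^2\,ds \mid \sF_t]$, and Gronwall's lemma (in the conditional-expectation form used throughout the paper) forces $\|w_t\|^2 = 0$, i.e. $\delta v_t \le 0$ a.e. on $\cD$ a.s., for every $t$.

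The main obstacle is making the It\^o formula for $\|(\delta v)^+\|^2$ rigorous given only the $\cH^1$-type regularity of the solutions: $(\delta v)^+$ is not smooth, so one must either invoke a mollification/approximation of the function $r\mapsto (r^+)^2$ (smooth convex approximations $\phi_\varepsilon$ with $\phi_\varepsilon'' \ge 0$, then pass to the limit) or appeal to an It\^o--Krylov-type formula for BSPDEs; the boundedness \eqref{eq-bd-dphi} of $D\delta v$ is precisely what guarantees the uniform integrability needed to pass to the limit and to handle the boundary term at $y=a$ (as stressed in Remark \ref{rmk-FBSDE-BSPDE}). A secondary technical point is verifying that the linearization coefficients $b,c,\ell$ are genuinely $\sP\otimes\mathscr B$-measurable and essentially bounded by the Lipschitz constant $K$, which is routine. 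Everything else parallels the a priori estimates already established in Proposition \ref{priori_est}.
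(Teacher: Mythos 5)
Your route---linearizing the difference and running an energy estimate on the positive part $(v^1-v^2)^+$---is genuinely different from the paper's. The paper fixes $(t,y)$, applies Theorem \ref{thm-ito-wentzell} (this is precisely where the hypothesis \eqref{eq-bd-dphi} on $Dv^1,Dv^2$ is used) to represent each solution along the reflected factor process as a BSDE with $Y^i_s=v^i_s(y^{t,y}_s)$, $Z^i_s=\zeta^i_s(y^{t,y}_s)+Dv^i_s(y^{t,y}_s)\sigma_s(y^{t,y}_s)$, $\bar Z^i_s=Dv^i_s(y^{t,y}_s)\bar\sigma_s(y^{t,y}_s)$, inverts $(Dv^i,\zeta^i)$ from $(Z^i,\bar Z^i)$ using the superparabolicity $|\bar\sigma|^2\ge\kappa$, and then invokes the classical comparison theorem for Lipschitz BSDEs. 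Your approach is exactly the ``standard method'' the authors mention in Remark \ref{rmk-comparison-thm} and deliberately do not carry out; your handling of the terminal term, of $h:=F^1(\cdot,Dv^1,v^1,\zeta^1)-F^2(\cdot,Dv^1,v^1,\zeta^1)\le 0$, and of the lower-order terms $bD\delta v$, $c\,\delta v$, $\ell^*\delta\zeta$ is fine.

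However, as written your central estimate has a genuine gap at the cross term $2\langle (\delta v)^+,\sigma^* D\delta\zeta\rangle$. You propose to absorb it ``exactly as in the a priori estimate,'' i.e.\ bound it by $\frac{\Lambda^2}{\epsilon_2}\|(\delta v)^+\|^2+\epsilon_2\|D\delta\zeta\|^2$ and control $D\delta\zeta$ by a Step-2-type derivative estimate. In Proposition \ref{priori_est} this absorption closes only because, for uniqueness, the difference equation has zero data; here the Step-2 bound on $E[\int\|D\delta\zeta_s\|^2ds\,|\,\sF_t]$ is in terms of $h$ and $G^1-G^2$ (and of $\delta v$ itself, not of its positive part), none of which vanish. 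The resulting inequality $\|(\delta v_t)^+\|^2\le \frac{C\Lambda^2}{\epsilon_2}E[\int_t^T\|(\delta v_s)^+\|^2ds\,|\,\sF_t]+\epsilon_2 A$ gives, after Gronwall, a bound of order $\epsilon_2\,A\,e^{C\Lambda^2(T-t)/\epsilon_2}$, which does not tend to $0$ as $\epsilon_2\to0$; so the argument does not yield $(\delta v)^+=0$. The alternative is to integrate the cross term by parts and combine $\|\delta\zeta\mathbf{1}_{\{\delta v>0\}}\|^2-2\langle \sigma D(\delta v)^+,\delta\zeta\rangle\ge -\|\sigma D(\delta v)^+\|^2$ with $2\alpha-|\sigma|^2=|\bar\sigma|^2\ge\kappa$, as in \cite[Proposition A.2]{Horst-Qiu-Zhang-14}; but that argument is on the whole space, whereas on $\cD=[a,\infty)$ the integration by parts leaves the boundary contribution $(\delta v_t(a))^+\,\sigma^*_t(a)\,\delta\zeta_t(a)$, which is not killed by the boundary condition (only $D\delta v_t(a)=0$ is imposed, not the full co-normal condition $\alpha D\delta v+\sigma^*\delta\zeta=0$ at $y=a$) and has neither a sign nor an obvious bound. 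Until you control this boundary term (e.g.\ by a reflection/extension argument reducing to the whole-line case, which itself needs justification), the energy route is not complete; by comparison, the mollification of $r\mapsto (r^+)^2$ that you flag as the main obstacle is the routine part.
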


\begin{proof}
Fix some given $(t,y)\in[0,T]\times\cD$. By Theorem \ref{thm-ito-wentzell}, we have for $i=1,2$
\begin{align}
v^i_t(y)
&=G^i(y_T^{t,y})+\int_t^T\left[ F^i(s,y^{t,y}_s,Dv^i_s(y^{t,y}_s),v^i_s(y^{t,y}_s),\zeta^i_s(y^{t,y}_s))-\beta_s(y^{t,y}_s) Dv^i_s(y^{t,y}_s)\right]\,ds
\nonumber\\
&\quad
- \int_t^T \left(\zeta^i_r(y^{t,y}_r) +Dv^i_r(y^{t,y}_r)\sigma_r(y^{t,y}_r)\right)\,dW_r
     -\int_t^T Dv^i_r(y^{t,y}_r)\bar\sigma_r(y^{t,y}_r)\,dB_r, \quad \text{a.s.}   \label{BSDE-v}
\end{align}
For each $s\in[t,T]$ and $i=1,2$, set
\begin{align*}
Y^i_s
=v^i_s(y^{t,y}_s),\quad
Z^i_s
= (\zeta^i_s(y^{t,y}_s) +Dv^i_s(y^{t,y}_s)\sigma_s(y^{t,y}_s)
\quad \text{and}\quad
\bar Z^i_s
=
Dv^i_s(y^{t,y}_s)\bar\sigma_s(y^{t,y}_s),
\end{align*}
and it follows that
$$
Dv^i_s(y^{t,y}_s) = \frac{\bar\sigma^*_s(y^{t,y}_s) \bar Z^i_s}{|\bar\sigma_s(y^{t,y}_s)|^2} 
\quad \text{and}\quad
\zeta^i_s(y^{t,y}_s) = Z^i_s-   \frac{\bar\sigma^*_s(y^{t,y}_s) \bar Z^i_s}{|\bar\sigma_s(y^{t,y}_s)|^2}  \sigma_s(y^{t,y}_s) .
$$
Then the equation \eqref{BSDE-v} may be written equivalently as the following BSDE:
\begin{align}
&Y^i_t-G^i(y_T^{t,y})\nonumber\\
&=
\int_t^T\left[ F^i\left(s,y^{t,y}_s,  \frac{\bar\sigma^*_s(y^{t,y}_s) \bar Z^i_s}{|\bar\sigma_s(y^{t,y}_s)|^2}  ,Y^i_s,  Z^i_s-   \frac{\bar\sigma^*_s(y^{t,y}_s) \bar Z^i_s}{|\bar\sigma_s(y^{t,y}_s)|^2}  \sigma_s(y^{t,y}_s)\right)-\beta_s(y^{t,y}_s)  \frac{\bar\sigma^*_s(y^{t,y}_s) \bar Z^i_s}{|\bar\sigma_s(y^{t,y}_s)|^2} \right]\,ds
\nonumber\\
&\quad
- \int_t^T Z^i_r\,dW_r
     -\int_t^T\bar Z^i_r\,dB_r, \quad \text{a.s., for }i=1,2.    \label{BSDE-Y}
\end{align}
Under Assumption \ref{AssHJB}, it is easy to check that the coefficients $F^i$ in BSDEs \eqref{BSDE-Y}  are Lipschitz continuous with respect to $(Y^i, Z^i, \bar Z^i)$. Then,  the standard comparison theorem for BSDEs (see \cite[Theorem 2.2]{Karoui_Peng_Quenez} for instance) indicates that $Y_t^1\leq Y^2_t$ a.s., i.e., $v^1_t(y)\leq v_t^2(y)$ a.s.
\end{proof}

\begin{rmk}\label{rmk-comparison-thm}
In view of the above proof, we see that the terminal time $T$ in Theorem \ref{comparison-thm} may be a stopping time. Besides, the above proof is not standard for BSPDEs.  With a standard method, we may first prove the It\^o formula for the square norm of $(v^1-v^2)^+$ and follow a similar way to \cite[Proposition A.2]{Horst-Qiu-Zhang-14} to complete the proof. Nevertheless, for the sake of simplicity, we would not seek such a generality in this paper.
\end{rmk}

\section{ Unique existence of strong solution to BSPDE \eqref{ValueFunction_u} and verification theorem}

Notice that the BSPDE \eqref{HJBequ_weight} has a $q^*$th-power growth in $v$ in the drift term and the terminal term is $\infty$.  In the first subsection, the existence result for equation \eqref{HJBequ_weight} is proved with the method of truncations, and in the second subsection, we derived the verification theorem as well as the uniqueness. In contrast to the methods in \cite{GraeweHorstQui13,Horst-Qiu-Zhang-14} for BSPDEs on the whole space with quadratic growth, the main difference comes from the treatments of the general $q^*$th-power growth, the bounded gradient estimates, the finer space for solutions, and the nontrivial domain.

\subsection{Existence of a pair of strong solutions to HJB equation \eqref{ValueFunction_u}}
To prove the existence of the strong solution of HJB equation \eqref{ValueFunction_u}, equivalently we can prove the existence for  the weighted equation \eqref{HJBequ_weight}. We first consider the following BSPDE \eqref{HJBequ_weight} with a finite (truncated) terminal condition: for each $M\in\bN^+$,
\begin{equation}\label{HJBequ_truncate}
\left\{
\begin{split}
-dv^M_t(y)&=\left[\alpha D^2v^M+\sigma^*D\zeta^M+\lambda\theta-\frac{|v^M|^{q^*}}{(q^*-1)|\theta\eta|^{q^*-1}}
-\mu(\cZ) v^M
+f(t,y,Dv^M,v^M,\zeta^M)
\right.
\\
&\quad \quad
\left. 
+\int_{\mathcal{Z}}\frac{\theta\gamma(\cdot,z)v^M}{( |\theta\gamma(\cdot,z)|^{q^*-1}+|v^M|^{q^*-1})^{q-1}}\,\mu(dz)   \right](t,y)\,dt
-\zeta^M_t(y)\,dW_t,\\
&\quad \quad \forall (t,y)\in[0,T)\times \cD,\\
D v^M_t(a) &=0, \quad \forall t\in[0,T],\\
v^M_T(y)&=M\theta(y),\quad \forall y\in {\cD}.
\end{split}
\right.
\end{equation}

We will first discuss the existence of strong solutions of the above equation and then derive the existence for BSPDE \eqref{HJBequ_weight} by taking $M\rrow \infty.$

\begin{prop}\label{exis_truncated_equ}
Let Assumption \ref{AssHJB} hold. BSPDE \eqref{HJBequ_truncate} admits a unique strong solution $(v^M,\zeta^M)$ in $\cM^1$. Moreover, $Dv^M\in \sS^{\infty}_{w,\sF}(0,T;L^{\infty}(\cD))$ and the sequence $\{v^M\}_{M\in \bN^+}$ is increasing as $M$ tends to infinity.
\end{prop}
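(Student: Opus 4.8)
The plan is to obtain existence and uniqueness for \eqref{HJBequ_truncate} by a fixed-point argument built on top of Theorem \ref{exi_uni_Lip}, after first truncating the nonlinearity so that it becomes globally Lipschitz. The nonlinear terms appearing in the drift, namely $N(v):=-\frac{|v|^{q^*}}{(q^*-1)|\theta\eta|^{q^*-1}}$ and the integral term $\int_{\cZ}\frac{\theta\gamma(\cdot,z)v}{(|\theta\gamma(\cdot,z)|^{q^*-1}+|v|^{q^*-1})^{q-1}}\,\mu(dz)$, are locally Lipschitz in $v$ (the latter is in fact globally Lipschitz with constant $\mu(\cZ)$, since $|w/(c^{q^*-1}+|w|^{q^*-1})^{q-1}|\le |w|^{1-(q^*-1)(q-1)}=1$ up to the sign and a careful check, so its derivative in $v$ is bounded), while $N$ has at most $q^*$-power growth. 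So first I would introduce, for each $R>0$, a $C^1$ truncation $\chi_R$ of the identity with $\chi_R(v)=v$ for $|v|\le R$, $|\chi_R|\le 2R$, $|\chi_R'|\le 1$, and replace $v$ by $\chi_R(v)$ inside $N$ (and if needed inside the $\cZ$-integral). The resulting coefficient $F^R(t,y,Dv,v,\zeta)=\lambda\theta + N(\chi_R(v)) + \int_\cZ(\cdots)(\chi_R(v)) + f(t,y,Dv,v,\zeta) - \mu(\cZ)v$ is then globally Lipschitz in $(Dv,v,\zeta)$ with a constant depending on $R$, $\Lambda$, $\kappa_0$, $q$, $\mu(\cZ)$, and $F^R_0=F^R(\cdot,\cdot,0,0,0)=\lambda\theta\in\sM^2_\sF(0,T;L^2(\cD))$ since $\lambda$ is bounded and $\theta\in L^2(\cD)$; also the linear part $f$ has bounded coefficients because $(y-a)\theta(y)$ and $(y-a)^2\theta(y)$ are bounded. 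Hence Theorem \ref{exi_uni_Lip} gives a unique strong solution $(v^{M,R},\zeta^{M,R})\in\cM^1$ to the $R$-truncated version of \eqref{HJBequ_truncate}, with terminal datum $M\theta\in L^\infty(\Omega,\sF_T;H^1(\cD))$ (note $D(M\theta)(a)=0$ since $\theta'(a)=0$).

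The second step is the crucial a priori bound: I must show $\|v^{M,R}\|_{\sS^\infty_\sF(0,T;L^\infty(\cD))}$ (or at least in sup-norm) is bounded by a constant independent of $R$, so that choosing $R$ larger than that bound removes the truncation. The key structural fact is that the genuinely dangerous nonlinearity $N$ has the \emph{good sign}: it is nonpositive, and it acts as a damping term. Quantitatively, one compares $v^{M,R}$ against the solution of the spatially-homogeneous ODE obtained by dropping $\alpha D^2 v$, $\sigma^* D\zeta$ and the gradient terms — more precisely, one uses the comparison theorem (Theorem \ref{comparison-thm}, valid here since the relevant gradients are bounded, being $\cM^1$ solutions whose gradients lie in $\sS^\infty_{w,\sF}$ after the bootstrap of Step 3) against the explicit super-/sub-solutions $\bar v_t = C_1/(T-t+\epsilon)^{q-1}$-type barriers, exploiting $\eta\ge\kappa_0$ and boundedness of $\theta,\lambda,\gamma$. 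This yields $0\le v^{M,R}_t(y)\le c(T-t)^{-(q-1)}\wedge(\text{something involving }M)$ uniformly in $R$; in particular on $[0,T]$ the bound is $\le C(M,q,\kappa_0,\Lambda,T,\mu(\cZ))$, independent of $R$. Then fixing $R>C(M,\dots)$, the truncation is inactive and $(v^{M,R},\zeta^{M,R})$ solves \eqref{HJBequ_truncate}; uniqueness in $\cM^1$ follows from the comparison theorem applied in both directions (difference of two solutions satisfies the hypotheses once the gradient bound is in hand).

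The third step is to upgrade to $Dv^M\in\sS^\infty_{w,\sF}(0,T;L^\infty(\cD))$. For this I differentiate \eqref{HJBequ_truncate} in $y$: $w:=Dv^M$ solves a quasilinear BSPDE on $\cD$ with \emph{Dirichlet} boundary condition $w_t(a)=0$, with coefficients that are now controlled because $v^M$ and its relevant nonlinear feedbacks are already bounded in $L^\infty$ (from Step 2) and in $\cM^1$. Applying the maximum principle for weak solutions of quasilinear BSPDEs with Dirichlet conditions recalled in the appendix (the references \cite{fu2017maximum,QiuTangMPBSPDE11}) gives $\|w\|_{L^\infty}\le C$ with $C$ depending on the data through the already-established bounds and on $\|D(M\theta)\|_{L^\infty}$; weak continuity in time in $L^\infty$ comes from the $\cM^1$ regularity plus a standard approximation/lower-semicontinuity argument. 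Finally, monotonicity $v^M\le v^{M+1}$: since $(M+1)\theta\ge M\theta$ and the drift coefficient is the same, Theorem \ref{comparison-thm} (again legitimate because both gradients are now bounded) gives the result immediately.

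The main obstacle I anticipate is Step 2 — producing the $R$-independent $L^\infty$ bound with the right (singular-in-$T-t$) rate, since this is where the $q^*$-power nonlinearity, the Neumann boundary, the unbounded domain $\cD$, and the weight $\theta$ all interact; getting clean explicit barriers that are simultaneously compatible with the Neumann condition $Dv(a)=0$ and that dominate the $\theta$-weighted data requires care. A secondary technical point is ensuring the comparison theorem is actually applicable at each stage, i.e. that the gradient-boundedness hypothesis \eqref{eq-bd-dphi} holds — this forces the somewhat delicate interleaving of Steps 2 and 3 (one needs a preliminary, possibly $R$-dependent, gradient bound from the Lipschitz theory to even start invoking comparison, then bootstraps to the uniform bound).
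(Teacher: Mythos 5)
Your overall route is the same as the paper's: truncate the superlinear term so that Theorem \ref{exi_uni_Lip} applies, get the $L^\infty$ bound on the gradient by differentiating and applying the Dirichlet maximum principle of Proposition \ref{MP-general-domain-quasilinear-BSPDE}, then use the comparison theorem with explicit barriers to de-activate the truncation, and use comparison again for monotonicity in $M$. However, Step 1 as written would fail: truncating $v$ itself at a constant level $R$ does \emph{not} make the drift uniformly Lipschitz on $\cD$. The dangerous term is
$$
N(\chi_R(v))=-\frac{|\chi_R(v)|^{q^*}}{(q^*-1)\,|\theta\eta|^{q^*-1}},
$$
whose Lipschitz constant in $v$ is of order $q^*(2R)^{q^*-1}\kappa_0^{-(q^*-1)}\,\theta(y)^{-(q^*-1)}$, and $\theta(y)^{-(q^*-1)}=(1+(y-a)^2)^{q^*-1}\to\infty$ as $y\to\infty$. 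So your claim that the constant depends only on $R,\Lambda,\kappa_0,q,\mu(\cZ)$ is incorrect, Assumption \ref{AssLip}(1) (which demands a constant $K$ uniform in $(t,y)$) is violated, and Theorem \ref{exi_uni_Lip} cannot be invoked for your $R$-truncated equation. The repair is exactly the paper's choice of truncation in \eqref{HJBequ_truncate_N}: truncate the \emph{unweighted} quantity $\theta^{-1}|v|$ at level $N$, i.e.\ use $\big|(\theta^{-1}|v|)\wedge N\big|^{q^*-1}|v|/\big((q^*-1)|\eta|^{q^*-1}\big)$, whose $v$-Lipschitz constant is bounded by $q^*N^{q^*-1}\kappa_0^{-(q^*-1)}$ uniformly in $y$ (this is the whole point of working with the weight $\theta$). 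Correspondingly, the bound that removes the truncation must be on $\theta^{-1}v^{M,N}$, not on $v^{M,N}$; the paper obtains $0\le\theta^{-1}v^{M,N}\le M+\Lambda(T-t)$ by comparing with the explicit solution $\big(M+E[\int_t^T\lambda_s\,ds\,|\,\sF_t]\big)\theta$ of the equation with $N=0$, $\gamma\equiv\infty$ — no singular $(T-t)^{1-q}$ barrier is needed at this stage (that enters only in the limit $M\to\infty$).

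On the circularity you flag at the end: it is resolved exactly as you suspect and as the paper does, but note that the preliminary gradient bound does \emph{not} come from the Lipschitz $\cM^1$ theory (which only gives $Dv^{M,N}_t\in L^2(\cD)$, not $L^\infty$). One differentiates the truncated equation, checks Assumption \ref{ass-mp} for the resulting Dirichlet problem, and applies Proposition \ref{MP-general-domain-quasilinear-BSPDE} to get $Dv^{M,N}\in\sS^{\infty}_{w,\sF}(0,T;L^{\infty}(\cD))$ for each \emph{fixed} truncation level (the bound may depend on $M,N$ — that is harmless); only then is Theorem \ref{comparison-thm} legitimately applied, first to obtain the uniform bound $\theta^{-1}v^{M,N}\le M+\Lambda(T-t)$ that kills the truncation for large $N$, and then for uniqueness and the monotonicity in $M$.
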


\begin{proof}
Consider the following equation with the $q^*$th-power  term truncated by $N$ in \eqref{HJBequ_truncate}.  That is to use $\Big|\big(\theta^{-1}|v^{M,N}|\big)\wedge N\Big|^{q^*-1}v^{M,N}$ instead of $\theta^{-1}\big|v^{M,N}\big|^{q^*}$.
\begin{equation}\label{HJBequ_truncate_N}
\left\{
\begin{split}
-dv^{M,N}_t(y)
&=\Big[\alpha D^2v^{M,N}+\sigma D\zeta^{M,N}+\lambda\theta
-\frac{\big|\big(\theta^{-1}|v^{M,N}|\big)\wedge N\big|^{q^*-1}}{(q^*-1)|\eta|^{q^*-1}}|v^{M,N}|
-\mu(\cZ) v^{M,N}
\\
&\quad\quad
	+\int_{\mathcal{Z}}\frac{\theta \gamma(\cdot,z) v^{M,N}}{(|\theta\gamma(\cdot,z)|^{q^*-1}+|v^{M,N}|^{q^*-1})^{q-1}}\,\mu(dz)
	\\
&\quad\quad
		+f(t,y,Dv^{M,N},v^{M,N},\zeta^{M,N})\Big](t,y)\,dt
	-\zeta^{M,N}_t(y)\,dW_t,\\
Dv^{M,N}_t(a)&=0,\quad \forall t\in[0,T],\\
v^{M,N}_T(y)&=M\theta(y). 
\end{split}
\right.
\end{equation}
We may check that Assumption \ref{AssLip}  is satisfied. By Theorem \ref{exi_uni_Lip}, the above BSPDE admits a unique strong solution $(v^{M,N},\zeta^{M,N})\in\cM^1$ with 
\begin{align*}
\esssup_{(\omega\in\Omega)} \sup_{t\in[0,T]} \|v^{M,N}_t\|_{H^{1}(\cD)}
+\|v^{M,N}\|_{\sM^2_{\sF}(0,T;H^2(\cD))} + \|\zeta^{M,N}\|_{\sM^{2}_{\sF}(0,T; H^1(\cD))}
<\infty.
\end{align*}

Put
$(v', \zeta')=(Dv^{M,N},D\zeta^{M,N})$. Straightforward computations indicate that $( v', \zeta')$ is a weak solution of the following BSPDE with Dirichlet boundary conditions:
\begin{equation}\label{HJBequ_truncate_N-D}
\left\{
\begin{split}
-d v'_t(y)&=\left[D(\alpha D v')+D(\sigma \zeta')
-\mu(\cZ) v'
+D\left( f(t,y,Dv^{M,N},v^{M,N},\zeta^{M,N})\right)
		\right. \\
&\quad
	\left.
	+D\left(\lambda\theta
	-\frac{\big|\big(\theta^{-1}|v^{M,N}|\big)\wedge N\big|^{q^*-1}}{(q^*-1)|\eta|^{q^*-1}} \theta^{-1}|v^{M,N}| \cdot \theta   
	\right.\right.\\
 &\quad \quad \quad\left.\left.
 	+\int_{\mathcal{Z}}\frac{\theta \gamma(\cdot,z) v^{M,N}}{(|\theta\gamma(\cdot,z)|^{q^*-1}+|v^{M,N}|^{q^*-1})^{q-1}}\mu(dz)
		\right)
		\right](t,y)\,dt
	-\zeta'_t(y)\,dW_t,\\
v'_t(a)&=0,\quad \forall t\in[0,T],\\
v'_T(y)&=MD\theta(y),
\end{split}
\right.
\end{equation}
where 
\begin{align}
&D\left( f(t,y,Dv^{M,N},v^{M,N},\zeta^{M,N})\right)
\nonumber
\\
&= D\left(  \left[\beta+4(y-a) \alpha \theta\right]Dv^{M,N}
+2\theta\left[   \alpha     +(y-a)\beta\right]v^{M,N}
+2(y-a)\theta\sigma^*\zeta^{M,N}  \right)
\nonumber
\\
&=D\left[\beta+4(y-a) \alpha \theta\right] Dv^{M,N}
+\left[\beta+4(y-a) \alpha \theta\right] D^2 v^{M,N}
+ D(2\theta\left[   \alpha     +(y-a)\beta\right]) v^{M,N}
\nonumber
\\
&\quad
+ 2\theta\left[   \alpha     +(y-a)\beta\right]  Dv^{M,N}
+D(2(y-a)\theta\sigma^*)\zeta^{M,N} 
+2(y-a)\theta\sigma^*D \zeta^{M,N}.
\label{est-Df}
\end{align}
Recalling $\theta(y)=(1+(y-a)^2)^{-1}$, we have $(y-a)\theta(y)$ and all the derivatives of $\theta$ are lying in $L^{\infty}(\bR)\cap L^{r}(\bR)$ for all $r>1$, which together with Assumption \ref{AssHJB}   and Sobolev's embedding theorem indicates that Assumption \ref{ass-mp} holds. Indeed, when checking Assumption \ref{ass-mp} with $n=1$, we have $p>\max\{n+2,2+4/n\}=6$ and $\frac{p(n+2)}{p+2+n}>\frac{6(1+2)}{6+2+1}=2$, and as (we may easily check)  $D(2(y-a)\theta\sigma^*) \in \sM^r_{\sF}(0,T;L^r(\cD)) \cap \sS^{\infty}_{\sF}(0,T;L^{\infty}(\cD)$ for all $r>1$, an immediate consequence of H$\ddot{\text{o}}$lder's inequality gives
$$ D(2(y-a)\theta\sigma^*)\zeta^{M,N} \in \sM_{\sF}^{3}(0,T;L^3(\cD)) \cap \sM_{\sF}^{2}(0,T;L^2(\cD)). $$
All the other terms in \eqref{est-Df} follows in a similar way. In addition, we have obviously
\begin{align*}
  &\bigg( \lambda\theta
	-\frac{\big|\big(\theta^{-1}|v^{M,N}|\big)\wedge N\big|^{q^*-1}}{(q^*-1)|\eta|^{q^*-1}} \theta^{-1}|v^{M,N}| \cdot \theta   
	\\
 &\quad \quad \quad
 	+\int_{\mathcal{Z}}\frac{\theta \gamma(\cdot,z) v^{M,N}}{(|\theta\gamma(\cdot,z)|^{q^*-1}+|v^{M,N}|^{q^*-1})^{q-1}}\mu(dz) \bigg)
\in \sM_{\sF}^r(0,T;L^r(\cD)), 
\end{align*}
for all $r\in(1,\infty)$.
Therefore, we may apply Proposition \ref{MP-general-domain-quasilinear-BSPDE} and obtain that  
$$ v'= Dv^{M,N}\in \sS^{\infty}_{w,\sF}(0,T;L^{\infty}(\cD)) .$$

Then, we may apply the comparison theorem in Theorem \ref{comparison-thm} and obtain
$$\bar v^{M,0}\geq v^{M,0}\geq v^{M,1}\geq v^{M,2}\geq\cdots\geq 0,$$
where $(\bar v^{M,0},0)$ with $\theta^{-1}\bar v^{M,0}=M+E\left[\int_t^T\lambda(s)\,ds\big| \sF_t\right]$ is the strong solution to BSPDE \eqref{HJBequ_truncate_N} when $ N=0$ and $\gamma \equiv \infty$. Therefore, for any positive integer $N$, 
$$0\leq\theta^{-1}v^{M,N}\leq M+E\left[\int_t^T\lambda(s)\,ds\big| \sF_t\right] \leq M+(T-t)\Lambda.$$  
If we let $N$ be sufficiently large, then 
$$(\theta^{-1}|v^{M,N}|)\wedge N=\theta^{-1}|v^{M,N}|,$$
and BSPDE \eqref{HJBequ_truncate_N} turns out to be the same as equation \eqref{HJBequ_truncate}. That is, for any $M$, there is $N$ large enough,  such that 
$$(v^M,\zeta^M)=(v^{M,N},\zeta^{M,N})$$ is the unique strong solution to \eqref{HJBequ_truncate}. 

Also, by the comparison theorem, we have that:
$$v^1\leq v^2\leq \cdots\leq v^M\leq v^{M+1}\leq\cdots, \quad \sP\times dy\text{-almost surely}.$$
\end{proof}

\begin{rmk}\label{rmk-comparison-quadratic}
It is worth noting that in the above proof, the comparison theorem (Theorem \ref{comparison-thm}) is actually applied to BSPDEs with nonlinear growth with respect to $v^M$. This works well because $\theta^{-1}v^M$ is uniformly bounded and BSPDE \eqref{HJBequ_truncate} may be equivalently written as BSPDE \eqref{HJBequ_truncate_N} with sufficiently large $N$. Similar arguments will be omitted in what follows unless stated otherwise.
\end{rmk}

Let $M\rrow \infty$, we can prove the following existence theorem for equation \eqref{ValueFunction_u}.
\begin{thm}\label{thm-existence}
Let Assumption \ref{AssHJB} hold. BSPDE \eqref{ValueFunction_u}  admits a strong solution $(u,\psi)$ such that $ (\theta u,\theta\psi)\in \cM^1([0,\tau]\times \cD)$, $D(\theta u) \in \sS^{\infty}_{w,\sF}(0,\tau;L^{\infty}(\cD))$ for $\tau\in[0,T)$, and 
$$
\frac{c_0 }{(T-t)^{q-1}}
	\leq u_t(y) \leq
		\frac{C_0}{(T-t)^{q-1}},\quad\text{a.s. } \forall\,(t,y)\in [0,T)\times \cD,
$$
where the positive constants $c_0>0$ and $ C_0>0$ depend only on $q,\kappa_0, \Lambda, T$ and $\mu(\cZ)$.
\end{thm}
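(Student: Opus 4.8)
The strategy is to obtain the strong solution $(u,\psi)$ of \eqref{ValueFunction_u} as a monotone limit of the approximating solutions $(v^M,\zeta^M)$ to \eqref{HJBequ_truncate} produced by Proposition \ref{exis_truncated_equ}, working throughout with the weighted quantities $v^M=\theta u^M$ to exploit the uniform estimates available in $\cM^1$. The first task is to establish \emph{two-sided a priori bounds} on $u^M_t(y)=\theta^{-1}(y)v^M_t(y)$ that are uniform in $M$ on every interval $[0,\tau]$, $\tau<T$. The upper bound $u^M_t(y)\le C_0/(T-t)^{q-1}$ should come from comparison (Theorem \ref{comparison-thm}, applied as in Remark \ref{rmk-comparison-quadratic}) against the spatially homogeneous supersolution $\bar u_t$ solving the ODE $-\dot{\bar u}=-|\bar u|^{q^*}/((q^*-1)\kappa_0^{q^*-1})-\mu(\cZ)\bar u+\Lambda$ with $\bar u_T=+\infty$, whose explicit blow-up rate is $(T-t)^{-(q-1)}$; one checks the integral and Neumann-term contributions only \emph{help} (the $\gamma$-integral is nonnegative and bounded by $\mu(\cZ)u$, so it is dominated). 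The lower bound $u^M_t(y)\ge c_0/(T-t)^{q-1}$ is more delicate: it requires comparing from below against a strictly positive subsolution and using that $v^M$ is increasing in $M$ together with $v^M_T=M\theta\to\infty$; a natural candidate is again a spatially homogeneous ODE subsolution with the \emph{largest} possible coefficients (i.e.\ $\eta\le\Lambda$, $\lambda\ge 0$), for which the blow-up rate is still of order $(T-t)^{-(q-1)}$ with a smaller constant.

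Given these bounds, the second task is \emph{compactness and passage to the limit}. On each $[0,\tau]$ the sequence $\{(\theta u^M,\theta\zeta^M)\}$ is bounded in $\cM^1([0,\tau]\times\cD)$: indeed on $[0,\tau]$ the nonlinear drift terms are, thanks to the uniform bound $0\le u^M\le C_0(T-\tau)^{-(q-1)}$, Lipschitz in $(v,Dv,\zeta)$ with constants depending only on $\tau$, so Proposition \ref{priori_est} / Theorem \ref{exi_uni_Lip} applied on $[0,\tau]$ with terminal datum $v^M_\tau$ (itself uniformly bounded in $H^1(\cD)$ by the same argument one level up) gives a uniform $\cM^1([0,\tau]\times\cD)$ bound. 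Monotonicity of $v^M$ in $M$ then forces pointwise ($\sP\times dy$-a.e.) convergence $v^M\uparrow v$, and weak/weak-$*$ limits in the Hilbert-space components identify $\zeta$; the nonlinear terms converge by dominated convergence using the uniform bounds and the continuity of the nonlinearities in their arguments. One then checks that $(v,\zeta)$ solves \eqref{HJBequ_weight} on $[0,\tau]$ for every $\tau<T$, with the Neumann condition $Dv_t(a)=0$ preserved in the limit (in the averaged sense of Definition \ref{defn-soltn}), and that $D(\theta u)=Dv\in\sS^\infty_{w,\sF}(0,\tau;L^\infty(\cD))$ survives passage to the limit because the bound from Proposition \ref{MP-general-domain-quasilinear-BSPDE} is uniform in $M$ on $[0,\tau]$ (this is where the two-sided bounds on $u^M$ feed back into the maximum-principle hypothesis). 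Setting $u=\theta^{-1}v$, $\psi=\theta^{-1}\zeta$ yields the claimed strong solution, and the singular terminal condition $\lim_{\tau\to T}u_\tau(y)=\infty$ is read off from the lower bound $u_t(y)\ge c_0(T-t)^{-(q-1)}$.

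The \textbf{main obstacle} I anticipate is the \emph{uniform-in-$M$} control near the terminal time — specifically, showing that the $\cM^1([0,\tau]\times\cD)$ norms, and above all the gradient bound $\|D(\theta u^M)\|_{\sS^\infty_{w,\sF}(0,\tau;L^\infty)}$, do not blow up as $M\to\infty$ for $\tau$ fixed. The subtlety is that Proposition \ref{priori_est} was proved for genuinely Lipschitz coefficients, whereas here the Lipschitz constant of the $q^*$th-power nonlinearity is only controlled \emph{after} one knows $u^M$ is bounded on $[0,\tau]$, and that bound in turn rests on the comparison argument. So the logical order must be: (a) comparison-based two-sided pointwise bounds on $u^M$, uniform in $M$, on $[0,\tau]$; (b) feed these into the linearization so the nonlinear drift is uniformly Lipschitz on $[0,\tau]$; (c) invoke Theorem \ref{exi_uni_Lip} and Proposition \ref{MP-general-domain-quasilinear-BSPDE} with $M$-independent constants; (d) extract the monotone limit. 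A secondary technical point is handling the terminal datum $v^M_\tau$ when iterating this on shrinking right-endpoints: one fixes $\tau$ once and for all and treats $v^M_\tau$ as the terminal condition, using that $\|v^M_\tau\|_{L^\infty(\Omega;H^1(\cD))}$ is itself bounded uniformly in $M$ by the same comparison estimate evaluated at time $\tau<T$.
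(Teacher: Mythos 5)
Your Step 1 (two-sided pointwise bounds via comparison with spatially homogeneous ODE barriers, giving the $(T-t)^{-(q-1)}$ rates uniformly in $M$) is the same as the paper's, although your barrier choices are oriented the wrong way: for the \emph{upper} bound one must weaken the absorption, i.e.\ replace $\eta$ by its upper bound $\Lambda$ and let $\gamma\equiv\infty$ so that the jump integral cancels $-\mu(\cZ)u$ (the paper compares with $(\Lambda,\Lambda,\infty)$), whereas your candidate with $\kappa_0$ in the denominator and the $-\mu(\cZ)\bar u$ term retained is smaller than the true drift and so is not a supersolution; symmetrically the lower barrier should use $(0,\kappa_0,0)$, not ``the largest possible coefficients''. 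This is fixable, but the directions matter for the comparison theorem you invoke.

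The genuine gap is in your passage to the limit on $[0,\tau]$. Your plan hinges on applying Proposition \ref{priori_est} / Theorem \ref{exi_uni_Lip} with terminal datum $v^M_\tau$ and on an $M$-independent maximum-principle bound for $Dv^M$ on $[0,\tau]$, and you justify both by asserting that $\|v^M_\tau\|_{L^\infty(\Omega;H^1(\cD))}$ is ``bounded uniformly in $M$ by the same comparison estimate''. Comparison is purely order-theoretic: it yields the pointwise bound \eqref{est-v-limit} but gives no control whatsoever on $Dv^M_\tau$, neither in $L^2$ nor in $L^\infty$; and the gradient bound obtained in Proposition \ref{exis_truncated_equ} via Proposition \ref{MP-general-domain-quasilinear-BSPDE} is applied to the derivative equation with terminal datum $MD\theta$, hence is \emph{not} uniform in $M$. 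Trying to rerun the maximum principle on $[0,\tau]$ is circular, since its parabolic-boundary data is exactly $Dv^M_\tau$, the quantity you lack. The paper resolves precisely this point by a time-weighting device you do not have: it sets $(\hat v^M,\hat\zeta^M)=((\tau-t)v^M,(\tau-t)\zeta^M)$, which solves a semilinear BSPDE on $[0,\tau]$ with \emph{zero} terminal condition and with coefficients controlled solely by the pointwise bounds of Step 1; it then shows $\cM^1([0,\tau]\times\cD)$-convergence of $(\hat v^M,\hat\zeta^M)$ to the unique solution $(\hat v,\hat\zeta)$ of the limiting Lipschitz BSPDE (Theorem \ref{exi_uni_Lip}), identifies $\hat v=(\tau-t)v$, and recovers $(v,\zeta)$ and the bound $Dv\in\sS^{\infty}_{w,\sF}(0,\tau_0;L^{\infty}(\cD))$ on $[0,\tau_0]$, $\tau_0<\tau$, from the limit equation rather than from uniform-in-$M$ estimates on $v^M$. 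Without this (or some substitute giving uniform control of the terminal slice in $H^1$ and of the gradients), your steps (b)--(d) do not go through as written.
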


\begin{proof}
\textbf{Step 1:} By Proposition \ref{exis_truncated_equ}, BSPDE \eqref{HJBequ_truncate} with the terminal value $M\theta$ has a unique strong solution $(v^M,\zeta^M)\in\cM^1$, and the sequence $\{v^M\}$ is increasing and thus admits a limit which we denote by $v$. 

Let $(\underline {v}^M,\underline{\zeta}^M)$ and $(\overline{v}^M,\overline{\zeta}^M)$ be the strong solutions to BSPDE \eqref{HJBequ_truncate} with $(\lambda,\eta,\gamma)$ replaced by $(0,\kappa_0,0)$  and $(\Lambda,\Lambda,\infty)$, respectively. 
Then, it is easy to check that $(\underline{\zeta}^M,\,\overline{\zeta}^M)=(0,\,0)$,
\begin{align*}
\underline{v}^M_t(y)
	&= \left( \frac{(q^*-1)|\kappa_0|^{q^*-1}\mu(\mathcal{Z})}
    		{\Big(1+\frac{(q^*-1)|\kappa_0|^{q^*-1}\mu(\mathcal{Z})}{M^{q^*-1}}\Big)e^{(q^*-1)\mu(\mathcal{Z})(T-t)}-1} \right)^{q-1}\theta(y),
\end{align*}
and $\overline{v}^M_t(y)=\Gamma^M_t \theta(y)$ with $\Gamma^M$ satisfying
\begin{align*}
\begin{cases}
-\frac{d\Gamma_t^M}{dt}=\Lambda -\frac{|\Gamma_t^M|^{q^*}}{(q^*-1)\Lambda^{q*-1}},\quad t\in[0,T),\\
\Gamma_T^M=M.
\end{cases}
\end{align*}
The comparison theorem (see Theorem \ref{comparison-thm} and Remark \ref{rmk-comparison-quadratic}) indicates that 
$\underline{v}^M\leq v^M \leq \overline{v}^M$. Letting $M$ tend to infinity gives the boundedness from below:
\begin{align}
v_t(y)\geq \frac{c_0 \theta(y)}{(T-t)^{q-1}},\quad \forall\, (t,y)\in [0,T)\times \cD, \quad \text{a.s.,} \label{eq-low-bd}
\end{align}
with the constant $c_0>0$ depending on $q,\kappa_0, \Lambda, T$ and $\mu(\cZ)$.
Moreover, there are $M_0\in\bN $ and $\eps\in (0,T)$ such that almost surely,
$$
\overline{v}^M _t(y)
	\geq 
		v^M_t(y)
		>
			\Lambda T \theta(y), \quad \text{for all }M>M_0 \text{ and } (t,y)\in [T-\eps,T]\times \cD.
$$
Notice that 
\begin{align*}
\begin{cases}
-\frac{d\left( \Gamma_t^M-\Lambda (T- t) \right)}{dt} =-\frac{\left| \Gamma_t^M-\Lambda (T-t)+\Lambda (T-t) \right|^{q^*}}{(q^*-1)\Lambda^{q*-1}},\quad t\in[T-\eps,T),\\
\Gamma_T^M=M.
\end{cases}
\end{align*}
Then over the interval $[T-\eps, T]$, whenever $M>M_0$, we have $0\leq  \Gamma_t^M-\Lambda(T-t) \leq Y_t^M$ with $Y_t^M$ satisfying:
\begin{align*}
\begin{cases}
-\frac{dY_t^M}{dt} =-\frac{\left| Y_t^M \right|^{q^*}}{(q^*-1)\Lambda^{q*-1}},\quad t\in[T-\eps,T),\\
Y_T^M=M.
\end{cases}
\end{align*}
In fact, solving for $Y_t^M$ gives
\begin{align*}
Y_t^M= \left(
		\frac{1}{\frac{T-t}{\Lambda^{q^*-1}}
						+ \frac{1}{M^{q^*-1}}   }
				\right)^{\frac{1}{q^*-1}}
	\leq \frac{\Lambda}{(T-t)^{q-1}},\quad t\in [T-\eps,T).
\end{align*}

Therefore, for $M>M_0$ and $t\in [T-\eps,T)$, we have 
\begin{align}
\Gamma_t^M\leq  \frac{\Lambda}{(T-t)^{q-1}} + \Lambda (T-t).
	\label{eq-upper-bd-eps}
\end{align}
On the other hand, over the interval $[0,T-\eps]$, it is obvious that for $M>M_0$,
\begin{align}
\Gamma_t^M \leq \Gamma^M_{T-\eps}+\Lambda (T-\eps-t), \quad \forall\, t\in [0,T-\eps]. \label{eq-uppper-bd-0}
\end{align}
Combining \eqref{eq-upper-bd-eps} and \eqref{eq-uppper-bd-0} yields that for all $M>M_0$,
\begin{align}
\Gamma_t^M \leq
	\frac{C_0}{(T-t)^{q-1}} , \quad \forall\, t\in [0,T), \label{eq-upper-bd}
\end{align}
where $C_0$ depends on $ \Lambda$  and $ T$.  This together with \eqref{eq-low-bd} finally implies that with probability 1,
\begin{align}
\frac{c_0 }{(T-t)^{q-1}}
	\leq \frac{ v_t(y)}{\theta(y)} \leq
		\frac{C_0}{(T-t)^{q-1}},\quad \forall\,(t,y)\in [0,T)\times \cD.
				\label{est-v-limit}
\end{align}

\noindent \textbf{Step 2:} For any $M$, if $(v^M,\zeta^M)$ is the strong solution of BSPDE \eqref{HJBequ_truncate}. Then,  for any $0\leq t\leq \tau<T$, $(\hat{v}^M,\hat{\zeta}^M)=((\tau-t)v^M,(\tau-t)\zeta^M)$ is the strong solution of the following equation over the time interval $[0,\tau]$:
\begin{equation*}
\left\{
\begin{split}
-d\hat{v}^M_t(y)&=\Big[\alpha D^2\hat{v}^M+\sigma D\hat{\zeta}^M+(\tau-t)\lambda \theta
-\frac{\hat{v}^M}{q^*-1} \cdot \left| \frac{v^M}{\eta\theta } \right|^{q^*-1}+v^M -\mu(\cZ) \hat v^M
\\
&\,
	+\int_{\mathcal{Z}}\frac{(\tau-t)\theta \gamma(\cdot,z) \hat v^{M}}{\left(  |(\tau-t)\theta\gamma(\cdot,z)|^{q^*-1}+| \hat v^{M}|^{q^*-1}\right)^{q-1}}\,\mu(dz)
	+f(t,y,D\hat{v}^M,\hat{v}^M,\hat{\zeta}^M)\Big] (t,y)\,dt 
	\\
&\quad\quad
-\hat{\zeta}^M_t(y)\,dW_t,\\
D\hat{v}^M_t (a)&=0,\quad t\in[0,\tau],\\
\hat{v}^M_{\tau}(y)&=0. 
\end{split}
\right.
\end{equation*}
At the same time, let $(\hat{v},\hat{\zeta})\in\cM^1([0,\tau]\times\cD)$ be the strong solution to the following BSPDE:
\begin{equation*}
\left\{
\begin{split}
-d\hat{v}_t(y)&=\Big[\alpha D^2\hat{v}+\sigma D\hat{\zeta}+(\tau-t)\lambda \theta 
-\frac{\hat{v}}{q^*-1} \left| \frac{v }{\eta \theta }\right|^{q^*-1}+v -\mu(\cZ) \hat v
\\
&\quad 
+\int_{\mathcal{Z}}\frac{(\tau-t)\theta \gamma(\cdot,z) \hat v}{\left(  |(\tau-t)\theta\gamma(\cdot,z)|^{q^*-1}+| \hat v|^{q^*-1}\right)^{q-1}}\,\mu(dz)
+f(t,y,D\hat{v},\hat{v},\hat{\zeta})\Big](t,y)\,dt\\
&\quad
 -\hat{\zeta}_t(y)\,dW_t,\\
D\hat{v}&=0, \quad t\in[0,\tau],\\
\hat{v}_{\tau}(y)&=0, 
\end{split}
\right.
\end{equation*}
with $v$ being the limit of $v^M$ in Step 1. Notice that
\begin{align*}
\frac{\hat{v}}{q^*-1} \left| \frac{v }{\eta \theta } \right|^{q^*-1}- \frac{\hat{v}^M}{q^*-1} \left| \frac{v^M}{\eta\theta } \right|^{q^*-1}
=
	\frac{\hat{v}}{q^*-1}  \left(\frac{|v|^{q^*-1} -|v^M|^{q^*-1}}{|\eta \theta |^{q^*-1}} \right)
	+ \frac{\hat v- \hat{v}^M}{q^*-1}  \left| \frac{v^M}{\eta\theta } \right|^{q^*-1},
\end{align*}
with the first term on the RHS converging to zero in $\sM^2_{\sF}(0,\tau;L^2(\cD))$. Then, in view of the boundedness estimate \eqref{est-v-limit} and with similar calculations to the proof of Proposition \ref{priori_est}, we have $$\|(\hat{v}^M-\hat{v},\hat{\zeta}^M-\hat{\zeta})\|_{\cM^1([0,\tau] \times \cD )}\rrow 0, \quad \text{as }M\rrow+\infty.$$
Therefore, $\hat{v}=(\tau-t)v,~\cP\times dy$-a.e.. Also, it is easy to check that $D\hat v \in \sS^{\infty}_{w,\sF}(0,\tau;L^{\infty}(\cD))$.

Then for any $\tau_0$ with $\tau_0<\tau<T$, and for $t\in[0,\tau_0]$,  $v=\frac{\hat{v}}{\tau-t}$ and define $\zeta:=\frac{\hat{\zeta}}{\tau-t}$. Then by the arbitrariness of $\tau$, it is easy to check: (i) $(v,\zeta)\in\cM^1([0,\tau_0]\times \cD)$ and $Dv \in \sS^{\infty}_{w,\sF}(0,\tau_0;L^{\infty}(\cD))$ for each $\tau_0<T$;  (ii) as $\tau_0 \rightarrow T^-$,  $v(\tau_0,y)\rrow +\infty$ a.s. for all $y\in\cD$; (iii)  $(v,\zeta)$ satisfies for each $t\in[0,\tau_0]$,
\begin{equation*}
\begin{split}
v_t(y)&=v_{\tau_0}(y)-\int_t^{\tau_0}\left[\alpha D^2v+\sigma^*D\zeta+\lambda \theta
-\frac{|v|^{q^*}}{(q^*-1)|\eta\theta|^{q^*-1}}-\mu(\cZ)v
++f(s,y,Dv,v,\zeta) \right.\\
& \quad\quad
+
\int_{\mathcal{Z}}\frac{\theta\gamma(\cdot,z)v}{ \left( |\theta\gamma(\cdot,z)|^{q^*-1}+ |v|^{q^*-1}\right)^{q-1}}\,\mu(dz)
\bigg](s,y)\,ds 
+\int_t^{\tau_0}\zeta_s(y)\,dW_s,\quad \bP\times dy\text{-a.e.,}
\end{split}
\end{equation*}
and 
$Dv_t(a) =0$ a.s. $\forall t\in[0,\tau_0]$.  Therefore, $(v,\zeta)$ is the strong solution to BSPDE \eqref{HJBequ_weight}. Equivalently, $(u,\psi)=(\theta^{-1} {v},\theta^{-1} {\zeta})$ is the strong solution to BSPDE \eqref{ValueFunction_u}.
\end{proof}


\subsection{Verification theorem and uniqueness of strong solution to BSPDE \eqref{ValueFunction_u}}\label{verification}
\begin{thm}\label{thm-verification}
  Let Assumption \ref{AssHJB} be satisfied and suppose that $(u,\psi)$ is a strong solution to BSPDE \eqref{ValueFunction_u} that satisfies $D(\theta u) \in \sS^{\infty}_{w,\sF}(0,\tau;L^{\infty}(\cD))$ for $\tau\in[0,T)$,
\begin{equation} \label{ABC}
  	({\theta}u,{\theta}\psi)1_{[0,t]}\in\mathcal{H}^1([0,t]\times\cD),\quad t\in(0,T),
\end{equation}	
and
\begin{equation} \label{eq-thm-u}
    \frac{c_0}{(T-t)^{q-1}}\leq u_t(y) \leq \frac{c_1}{(T-t)^{q-1}},\quad \text{a.s. } \forall (t,y)\in [0,T)\times \cD, 
\end{equation}
with $c_0$ and $c_1$ being two  positive constants. Then, 
\begin{equation*}
    V(t,y,x):=u_t(y)|x|^q,\quad (t,x,y)\in[0,T]\times \bR\times \cD,
\end{equation*}
coincides with the value function of \eqref{value-func}. Moreover, the optimal (feedback) control is given by
\begin{equation} \label{eq-thm-control}
  \left(\xi^{*}_t,\, \rho^*_t(z)\right)= \left(\frac{\left|u_t(y_t)\right|^{q^*-1} x_t}{\left| \eta_t(y_t)\right|^{q^*-1}},\, \frac{\left|u_t(y_t)\right|^{q^*-1}x_{t-}}{\left|\gamma_t(y_t,z)\right|^{q^*-1}+\left|u_t(y_t)\right|^{q^*-1}}   \right),
  \quad \text{for }t\in[0,T).
\end{equation}
\end{thm}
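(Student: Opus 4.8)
The plan is to establish the two bounds $V_t(x,y)\le u_t(y)|x|^q$ and $V_t(x,y)\ge u_t(y)|x|^q$ simultaneously, by computing the dynamics of the process $M_s:=u_s(y^{t,y}_s)\,|x^{t,x,\xi,\rho}_s|^q$ on $[t,T)$ and reading off from them both a verification inequality and the optimal feedback. The first point is that Theorem \ref{thm-ito-wentzell} cannot be applied to $u$ directly: under \eqref{ABC}--\eqref{eq-thm-u} the gradient $Du$ is controlled only on compact subdomains of $\cD$ and on $[0,\tau]$ for each $\tau<T$, not uniformly on $\cD$, so the boundedness hypothesis \eqref{eq-bd-dphi} fails for $u$ --- this is precisely the purpose of the weight $\theta$. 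So first I would fix $\tau\in(0,T)$, rewrite the weighted equation \eqref{HJBequ_weight} for $v:=\theta u$ on $[0,\tau]$ as a \emph{linear} Neumann BSPDE by freezing the semilinear terms into a source $g$, which lies in $\sL^2_{\sF}(0,\tau;H^{1,2}(\cD))$ thanks to \eqref{ABC}, \eqref{eq-thm-u} and $D(\theta u)\in\sS^\infty_{w,\sF}(0,\tau;L^\infty(\cD))$, and apply Theorem \ref{thm-ito-wentzell} to obtain, from \eqref{eq-ito-kunita}, the semimartingale decomposition of $r\mapsto v_r(y^{t,y}_r)$ on $[t,\tau]$. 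Since $\theta$ is deterministic, smooth, positive along the flow and satisfies $\theta'(a)=0$, It\^o's quotient rule applied to $u_r(y^{t,y}_r)=v_r(y^{t,y}_r)/\theta(y^{t,y}_r)$ then produces the dynamics of $u_r(y^{t,y}_r)$ with no local-time contribution --- the reflection term drops out because $D(\theta u)_r(a)=\theta'(a)u_r(a)+\theta(a)Du_r(a)=0$ by the Neumann condition --- while the terms $\alpha D^2u$, $\beta Du$ and $\sigma^*D\psi$ are absorbed by the It\^o correction of $y^{t,y}$, leaving the drift of $r\mapsto u_r(y^{t,y}_r)$ equal to
\[
-\lambda_r(y_r)+\frac{|u_r(y_r)|^{q^*}}{(q^*-1)|\eta_r(y_r)|^{q^*-1}}+\mu(\cZ)\,u_r(y_r)-\int_{\cZ}\frac{\gamma_r(y_r,z)\,u_r(y_r)}{(|\gamma_r(y_r,z)|^{q^*-1}+|u_r(y_r)|^{q^*-1})^{q-1}}\,\mu(dz).
\]

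Next I would combine this with It\^o's formula for $|x_s|^q$, whose predictable finite-variation part equals $\bigl(-q|x_s|^{q-2}x_s\xi_s+\int_{\cZ}(|x_s-\rho_s(z)|^q-|x_s|^q)\mu(dz)\bigr)\,ds$, and add the running-cost integrand $\eta_s(y_s)|\xi_s|^q+\lambda_s(y_s)|x_s|^q+\int_{\cZ}\gamma_s(y_s,z)|\rho_s(z)|^q\mu(dz)$; the $\lambda_s(y_s)|x_s|^q$ and $\mu(\cZ)u_s(y_s)|x_s|^q$ contributions cancel, and the drift of $M_s$ plus the running cost collapses to
\[
\Bigl[\eta_s(y_s)|\xi_s|^q-q\,u_s(y_s)|x_s|^{q-2}x_s\xi_s+\tfrac{|u_s(y_s)|^{q^*}|x_s|^q}{(q^*-1)|\eta_s(y_s)|^{q^*-1}}\Bigr]+\int_{\cZ}\Bigl[\gamma_s(y_s,z)|\rho_s(z)|^q+u_s(y_s)|x_s-\rho_s(z)|^q-\tfrac{u_s(y_s)\gamma_s(y_s,z)|x_s|^q}{(|\gamma_s(y_s,z)|^{q^*-1}+|u_s(y_s)|^{q^*-1})^{q-1}}\Bigr]\mu(dz).
\]
A convexity (Fenchel/Young) computation then shows each bracket is nonnegative: the subtracted ``constant'' terms are exactly the Legendre transforms of $\xi\mapsto\eta|\xi|^q$ and of $\rho\mapsto\gamma|\rho|^q+u|x-\rho|^q$, so the first bracket vanishes precisely at $\xi=|u|^{q^*-1}x/|\eta|^{q^*-1}$ and the integrand of the second at $\rho(z)=|u|^{q^*-1}x/(|\gamma(z)|^{q^*-1}+|u|^{q^*-1})$, i.e.\ at the feedback pair \eqref{eq-thm-control}. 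Consequently $N_s:=M_s+\int_t^s(\text{cost integrand})\,dr$ is a submartingale on $[t,\tau]$ for every admissible control and a genuine martingale for the feedback control; the martingale property and the required integrability of the driving $W$-, $B$- and compensated-Poisson integrals on $[t,\tau]$ follow, after a standard localization, from $(\theta u,\theta\psi)\in\cM^1([0,\tau]\times\cD)$, the gradient bound, \eqref{eq-thm-u}, and the Sobolev embedding of $H^1(\cD)$ into continuous functions (used to evaluate $\psi_s,Du_s$ along the flow).

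It then remains to let $\tau\uparrow T$. For an arbitrary admissible $(\xi,\rho)$ one may assume $J_t(x,y;\xi,\rho)<\infty$ a.s.\ (otherwise $V_t\ge u_t|x|^q$ is trivial), so the submartingale inequality gives $u_t(y)|x|^q\le E[M_\tau\mid\bar\sF_t]+E[\int_t^\tau(\text{cost integrand})\,dr\mid\bar\sF_t]$ for all $\tau<T$. One then argues $E[M_\tau\mid\bar\sF_t]\to0$ as $\tau\uparrow T$: using $x_T=0$ one writes $x_\tau=\int_\tau^T\xi_s\,ds+\int_\tau^T\!\int_{\cZ}\rho_s(z)\pi(dz,ds)$, and then the upper bound $u_\tau(y_\tau)\le c_1(T-\tau)^{1-q}$ combined with H\"older's inequality, $\eta\ge\kappa_0$ and the a.s.\ finiteness of $\int_t^T\eta_s|\xi_s|^q\,ds$ controls the absolutely-continuous part, while the block-trade part of $x_\tau$ vanishes once $\tau$ passes the (a.s.\ finite, since $\mu(\cZ)<\infty$) time of the last jump of $\pi$; passing to the limit yields $u_t(y)|x|^q\le J_t(x,y;\xi,\rho)$, hence $V_t\ge u_t|x|^q$. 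Conversely, inserting the feedback \eqref{eq-thm-control} into \eqref{controlled-states} turns it into a linear jump SDE for $x^*$ whose mean-reversion rate $a_s=(u_s(y_s)/\eta_s(y_s))^{q^*-1}\ge c_0^{q^*-1}\Lambda^{1-q^*}(T-s)^{-1}$ drives $|x^*_s|\to0$ as $s\to T$, so $x^*_T=0$, the control is admissible, $N_s$ is a martingale along it, and $E[M^*_\tau\mid\bar\sF_t]\to0$ by the same estimate, whence $J_t(x,y;\xi^*,\rho^*)=u_t(y)|x|^q$. Combining the two bounds, $V_t(x,y)=u_t(y)|x|^q$ and $(\xi^*,\rho^*)$ is optimal.

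The step I expect to be the main obstacle is the transversality limit $\lim_{\tau\to T}E[M_\tau\mid\bar\sF_t]=0$: it is exactly here that the singular terminal value $u_T\equiv\infty$ has to be reconciled with the state constraint $x_T=0$, and the argument genuinely uses the \emph{sharp} two-sided bound \eqref{eq-thm-u} on $u$ (mere finiteness away from $T$ would not suffice), the $q$th-power/H\"older structure of the cost, and the finiteness of $\mu(\cZ)$ to separately dominate the absolutely-continuous and the block-trade contributions to $x_\tau$; the rest of the argument (the It\^o--Wentzell reduction via the weighted equation and the completion of the square) is comparatively routine once the right weighted formulation is in place.
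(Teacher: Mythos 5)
Your overall route coincides with the paper's own proof: pass to the weighted pair $(v,\zeta)=(\theta u,\theta\psi)$ on $[0,\tau]$, $\tau<T$, invoke the FBSDE--BSPDE link of Theorem \ref{thm-ito-wentzell} (the local time being killed by $D(\theta u)_t(a)=0$), recover the dynamics of $u_r(y^{t,y}_r)$ by It\^o's formula with $\theta^{-1}$, combine with It\^o for $|x_s|^q$, and close the square via Fenchel--Young so that the drift plus running cost is nonnegative and vanishes exactly at the feedback \eqref{eq-thm-control}; this is precisely Section 5.2 of the paper.

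The genuine gap is in the transversality step $\lim_{\tau\uparrow T}E\big[u_\tau(y^{0,y}_\tau)|x_\tau|^q\,\big|\,\bar\sF_t\big]=0$, the step you yourself flag. Your H\"older bound $\big|\int_\tau^T\xi_s\,ds\big|^q\le (T-\tau)^{q-1}\int_\tau^T|\xi_s|^q\,ds$ does control the absolutely continuous part, but for the block-trade part you only argue that $\int_\tau^T\!\int_{\cZ}\rho_s(z)\,\pi(dz,ds)$ vanishes a.s.\ once $\tau$ passes the last jump time. That yields a.s.\ convergence $M_\tau\to0$, not convergence of the conditional expectations: since $M_\tau\ge0$, Fatou only gives $\liminf_\tau E[M_\tau\mid\bar\sF_t]\ge0$, and to get $0$ you need uniform integrability or a domination, which you do not supply and which is not automatic. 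Indeed $u_\tau\le c_1(T-\tau)^{1-q}$ blows up, while the conditional $q$-th moment of the jump contribution over $(\tau,T]$ decays with no rate in general (one only knows $E[\int_\tau^T\!\int_{\cZ}|\rho_s(z)|^q\mu(dz)ds]=o(1)$), so the product need not tend to zero; for $q\ge2$ one can make it diverge along admissible controls with large late block trades. The paper closes exactly this hole with Lemma \ref{lem-admissible-control} (the analogue of \cite[Lemma 3.4]{GraeweHorstQui13}): without loss of generality the state process is monotone and satisfies \eqref{est-lem-adm-control}, i.e.\ $|x_\tau|^q\le C(T-\tau)^{q-1}E[\int_\tau^T|\xi_s|^q ds\mid\bar\sF_\tau]$ --- an estimate that controls the jump part as well (roughly because, conditionally, with probability $e^{-\mu(\cZ)(T-\tau)}>0$ no jump occurs on $(\tau,T]$, and on that event the whole remaining position must be liquidated by $\xi$ alone) --- and then \eqref{esti-001} is immediate. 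Your proof needs this reduction/estimate or an equivalent uniform-integrability input. The same estimate is also what the paper uses for the candidate optimum: your explicit decay $|x^*_\tau|\le|x|\,((T-\tau)/T)^{|c_0/\Lambda|^{q^*-1}}$ beats $u_\tau\sim(T-\tau)^{1-q}$ only if $q\,|c_0/\Lambda|^{q^*-1}>q-1$, which is not guaranteed. Finally, admissibility of $(\xi^*,\rho^*)$ requires $\xi^*\in\cL^q_{\bar\sF}(0,T;\bR)$, which you assert from $x^*_T=0$ but do not prove; the paper derives it from the martingale identity on $[0,\tau]$ (the cost on $[0,\tau]$ is bounded by $u_0(y)|x|^q$ uniformly in $\tau$) and monotone convergence as $\tau\uparrow T$.
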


In view of \eqref{ABC}, the regularity of $({\theta}u,{\theta}\psi)$ is not high enough to apply the generalized It\^o-Kunita-Wentzell formula (\cite[Lemma 4.1]{Bayraktar-Qiu_2017}) for the compositions of random fields and reflected SDEs. In fact, to the best of our knowledge, no generalized It\^o-Kunita-Wentzell formula is available for the strong solutions of backward SPDEs like \eqref{ValueFunction_u} even for the cases without singular terminal condition. The proof of Theorem \ref{thm-verification} is instead based on 
the representation relationship between FBSDEs and backward SPDEs in Theorem \ref{thm-ito-wentzell}.

The following result is similar to \cite[Lemma 3.4]{GraeweHorstQui13}, and its proof is omitted. It states that the optimal control lies in the set $\mathscr{A}$ for which the corresponding state process is monotone.    
\begin{lem}\label{lem-admissible-control}
  Given any admissible control pair $(\xi,\rho)\in \cL^q_{\bar{\sF}}(0,T)\times \cL^{q}_{\bar{\sF}}(0,T;L^q(\mathcal{Z}))$, we may find a corresponding admissible control pair $(\hat{\xi},\hat{\rho})\in \cL^q_{\bar{\sF}}(0,T)\times \cL^{q}_{\bar{\sF}}(0,T;L^q(\mathcal{Z}))$ satisfying: \begin{enumerate}[(i)] 
  \item the cost associated to $(\hat\xi,\hat\rho)$ is {no} more than that of $(\xi,\rho)$; 
  \item  the corresponding state process
  $x^{0,x;\hat{\xi},\hat{\rho}}$ is a.s. monotone;
\item it holds that for each $t\in[0,T]$,
  \begin{align}
E\left[\left.\sup_{s\in[t,T]}|x_s^{0,x;\hat{\xi},\hat{\rho}}|^q\right|\bar{\sF}_t\right]
    =
   |x_t^{0,x;\hat{\xi},\hat{\rho}}|^q
    \leq\, C (T-t)^{q-1}E\left[\left.\int_t^T|\hat{\xi}_s|^q\,ds
    \right|\bar{\sF}_t\right], \label{est-lem-adm-control}
  \end{align}
  where  the constant $C>0$ is independent of the initial data {$(0,x)$, terminal time $T$} and the control pair $( \hat{\xi},\hat{\rho})$.
  \end{enumerate}
\end{lem}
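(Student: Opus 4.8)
The plan is to reduce to the case $x\ge 0$ — the case $x<0$ follows from the reflection $(\xi,\rho,x)\mapsto(-\xi,-\rho,-x)$, under which \eqref{controlled-states} and the cost functional in \eqref{OCP} are invariant, and the case $x=0$ is settled by $(\hat\xi,\hat\rho)\equiv(0,0)$ — and then to replace $(\xi,\rho)$ by its \emph{monotone envelope}. Write $A_t=\int_0^t\xi_s\,ds+\int_0^t\int_{\cZ}\rho_s(z)\,\pi(dz,ds)$ for the cumulative net liquidation, so that $x^{0,x;\xi,\rho}_t=x-A_t$, $A_0=0$, $A_T=x$. Put $\bar M_t=\sup_{0\le s\le t}A_s$ and $M_t=\bar M_t\wedge x$; then $M$ is $(\bar\sF_t)$-adapted, nondecreasing, c\`adl\`ag, $M_0=0$, and $M_T=x$ since $\bar M_T\ge A_T=x$. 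Define the candidate state $\hat x_t:=x-M_t=(x-\bar M_t)^+\in[0,x]$, which is nonincreasing with $\hat x_T=0$, and let $(\hat\xi,\hat\rho)$ be the density of the absolutely continuous part of $dM$ and the field recording its jumps, respectively:
\[
\hat\xi_s=\xi_s^+\,\mathbf 1_{\{A_{s-}=\bar M_{s-}<x\}},\qquad
\hat\rho_s(z)=\Big[\big(\bar M_{s-}\vee(A_{s-}+\rho_s(z))\big)\wedge x\Big]-\big(\bar M_{s-}\wedge x\big).
\]
Since $A_{s-},\bar M_{s-}$ are predictable and $\rho$ is predictable in $(s,z)$, $(\hat\xi,\hat\rho)$ is $\bar\sP$- resp.\ $\bar\sP\times\mathscr Z$-measurable; using $\bar M_{s-}\ge A_{s-}$ one verifies $0\le\hat\xi_s\le|\xi_s|$ and $0\le\hat\rho_s(z)\le|\rho_s(z)|$ pointwise (the increments of $M$ are dominated by the nonnegative increments of $A$), so $(\hat\xi,\hat\rho)$ inherits the $\cL^q$-integrability of $(\xi,\rho)$; moreover $M_t=\int_0^t\hat\xi_s\,ds+\int_0^t\int_{\cZ}\hat\rho_s(z)\,\pi(dz,ds)$, whence $x^{0,x;\hat\xi,\hat\rho}=\hat x$ and, as $\hat x_T=0$, $(\hat\xi,\hat\rho)\in\mathscr A$.

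The three assertions then follow. For (i), the pointwise domination above, together with $\hat x_t=(x-\bar M_t)^+\le(x-A_t)^+\le|x^{0,x;\xi,\rho}_t|$ and $\eta,\lambda,\gamma\ge 0$ (and the fact that the factor process $y^{0,y}$ is the same for both controls), gives the termwise domination of the three integrands in \eqref{OCP}, so the cost does not increase. For (ii), $\hat x=(x-\bar M_\cdot)^+$ is nonincreasing and nonnegative, hence $\sup_{s\in[t,T]}|\hat x_s|=|\hat x_t|$ a.s., which is the first identity in \eqref{est-lem-adm-control}. For the quantitative bound, use the representation $\hat x_t=M_T-M_t=\int_t^T\hat\xi_s\,ds+\int_t^T\int_{\cZ}\hat\rho_s(z)\,\pi(dz,ds)$, in which both integrands are nonnegative; taking $\bar\sF_t$-conditional expectations and compensating the Poisson integral by $\mu(dz)\,ds$,
\[
\hat x_t=E\Big[\int_t^T\hat\xi_s\,ds+\int_t^T\int_{\cZ}\hat\rho_s(z)\,\mu(dz)\,ds\ \Big|\ \bar\sF_t\Big],
\]
and applying H\"older's inequality on $[t,T]$ (resp.\ on $[t,T]\times\cZ$ with the finite measure $ds\otimes\mu$, with exponents $q$ and $q/(q-1)$) inside the conditional expectation, followed by Jensen's inequality for the convex map $r\mapsto r^q$, yields a bound of the type \eqref{est-lem-adm-control} with a constant depending only on $q$ and $\mu(\cZ)$.

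The step I expect to be delicate is the bookkeeping at the block-trade times: one has to confirm that the running-maximum modification of the jump part is a genuine predictable field dominated by $\rho^+$ and that $M$ decomposes pathwise into its absolutely continuous and pure-jump parts exactly as claimed (a Skorokhod-type argument for the running maximum $\bar M$ of the finite-variation process $A$), after which the cost comparison and the estimate are routine, as in \cite[Lemma 3.4]{GraeweHorstQui13}.
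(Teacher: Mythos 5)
Your monotone-envelope construction is sound and is essentially the intended route (the paper omits the proof, referring to \cite[Lemma 3.4]{GraeweHorstQui13}, whose argument is precisely a truncation of the cumulative liquidation so that the state becomes monotone): the capped running maximum $M=\bar M\wedge x$, the identification of $(\hat\xi,\hat\rho)$ as its absolutely continuous density and jump field, their predictability, the pointwise dominations $0\le\hat\xi\le|\xi|$, $0\le\hat\rho\le|\rho|$, $|\hat x_t|\le|x^{0,x;\xi,\rho}_t|$, and hence admissibility, claim (i), claim (ii) and the first equality in \eqref{est-lem-adm-control} are all correct (the reflection reduction to $x\ge0$ is also fine).

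There is, however, a genuine gap in the last step. After compensating the Poisson integral and applying H\"older and Jensen, what you obtain is
\begin{equation*}
|\hat x_t|^q\ \le\ C(T-t)^{q-1}\,E\!\left[\int_t^T|\hat\xi_s|^q\,ds+\int_t^T\!\!\int_{\cZ}|\hat\rho_s(z)|^q\,\mu(dz)\,ds\ \Big|\ \bar\sF_t\right],
\end{equation*}
which is \emph{not} \eqref{est-lem-adm-control}: the stated estimate has only $\hat\xi$ on the right, and the extra $\hat\rho$-term cannot in general be absorbed into the $\hat\xi$-term, so calling this ``a bound of the type \eqref{est-lem-adm-control}'' overstates what the computation gives. (Your weaker bound would actually still suffice for the only place the lemma is used, namely \eqref{esti-001}, since $\hat\rho\in\cL^q_{\bar\sF}(0,T;L^q(\cZ))$ makes the additional term vanish as $\tau\uparrow T$; but it does not prove the lemma as stated.) The missing idea is to condition on the no-jump event: since $\mu(\cZ)<\infty$, the event $N_t:=\{\pi((t,T]\times\cZ)=0\}$ is independent of $\bar\sF_t$ with $P(N_t\mid\bar\sF_t)=e^{-\mu(\cZ)(T-t)}>0$, and on $N_t$ the terminal constraint forces $\hat x_t=\int_t^T\hat\xi_s\,ds$ pathwise. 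Because $\hat x_t$ is $\bar\sF_t$-measurable,
\begin{equation*}
|\hat x_t|^q\,e^{-\mu(\cZ)(T-t)}=E\!\left[\mathbf{1}_{N_t}\Big(\int_t^T\hat\xi_s\,ds\Big)^{q}\,\Big|\,\bar\sF_t\right]\ \le\ (T-t)^{q-1}E\!\left[\int_t^T|\hat\xi_s|^q\,ds\,\Big|\,\bar\sF_t\right],
\end{equation*}
which is exactly \eqref{est-lem-adm-control} with $C=e^{\mu(\cZ)(T-t)}\le e^{\mu(\cZ)T}$ (note this constant depends on $q$, $\mu(\cZ)$ and the horizon, not ``only on $q$ and $\mu(\cZ)$'' as you claim). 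With this replacement of your final estimate, the proposal is complete.
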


The proof of Lemma \ref{lem-admissible-control} is similar to that of \cite[Lemma 3.4]{GraeweHorstQui13}, so we omit it here.
%

\begin{proof}[Proof of Theorem \ref{thm-verification}]
First, we check the admissibility of the control process $(\xi^*,\rho*)$. We compute the state process
\begin{equation*} 
    x_{t}^*:=x
     \exp\left(-\int_0^t\frac{|u_s(y_s^{0,y})|^{q^*-1}}{|\eta_s(y_s^{0,y})|^{q^*-1}}\,ds\right)
    \prod_{0<s\leq t}\left\{1-\int_{Z}\frac{|u_s(y_s^{0,y})|^{q^*-1}}{|\gamma_s(y_s^{0,y},z)|^{q^*-1}+|u_s(y_s^{0,y})|^{q^*-1}}\,\pi(dz,\{s\})  \right\}
    .
\end{equation*}
  Obviously, $x_{\cdot}^*$ is monotonic and as $t\uparrow T$,
  \begin{align*}
  |x_{t}^*|
  \leq 
  	\, \left|x \, \textrm{exp}\left\{-\int_0^t \left| \frac{u_s(y_s^{0,y})}{\eta_s(y_s^{0,y})}\right|^{q^*-1}\,ds\right\}  \right|
  &\leq
  	\,|x|\, \textrm{exp}\left\{-\int_0^t \frac{|c_0|^{q^*-1}}{|\Lambda(T-s)^{q-1}|^{q^*-1}}\,ds \right\}
  \\
  &=|x|\left(\frac{T-t}{T}\right)^{\left|\frac{c_0}{\Lambda}\right|^{q^*-1} }\longrightarrow 0
 \end{align*}
as $t\uparrow T.$   In view of the definition of $(\xi^*,\rho^*)$, we see directly $\rho^*\in \cL^{q}_{\bar{\sF}}(0,T;L^q(\mathcal{Z}))$, and $\xi^*\in \cL^q_{\bar{\sF}}(0,\tau)$ for any $\tau\in(0,T)$. In the next step, we shall further confirm $\xi^*\in \sL^q_{\bar\sF}(0,T;\bR)$.

	Second, we note that  the BSPDE \eqref{ValueFunction_u} is equivalent to the BSPDE \eqref{HJBequ_weight} and that by Sobolev's embedding theorem, $u_t(y)$ is a.s. continuous with respect to $(t,y)\in[0,T)\times\bR^d$. If we restrict the BSPDE \eqref{HJBequ_weight} onto the time interval $[0,\tau]$ with $\tau\in (0,T)$, taking $\theta u_\tau(y)$ as the terminal condition, then the BSPDE \eqref{HJBequ_weight} satisfies the assumptions of Theorem \ref{exi_uni_Lip} on time interval $[0,\tau]$, due to \eqref{eq-thm-u} and the presence of the weight function $\theta$. Thus, the pair $(v,\zeta):=(\theta u,\,\theta \psi)$ turns out to be the unique strong solution  to the following BSPDE:
	\begin{equation}\label{HJBequ_weight-tau}
\left\{
\begin{split}
-dv_t(y)&=\left[\alpha D^2v+\sigma^*D\zeta+\lambda\theta
-\frac{|v|^{q^*}}{(q^*-1)|\eta\theta|^{q^*-1}}
-\mu(\cZ)v 
+f(t,y,Dv,v,\zeta) 
					\right.\\
&\quad\quad
+\int_{\mathcal{Z}}\frac{\theta \gamma(\cdot,z) v}{\left(|\theta\gamma(\cdot,z)|^{q^*-1}+|v|^{q^*-1}\right)^{q-1}}\,\mu(dz)	\bigg](t,y)\,dt
-\zeta_t(y)\,dW_t, \\
D v_t(a) &=0, \quad \text{for }t\in[0,\tau],\\
v_{\tau}(y)&=\theta(y) u_{\tau}(y), \quad y\in\bR.
\end{split}
\right.
\end{equation}

By Theorem \ref{thm-ito-wentzell}, we have following BSDE representation:
\begin{align*}
  &-d(\theta u_t)(y_t^{0,y})\\
  &= \,\bigg[
  -\beta D(\theta u)
  +\lambda \theta
  -\frac{|\theta u|^{q^*}}{(q^*-1)|\eta\theta|^{q^*-1}}
-\mu(\cZ)\theta u
  +f(t,y_t^{0,y},D(\theta u)  , \theta u ,\theta \psi)
  \\
&\quad\quad
+\int_{\mathcal{Z}}\frac{\theta \gamma(\cdot,z) \theta u}{\left(|\theta\gamma(\cdot,z)|^{q^*-1}+|\theta u|^{q^*-1}\right)^{q-1}}\,\mu(dz)
\bigg](t,y_t^{0,y})\,dt
	 \\
&\quad\quad
 -\left(D(\theta u) \bar \sigma\right)(t,y_t^{0,y})\,dB_t
	 -\left( \theta\psi+D(\theta u) \sigma  \right)(t,y_t^{0,y})\,dW_t,\quad t\in [0,T).
\end{align*}
 Applying the standard It\^o formula, we obtain
\begin{align*}
  d\theta^{-1}(y_t^{0,y})
  =\bigg[ \alpha  D^2\theta^{-1}
  +\beta  D\theta^{-1}
  \bigg](t, y_t^{0,y})\,dt
  +\left((D\theta^{-1})\sigma\right)(t,y_t^{0,y})\,dW_t
  +   +\left( (D\theta^{-1})\bar\sigma\right)(t,y_t^{0,y})\,dB_t
\end{align*}
and further,
\begin{align*}
  -d u_t(y_t^{0,y})
  = &\,
  \left[ \lambda
  -\frac{|u|^{q^*}}{(q^*-1)|\eta |^{q^*-1}}
-\mu(\cZ)u
  +\int_{\mathcal{Z}}\frac{ \gamma(\cdot,z) u}{\left(|\gamma(\cdot,z)|^{q^*-1}+|u|^{q^*-1}\right)^{q-1}}\,\mu(dz)
  			\right](t,y_t^{0,y})
  \,dt
  \\
  &\,
     - \left[ Du \bar\sigma   \right](t,y_t^{0,y})\,dB_t
  -\left[ \mu+Du \sigma   \right](t,y_t^{0,y})\,dW_t
  ,\quad t\in [0,T).
\end{align*}
{Then the stochastic differential equation for $u_t(y_t^{0,y})|x_t^{0,x;\xi,\rho}|^q$ follows immediately from an application of the standard It\^o formula again.}
Then, for every $(x,y)\in\bR\times \cD$ and each admissible control $(\xi,\rho)$ which, by Lemma \ref{lem-admissible-control}, drives a monotone process $x^{0,x;\xi,\rho}$, we have
\begin{align}
	&u_t(y_t^{0,y})\big|x_t^{0,x;\xi,\rho}\big|^q- E\left[ \left. 
		u_r(y_r^{0,y})\big|x_r^{0,x;\xi,\rho}\big|^q \right|\bar{\sF}_t\right] 
			\nonumber\\
  	&=E\left[ 
		\int_t^r \bigg(\lambda_s(y_s^{0,y})|x_s^{0,x;\xi,\rho}|^q
		-\frac{|u_s(y_s^{0,y})|^{q^*} |x_s^{0,x;\xi,\rho}|^q}{(q^*-1)|\eta_s(y_s^{0,y})|^{q^*-1}}
		+ q u_s(y_s^{0,y})\left|x_s^{0,x;\xi,\rho}\right|^{q-2}x_s^{0,x;\xi,\rho}  \xi_s \right.
		\nonumber\\
	&\quad\quad
		-\mu(\cZ)u_s(y_s^{0,y})|x_s^{0,x;\xi,\rho}|^q
		+u_s(y_s^{0,y})\int_{\mathcal{Z}}\left( \left|x_s^{0,x;\xi,\rho} -\rho_s(y_s^{0,y},z) \right|^q- \left|x_s^{0,x;\xi,\rho}\right|^q 	\right)\,\mu(dz)   
		\nonumber\\
	 	&\qquad
		\left. \left.
		 +\int_{\mathcal{Z}}\frac{\gamma_s(y_s^{0,y},z) u_s(y_s^{0,y})   |x_s^{0,x;\xi,\rho}|^q}{\left( |\gamma_s(y_s^{0,y},z)|^{q^*-1} +|u_s(y_s^{0,y})|^{q^*-1}\right) ^{q-1}}\mu(dz) \bigg)\,ds
			\right|\bar{\sF}_t\right]
		\nonumber\\
		&
		\leq E\left[ \left.
		 \int_t^r\!\! \left(
		 	 \eta_s(y_s^{0,y})|\xi_s|^q +\lambda_s(y_s^{0,y})\big| x_s^{0,x;\xi,\rho}  \big|^q 
			 +\int_{\mathcal{Z}} \gamma_s(y_s^{0,y},z) |\rho_s(z)|^q\,\mu(dz)
			    \right) ds
			    \right|\bar{\sF}_t\right],\text{ a.s.,} \label{eq1-prf-thm-verif}
	\end{align}
  for all $0\leq t\leq \tau <T$. Notice that, the  control pair $(\xi^*,\rho^*)$ makes the above inequality with equality; in particular, we have
   \begin{align*}
    &u_0(y)|x|^q \geq u_0(y)|x|^q - E\left[ u_{\tau}(y_{\tau}^{0,y})|x_{\tau}^{0,x;\xi^*,\rho^*}|^q \right]
    \\
    &=
    E\left[ \int_0^{\tau} \!\!\left( \eta_s(y_s^{0,y})|\xi^*_s|^q
    +\lambda_s(y_s^{0,y})\big| x_s^{0,x;\xi^*,\rho^*}  \big|^q
    +\int_{\mathcal{Z}} \gamma_s(y_s^{0,y},z) |\rho^*_s(z)|^q\,\mu(dz)
     \right)ds  \right], 
\end{align*}
  which indicates that $\xi^*\in \sL^q_{\bar\sF}(0,T;\bR)$ by letting $\tau$ go to $T$. Thus, the control pair $(\xi^*,\rho^*)$ is admissible.
  
  As $\tau\rightarrow T$, by \eqref{eq-thm-u} and \eqref{est-lem-adm-control}, it turns out that
  \begin{align}
\lim_{{\tau}\uparrow T} \bigg|E\left[\left. u_{\tau}(y_{\tau}^{0,y})\left|x_{\tau}^{0,x;\xi,\rho}\right|^q \right|\bar{\sF}_t\right]\bigg|
	\leq
		\lim_{{\tau}\uparrow T} \frac{c_1}{(T-{\tau})^{q-1}} \, C (T-{\tau})^{q-1}E\left[\left.\int_{\tau}^T\!\!\left|\xi_s\right|^q\,ds
    \right|\bar{\sF}_{t}\right] = 0,\label{esti-001}
  \end{align}
which implies that  
  \begin{align*} \label{eq-thm-prf-1}
    u_0(y)|x |^q
    \leq J(0,x ,y ;\xi,\rho) \quad \text{for any admissible } \quad (\xi,\rho),
  \end{align*}
and that the control $(\xi^*,\rho^*)$ satisfies the above inequality with equality and is thus optimal.
\end{proof}

In view of Theorem \ref{thm-verification} and its proof above, we see that the uniqueness of the strong solution $(u,\psi)$ to BSPDE  \eqref{ValueFunction_u} requires the specific domination relation \eqref{eq-thm-u}. In fact, such a requirement may be dropped as below.

\begin{thm}\label{thm-minimal}
Under Assumption \ref{AssHJB}, for the solution $(u,\psi)$ to BSPDE \eqref{ValueFunction_u} constructed in the proof of Theorem \ref{thm-existence}, if $(\tilde{u},\tilde{\psi})$ is another strong solution of \eqref{ValueFunction_u} satisfying
 $$
(\theta\tilde{u},\theta\tilde{\psi}+\sigma^*D(\theta\tilde{\psi}))\in
  \cH^1([0,t]\times \cD)
  \quad\text{and } 
     \tilde{u},\, D(\theta \tilde{u})\in \sS^{\infty}_{w,\sF}(0,t;L^{\infty}(\cO))
     ,\quad \forall\,t\in(0,T),
 $$
 then a.s. for {all} $t\in [0,T)$,
  $  \tilde{u}_t= u_t $ a.e. in $\bR^d$.
\end{thm}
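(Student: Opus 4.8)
The plan is to show that the random field $(t,y,x)\mapsto\tilde u_t(y)|x|^q$ is, like $u_t(y)|x|^q$, the value function $V_t(x,y)$ of \eqref{value-func}; since $u$ has that property by Theorem \ref{thm-existence} combined with Theorem \ref{thm-verification}, this forces $\tilde u_t(y)=u_t(y)$. The only hypothesis of Theorem \ref{thm-verification} not already handed to us for $(\tilde u,\tilde\psi)$ is the quantitative two-sided estimate \eqref{eq-thm-u}; the regularity postulated on $(\tilde u,\tilde\psi)$, namely $(\theta\tilde u,\theta\tilde\psi+\sigma^*D(\theta\tilde\psi))\in\cH^1([0,t]\times\cD)$ and $\tilde u,D(\theta\tilde u)\in\sS^\infty_{w,\sF}(0,t;L^\infty(\cD))$ for every $t<T$, is precisely calibrated so that Theorem \ref{thm-ito-wentzell} applies to $\theta\tilde u$ (hence, after dividing by $\theta$, to $\tilde u$) and the reflected factor process, producing for each $(t,y)$ the BSDE obeyed by $\tilde Y_s:=\tilde u_s(y^{t,y}_s)$ on $[t,T)$, exactly as in the proof of Theorem \ref{thm-verification}. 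So the real task is to establish \eqref{eq-thm-u} for an arbitrary such $\tilde u$.

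\textbf{A priori two-sided bound.} Here I would argue on the weighted equation \eqref{HJBequ_weight} satisfied by $\tilde v:=\theta\tilde u$, so that the comparison functions (constants in $y$, multiplied by $\theta$) lie in the function spaces demanded by Theorem \ref{comparison-thm}. After first recording $\tilde u\ge0$ and combining $0\le\lambda\le\Lambda$, $\kappa_0\le\eta\le\Lambda$ with the pointwise bound
\[
0\ \le\ \frac{\gamma_s(y,z)\,\tilde u}{\bigl(|\gamma_s(y,z)|^{q^*-1}+|\tilde u|^{q^*-1}\bigr)^{q-1}}\ \le\ \tilde u ,
\]
the Hamiltonian of \eqref{ValueFunction_u} is squeezed between two $y$-independent ones; after multiplying by $\theta$ and absorbing the lower-order terms the weight generates — precisely as in the construction of $\underline v^M$ and $\overline v^M$ in the proof of Theorem \ref{thm-existence} — this yields sub- and supersolutions of the shape $\underline\Gamma_t\,\theta(y)$ and $\overline\Gamma_t\,\theta(y)$ with $\underline\Gamma,\overline\Gamma$ scalar solutions of the ODEs that appear there, each blowing up at $T$ with rate $(T-t)^{-(q-1)}$. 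Applying Theorem \ref{comparison-thm} on $[t_0,\tau]$ with $0<t_0<\tau<T$ (where $\tilde u$ is essentially bounded with essentially bounded gradient, so the $q^*$-power is effectively a Lipschitz nonlinearity as in Proposition \ref{exis_truncated_equ}), with the ODE terminal datum at $\tau$ first taken to be a finite constant $N$ — admissible once $N\ge\|\tilde u_\tau\|_{L^\infty(\cD)}$ for the upper estimate, and once $N\le\essinf_\omega\inf_{y\in\cD}\tilde u_\tau(y)$ for the lower one — and then passing $N\to\infty$ and $\tau\uparrow T$, using that an ODE solution with finite terminal datum at $\tau$ lies below the one blowing up at $\tau$, which in turn converges (monotonically) to $\overline\Gamma$ (resp.\ $\underline\Gamma$) as $\tau\uparrow T$, one obtains $c_0(T-t)^{-(q-1)}\le\tilde u_t\le c_1(T-t)^{-(q-1)}$, i.e.\ \eqref{eq-thm-u}.

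\textbf{Conclusion.} With \eqref{eq-thm-u} available and \eqref{ABC} granted a fortiori, $(\tilde u,\tilde\psi)$ meets every hypothesis of Theorem \ref{thm-verification}; rerunning its proof verbatim — the Hamiltonian inequality \eqref{eq1-prf-thm-verif} and the vanishing \eqref{esti-001} invoke only the structure of the equation and the bound \eqref{eq-thm-u}, never the particular construction of $u$ — shows $\tilde u_t(y)|x|^q=V_t(x,y)$. Comparing with the identity $u_t(y)|x|^q=V_t(x,y)$ and using that $u_t(\cdot)$ and $\tilde u_t(\cdot)$ are continuous on $\cD$ by the embedding $H^1(\cD)\hookrightarrow C^{0,1/4}(\cD)$, we conclude $\tilde u_t(y)=u_t(y)$ for all $y\in\cD$ and $t\in[0,T)$, a.s. (As an internal check this is consistent with $u$ being the minimal solution: each truncated $v^M$ of \eqref{HJBequ_truncate} satisfies $v^M\le\theta\tilde u$ by comparing on $[0,\tau]$ and invoking the singular terminal condition, and combined with $\tilde u\le u$, which follows already from the upper bound of the previous step through the verification inequality, this again gives $\tilde u=u$.)

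\textbf{Main obstacle.} The heart of the matter is the a priori bound, and within it the lower bound near the terminal time: the upper bound is sturdy because $\|\tilde u_\tau\|_{L^\infty(\cD)}<\infty$ for every $\tau<T$, but the lower bound must set the singular terminal condition $\tilde u_\tau\to\infty$ against finite-terminal ODE comparison functions as $\tau\uparrow T$, i.e.\ one must show $\essinf_\omega\inf_{y\in\cD}\tilde u_\tau(y)\to\infty$, and it is exactly here that the $\sS^\infty_{w,\sF}$-regularity on compact time intervals and the continuity of $\tilde u$ are indispensable. A subsidiary technical point is verifying that the postulated regularity of $(\tilde u,\tilde\psi)$ — deliberately a little stronger than $\cH^1$ — does satisfy the hypotheses of Theorem \ref{thm-ito-wentzell} once one passes between $u$ and $\theta u$.
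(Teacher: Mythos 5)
Your overall architecture (establish the two-sided bound \eqref{eq-thm-u} for $\tilde u$, then feed $(\tilde u,\tilde\psi)$ into Theorem \ref{thm-verification} and conclude $\tilde u_t(y)|x|^q=V_t(x,y)=u_t(y)|x|^q$) is exactly the paper's, and your upper bound is essentially the paper's argument: compare $\theta\tilde u$ on $[0,T_0]$, $T_0<T$, with the spatially homogeneous supersolution $\Gamma^M_t\theta(y)$ having finite terminal datum $M\ge\|\tilde u\|_{\sS^\infty_{\sF}(0,T_0;L^\infty(\cD))}$, and use that the resulting bound \eqref{eq-upper-bd} is uniform in $(M,T_0)$ before letting $T_0\uparrow T$.

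The genuine gap is the lower bound, and you have named it yourself without closing it. Your scheme needs $\essinf_{\omega}\inf_{y\in\cD}\tilde u_\tau(y)\to\infty$ as $\tau\uparrow T$ in order to dominate the finite terminal datum $N$ of the ODE subsolution at time $\tau$. This is not a consequence of Definition \ref{defn-soltn}: the singular terminal condition is $\lim_{\tau\to T}\tilde u_\tau(y)=\infty$ pointwise in $y$ and a.s.\ in $\omega$, and on the unbounded domain $\cD=[a,\infty)$ pointwise divergence gives no uniformity in $y$, let alone an essential infimum over $\omega$; the assumed $\sS^{\infty}_{w,\sF}$ regularity on compact subintervals supplies only upper bounds, never a diverging lower bound. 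The paper resolves precisely this point by a different device: fix $(t,y)$, pass to the BSDE along the factor process via Theorem \ref{thm-ito-wentzell}, and compare $\tilde v=\theta\tilde u$ not with an ODE subsolution but with the bounded truncated solutions $v^M$ of \eqref{HJBequ_truncate}, stopping at $\tau^M=\inf\{s>t:\ \tilde v_s(y^{t,y}_s)>K_M\}$ with $K_M=\|v^M\|_{\sS^{\infty}_{\sF}(0,T;L^{\infty}(\cD))}$. Then $\tau^M<T$ a.s.\ needs only the pointwise blow-up, the terminal inequality $\tilde v_{\tau^M}(y^{t,y}_{\tau^M})\ge K_M\ge v^M_{\tau^M}(y^{t,y}_{\tau^M})$ holds by construction of the stopping time, and the comparison argument of Theorem \ref{comparison-thm} is run on the random horizon $[t,\tau^M]$ (Remark \ref{rmk-comparison-thm}). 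Letting $M\to\infty$ gives $\tilde v\ge v$, hence $\tilde u\ge u$ and the lower bound in \eqref{eq-thm-u}. Note that your parenthetical ``internal check'' ($v^M\le\theta\tilde u$ ``by comparing on $[0,\tau]$ and invoking the singular terminal condition'') suffers from the same flaw: at a fixed deterministic $\tau<T$ there is no reason for $v^M_\tau\le\theta\tilde u_\tau$, and the comparison cannot be pushed to $T$ where $\tilde u$ leaves $\cH^1$; the stopping-time localization is the missing idea that makes the lower bound (and minimality of $u$) rigorous.
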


\begin{proof} 
Denote by $(v^M,\zeta^M)\in \cM^1$ the unique solution to BSPDE \eqref{HJBequ_weight}, and by  $(v,\zeta)$ and $(u,\psi)$, respectively, the strong solutions to BSPDEs \eqref{HJBequ_weight} and \eqref{ValueFunction_u} in the proof of Theorem \ref{thm-existence}. The embedding theorem indicates that $v^M\in \sS_{\sF}^{\infty}(0,T;L^{\infty}(\cD))$ and $\tilde v \in \sS_{\sF}^{\infty}(0,t;L^{\infty}(\cD))$ for all $t\in[0,T)$.  Set
$(\tilde{v},\tilde{\zeta})\,=\,\theta\,(\tilde{u},\tilde{\psi})$, and $K_M= \|v^M\|_{\sS^{\infty}_{\sF}(0,T;L^{\infty}(\cD))}$. 
For each $(t,y)\in [0,T)\times \cD$, define
$$\tau^M=\inf\{s>t;\, \tilde{v}_s(y^{t,y}_s) > K_M  \},$$
and obviously, one has $t\leq \tau^M <T$ a.s. Then the boundedness of $\tilde v$ and $v^M$ on the interval $[t,\tau^M]$ allows us to follow the same proof for Theorem \ref{comparison-thm} with the deterministic time interval $[0,T]$ replaced by a random one $[t,\tau^M]$, and we may arrive at the following comparison relation, 
  \begin{equation}\label{eqf-thm-minimal}
    \tilde{v}_t(y)\geq v^M_t(y)  \text{ a.s.}
  \end{equation}
  Letting $M$ tend to infinity yields that  $\tilde{v}_t(y)\geq v_t(y)$ and  a.s. for all $(t,y)\in [0,T)\times \cD$. In particular, it follows from Theorem \ref{thm-verification} that 
  \begin{align}
  \tilde u_t(y) \geq   \frac{c_0}{(T-t)^{q-1}}, \quad{a.s., }\forall (t,y)\in[0,T)\times \cD. \label{relatn-lower}
  \end{align}

	In view of Theorem \ref{thm-verification}, it remains to verify that $\tilde {u}$ satisfies the condition \eqref{eq-thm-u}. The above arguments have given the lower bound \eqref{relatn-lower}. 
For each $t_0\in [0,T)$, taking $T_0\in [0,T)$ such that $0\leq t_0<T_0<T$, as in Step 1 in the proof of Theorem \ref{thm-existence}, we may set $\hat{v}^M_t(y)=\Gamma^M_t \theta(y)$ with $\Gamma^M$ satisfying
\begin{align*}
\begin{cases}
-d\Gamma_t^M=\Lambda -\frac{|\Gamma_t^M|^{q^*}}{(q^*-1)\Lambda^{q*-1}},\quad t\in[0,T_0),\\
\Gamma_{T_0}^M=M,
\end{cases}
\end{align*}
and it is easy to check that $(\hat{v}^M,0)$ is the unique solution to BSPDE \eqref{HJBequ_weight} over the time interval $[0,T_0]$ instead of $[0,T]$. Let $M> \|\tilde v\|_{\sS_{\sF}^{\infty}(0,T_0;L^{\infty}(\cD))}$. Then, over the time interval $[0,T_0]$,  both $\tilde v$ and $\hat v^M$ are bounded, and we may apply Theorem \ref{comparison-thm} as commented in Remark \ref{rmk-comparison-quadratic} and obtain the relation:
$$
\tilde v_{t_0}(y)\leq \hat v^M_{t_0}(y) \leq \frac{C_0 \theta(y)}{(T_0-t_0)^{q-1}}, \quad\text{a.s. }\forall\, y\in \cD,
$$
where the second inequality follows the same to the relation \eqref{eq-upper-bd} with the constant $C_0$ being independent of $(M,T_0)$. Letting $T_0$ tend to $T$ and taking into account the arbitrariness of $t_0$ finally yield that
$$
\tilde u_{t}(y)\leq\frac{C_0}{(T-t)^{q-1}}, \quad\text{a.s. }\forall \, (t,y)\in [0,T)\times \cD,
$$
which is the desired upper bound and together with the lower bound \eqref{relatn-lower}  implies the  uniqueness as a  consequence of Theorem \ref{thm-verification}.
\end{proof}

\begin{appendix}
\section{Generalized It\^o-Wentzell formula by Krylov \cite{Krylov_09} and a corollary}

For an arbitrary domain $\Pi$ in some Euclidean space, let ${C}^\infty_c(\Pi)$ be the class of infinitely differentiable functions with compact support in $\Pi$. Denote  by $\mathscr{D}$ the space of real-valued Schwartz distributions on
$C_c^{\infty}:=C^{\infty}_{c}(\bR^n)$.  By $\mathfrak{D}$ we denote the set of all $\mathscr{D}$-valued functions defined on
$\Omega\times [0,T]$ such that, for any $u\in \mathfrak{D}$ and $\phi\in C_c^{\infty}$, the
function $\langle u,\,\phi\rangle$ is $\sP$-measurable.

For $p=1,2$ we denote by $\mathfrak{D}^p$ the totality of $u\in\mathfrak{D}$ such that for any $R_1,R_2\in(0,\infty)$ and $\phi\in C_c^{\infty}$, we have
$$
\int_0^{R_2} \sup_{|x|\leq R_1} |\langle u(t,\cdot),\phi(\cdot-x)\rangle|^p \,dt<\infty \quad \text{a.s.}
$$

For $u,f,g\in \mathfrak{D}$, we say that the equality
\begin{equation}\label{eq 1}
du(t,x)=f(t,x)\,dt+g(t,x)\,dW_t, \quad t\in [0,T],
\end{equation}
holds in the sense of distributions if $f   \in \mathfrak{D}^1$, $g   \in \mathfrak{D}^2$ and for any $\phi\in C_c^{\infty}$ with probability one we have for all $t\in[0,T]$
$$
\langle u(t,\cdot),\,\phi\rangle=
\langle u(0,\cdot),\,\phi\rangle+\int_0^t\langle f(s,\cdot),\,\phi\rangle\,ds+\int_0^t\langle g(s,\cdot)\,dW_s,\,\phi\rangle.
$$

Let $x_t$ be an $\bR^n$-valued predictable process of the following form
$$
x_t=\int_0^tb_s\,ds+\int_0^t\beta_s\,dW_s +\int_0^t \bar \beta_s\,dB_s,
$$
where $b$, $\beta$ and $\bar\beta$ are $\bar\sF_t$-adapted processes such that for all $\omega\in\Omega$ and $s\in[0,T]$,
we have
$$
{\rm tr } (\beta_s\beta^*_s+\bar\beta_s\bar\beta^*_s) <\infty \quad \hbox { \rm and } \quad \int_0^T [|b_t|+{\rm tr } (\beta_t\beta^*_t+\bar\beta_s\bar\beta^*_s) ]\,dt<\infty.
$$
\begin{thm}[Theorem 1 of \cite{Krylov_09}]\label{Ito-Wentzell}
  Assume that \eqref{eq 1} holds in the sense of distribution and define
  $$
  v(t,x):=u(t,x+x_t).
  $$
  Then we have
  \begin{equation*}
    \begin{split}
      dv(t,x)
      =&\bigg(
      f(t,x+x_t)+\text{tr}\left\{ \frac{1}{2}(\beta_t\beta^*_t+\bar\beta_t\bar\beta^*_t)D^2v(t,x)+Dg(t,x+x_t) \beta_t\right\}
      +b^*_tDv(t,x)\bigg)\,dt\\
      &+\left( g(t,x+x_t)+Dv(t,x)\beta_t \right)\,dW_t
      +Dv(t,x)\bar \beta_t\,dB_t
      ,\quad t\in[0,T]
    \end{split}
  \end{equation*}
  holds in the sense of distributions.
\end{thm}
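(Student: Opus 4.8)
This is Theorem~1 of \cite{Krylov_09}, so strictly speaking nothing needs to be reproved here and the plan is simply to invoke that reference; for completeness I indicate the route, which is the classical one. The first step is to regularize $u$ in the space variable: set $u^{\eps}(t,x)=\langle u(t,\cdot),\rho_{\eps}(x-\cdot)\rangle$ for a standard mollifier $\rho_{\eps}$ on $\bR^n$. Testing the distributional identity \eqref{eq 1} against $\rho_{\eps}(x-\cdot)$ shows that, for each fixed $x$, the process $t\mapsto u^{\eps}(t,x)$ is a genuine continuous It\^o semimartingale with $du^{\eps}(t,x)=f^{\eps}(t,x)\,dt+g^{\eps}(t,x)\,dW_t$, where $f^{\eps},g^{\eps}$ are the corresponding mollifications, and that $x\mapsto u^{\eps}(t,x)$ is smooth for every $(\omega,t)$; the hypotheses $f\in\mathfrak{D}^1$ and $g\in\mathfrak{D}^2$ are exactly what makes these integrals and the forthcoming estimates well defined.

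Next, since $u^{\eps}$ is smooth in $x$, I would apply the ordinary It\^o formula to $v^{\eps}(t,x):=u^{\eps}(t,x+x_t)$, with $x$ regarded as a fixed parameter and $x_t=\int_0^t b_s\,ds+\int_0^t\beta_s\,dW_s+\int_0^t\bar\beta_s\,dB_s$ the given semimartingale. The drift of $x_t$ contributes $b_t^{*}Dv^{\eps}(t,x)$; its martingale parts contribute $Dv^{\eps}(t,x)\beta_t\,dW_t+Dv^{\eps}(t,x)\bar\beta_t\,dB_t$; the quadratic variation $d[x^i,x^j]_t=(\beta_t\beta_t^{*}+\bar\beta_t\bar\beta_t^{*})^{ij}\,dt$ (note $[W,B]\equiv 0$ by independence) contributes $\tfrac12\,\mathrm{tr}\{(\beta_t\beta_t^{*}+\bar\beta_t\bar\beta_t^{*})D^2v^{\eps}(t,x)\}$; and the one genuinely non-classical term, $\mathrm{tr}\{Dg^{\eps}(t,x+x_t)\beta_t\}$, comes from the joint bracket between the It\^o integral $\int g^{\eps}\,dW$ sitting inside $du^{\eps}$ and the integral $\int\beta\,dW$ inside $x_t$, after differentiating $g^{\eps}$ along the path. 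No analogous term involves $\bar\beta$, since $g$ multiplies $dW$ while $\bar\beta$ multiplies the independent $dB$.

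Finally I would send $\eps\to 0$, testing the resulting identity against an arbitrary $\phi\in C_c^{\infty}$. The convergences $f^{\eps}\to f$ in the $\mathfrak{D}^1$ sense, $g^{\eps}\to g$ and $Dg^{\eps}\to Dg$ in the $\mathfrak{D}^2$ sense, together with $Dv^{\eps}\to Dv$ and $D^2v^{\eps}\to D^2v$ in the distributional sense in $x$ and the local integrability of $b,\beta,\bar\beta$, let one pass to the limit in the Lebesgue integrals and, via It\^o's isometry, in the stochastic integrals, yielding the stated formula in the sense of distributions. The main obstacle is precisely controlling the correction term $\int_0^{t}\mathrm{tr}\{Dg^{\eps}(s,x+x_s)\beta_s\}\,ds$ uniformly in $\eps$: this is why $g$ is assumed to lie in $\mathfrak{D}^2$ rather than merely $\mathfrak{D}^1$, since after testing against $\phi$ one needs $\int_0^{R_2}\sup_{|x|\leq R_1}|\langle g(s,\cdot),\phi(\cdot-x)\rangle|^2\,ds<\infty$ in order to bound the gradient term through the mollifier; the remaining estimates are routine.
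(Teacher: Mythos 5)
Your proposal is correct and matches the paper exactly: the paper states this result as a quoted theorem (Theorem 1 of \cite{Krylov_09}) and gives no proof, so citing Krylov is precisely what is done here. Your supplementary sketch is the standard mollification argument (regularize in $x$, apply the smooth-field It\^o--Wentzell/It\^o computation to $u^{\eps}(t,x+x_t)$ including the cross-bracket term $\mathrm{tr}\{Dg^{\eps}\beta_t\}$, then pass to the limit against test functions using the $\mathfrak{D}^1$, $\mathfrak{D}^2$ integrability), and it is sound as a sketch.
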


Let $(L_t)_{t\geq 0}$ be a $\sP$-measurable continuous bounded variation process satisfying $L_0=0$. In Theorem \ref{Ito-Wentzell}, for each $\phi\in C_c^{\infty}$, set
\begin{align*}
u^\phi(t,y)=\langle u(t,y+x_t+\cdot),\,\phi \rangle, \quad \text{for }y\in\bR^n \text{ and }t\geq 0.
\end{align*}
Then, we have for each $y\in\bR^n$ and $t\in[0,T]$,
\begin{align*}
      du^{\phi}(t,y)
      &=\bigg(
       \text{tr}\left\{ \frac{1}{2}(\beta_t\beta^*_t+\bar\beta_t\bar\beta_t^*)D^2u^{\phi}(t,y)+ \langle Dg(t,y+x_t+\cdot)\beta^*_t,\, \phi\rangle\right\}
      +b^*_tDu^{\phi}(t,y)
      \\
      &   \quad
      +  \langle f(t,y+x_t+\cdot),\,\phi\rangle   \bigg)\,dt
      +\left( \langle g(t,y+x_t+\cdot),\,\phi\rangle
      + Du^{\phi}(t,y)\beta_t\right)\,dW_t
      \\
      &\quad
      + Du^{\phi}(t,y)\bar \beta_t\,dB_t, \quad \text{a.s.}
\end{align*}
Notice that both the drift and diffusion terms of the above equation for $u^{\phi}(t,y)$ are smooth with respect to $y\in\bR^n$. A straightforward generalization of \cite[Lemma 4.1]{Bayraktar-Qiu_2017} gives the representation for the composition $u^{\phi}(t,L_t)$ with the following SDE:
\begin{align*}
      du^{\phi}(t,L_t) 
      &=\bigg(
	\text{tr}\left\{ \frac{1}{2}(\beta_t\beta^*_t+\bar\beta_t\bar\beta^*_t) 
       D^2u^{\phi}(t,L_t)+ \langle Dg(t,L_t+x_t+\cdot)\beta_t,\, \phi\rangle\right\}
       +     b^*_tDu^{\phi}(t,L_t)\bigg)\,dt
       \\
      &
      \quad + \langle f(t,L_t+x_t+\cdot),\,\phi\rangle    
            +Du^{\phi}(t,L_t)\,dL_t
      + Du^{\phi}(t,L_t)\bar\beta_t\,dB_t \\
      &
      \quad 
      +\left( \langle g(t,L_t+x_t+\cdot),\,\phi\rangle
      + Du^{\phi}(t,L_t)\beta_t\right)\,dW_t,
       \quad \text{a.s.}
\end{align*}
 In view of the arbitrariness of $\phi$, we have actually arrived at the following assertions.
 
 \begin{cor}\label{Ito-Wentzell-cor}
 In Theorem \ref{Ito-Wentzell}, let $(y_t)_{t\geq 0}$ be an $\bR^n$-valued predictable process of the following form
$$
y_t=\int_0^tb_s\,ds+\int_0^t\beta_s\,dW_s+\int_0^t\bar\beta_s\,dB_s+L_t,
$$
with $(L_t)_{t\geq 0}$ being a $\sP$-measurable continuous bounded variation process satisfying $L_0=0$. If we define 
$$
\Phi(t,y):=u(t,y+y_t),\quad t\geq 0,
$$
then the following equation
  \begin{equation*}
    \begin{split}
      d\Phi(t,y)
      =&\bigg(
      \text{tr}\left\{ \frac{1}{2}(\beta_t\beta^*_t+\bar\beta_t\bar\beta_t^*)
      D^2\Phi(t,y)+ Dg(t,y_t+y)\beta_t\right\}
      +b^*_tD\Phi(t,y)
      +f(t,y+y_t)
      \bigg)\,dt\\
      &
      +D\Phi(t,y) \,dL_t
      +\left( g(t,y+y_t)+D\Phi(t,y)\beta_t \right)\,dW_t
      +D\Phi(t,y)\bar\beta_t \,dB_t
      ,\quad t\in[0,T]
    \end{split}
  \end{equation*}
  holds in the sense of distribution.
 \end{cor}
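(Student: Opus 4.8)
The plan is to obtain the identity for $\Phi$ from Krylov's Theorem~\ref{Ito-Wentzell} by first splitting off the bounded-variation part, then mollifying in space, and finally composing the resulting spatially smooth field with $L$. Concretely, set
\[
x_t=\int_0^t b_s\,ds+\int_0^t\beta_s\,dW_s+\int_0^t\bar\beta_s\,dB_s,
\]
so that $y_t=x_t+L_t$. The assumptions on $b,\beta,\bar\beta$ are precisely those required by Theorem~\ref{Ito-Wentzell}, hence with $v(t,x):=u(t,x+x_t)$ we obtain, in the sense of distributions, the SPDE for $v$ displayed in that theorem, with coefficients $f(t,x+x_t)$, $g(t,x+x_t)$ and the usual second-order and transport terms generated by $\beta_t\beta_t^*+\bar\beta_t\bar\beta_t^*$ and $b_t$.

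Next I would mollify. Fix a test function $\phi\in C_c^\infty$ and set $u^\phi(t,y)=\langle u(t,y+x_t+\cdot),\phi\rangle=\langle v(t,y+\cdot),\phi\rangle$. Pairing the distributional equation for $v$ against $\phi(\cdot-y)$ and transferring spatial derivatives onto $\phi$, one checks that, for each $\omega$, $u^\phi(t,y)$ is an honest function which is smooth in $y$ and solves the SDE in $y$ recorded in the excerpt just before the corollary, namely with drift
\[
\mathrm{tr}\!\left\{\tfrac12(\beta_t\beta_t^*+\bar\beta_t\bar\beta_t^*)D^2u^\phi(t,y)+\langle Dg(t,y+x_t+\cdot)\beta_t,\phi\rangle\right\}+b_t^*Du^\phi(t,y)+\langle f(t,y+x_t+\cdot),\phi\rangle,
\]
diffusion coefficient $\langle g(t,y+x_t+\cdot),\phi\rangle+Du^\phi(t,y)\beta_t$ against $dW_t$, and $Du^\phi(t,y)\bar\beta_t$ against $dB_t$. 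The $\mathfrak{D}^1$ and $\mathfrak{D}^2$ integrability needed to make this rigorous is inherited from the membership of $f,g$ (and hence of the coefficients above) in the corresponding classes together with the stated integrability of $b,\beta,\bar\beta$.

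Finally I would compose $u^\phi(t,\cdot)$ with $L_t$ and then let $\phi$ vary. Because $L$ is $\sP$-measurable, continuous, of bounded variation and $L_0=0$, it has zero quadratic variation and zero covariation with $W$ and $B$; since the coefficients of the SDE for $u^\phi$ are smooth in $y$, the composition rule of \cite[Lemma~4.1]{Bayraktar-Qiu_2017}, or rather its straightforward extension to spatially smooth semimartingale fields, applies and contributes exactly the single extra term $Du^\phi(t,L_t)\,dL_t$, with no further corrections. Using $y_t=x_t+L_t$ one then reads off $u^\phi(t,L_t)=\langle\Phi(t,\cdot),\phi\rangle$, $Du^\phi(t,L_t)=\langle D\Phi(t,\cdot),\phi\rangle$, $D^2u^\phi(t,L_t)=\langle D^2\Phi(t,\cdot),\phi\rangle$, together with the corresponding pairings for the $f$- and $g$-terms evaluated at $\cdot+y_t$; substituting these into the SDE for $u^\phi(t,L_t)$ yields the claimed identity for $\Phi$ tested against $\phi$, and the arbitrariness of $\phi\in C_c^\infty$ upgrades this to the distributional identity in the statement. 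I expect the only delicate point to be the mollification step---verifying that $u^\phi$ genuinely satisfies a classical, spatially smooth SDE with the stated coefficients and the integrability needed to compose legitimately with the bounded-variation process $L$---but this is essentially already spelled out in the discussion preceding the corollary, so the remainder is bookkeeping.
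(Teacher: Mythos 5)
Your proposal is correct and follows essentially the same route as the paper: decompose $y_t=x_t+L_t$, apply Krylov's Theorem \ref{Ito-Wentzell} to the continuous semimartingale part, mollify against $\phi\in C_c^{\infty}$ to obtain a spatially smooth field $u^{\phi}$ satisfying a classical SDE, compose with the bounded-variation process $L$ via the (straightforwardly generalized) It\^o-Kunita-Wentzell formula of \cite[Lemma 4.1]{Bayraktar-Qiu_2017} to pick up the single term $Du^{\phi}(t,L_t)\,dL_t$, and conclude by the arbitrariness of $\phi$. No gaps; this matches the paper's argument step for step.
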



\section{A maximum principle for weak solutions of quasi-linear backward SPDEs in general domains}
Let $\mathcal{O}\subset \mathbb{R}^n$ be a general nonempty domain that may be unbounded.
We consider the following quasi-linear backward SPDE:
\begin{equation}\label{quasilinear-BSPDE}
\left\{\begin{array}{ll}
\begin{split}
-du(t,x)&=[\partial _j(a^{ij}\partial _iu(t,x)+\sigma ^{jr}v^r(t,x))
+f(t,x,u(t,x),\nabla u(t,x),v(t,x))\\
&+\nabla \cdot g(t,x,u(t,x),\nabla u(t,x),v(t,x))]\,dt
-v^r(t,x)\,dW^r_t,
\quad (t,x)\in Q,\\
u(T,x)&=G(x),~~~x\in \mathcal{O},
\end{split}
\end{array}
\right.
\end{equation}
where $Q=[0,T]\times \cO$, general Dirichlet boundary conditions are endowed and the summation is enforced by convention.


\begin{ass}\label{ass-mp}
\begin{enumerate}[(1)]
	\item The random functions
\[
	g(\cdot,\cdot,\cdot,X,Y,Z):\Omega \times [0,T]\times \mathcal{O}\rightarrow \mathbb{R}^n \quad  \text{and}\quad f(\cdot,\cdot,	\cdot,X,Y,Z):\Omega \times [0,T]\times \mathcal{O}\rightarrow \mathbb{R}
\]
are $\sP\otimes \mathcal{B}(\mathcal{O})$-measurable for any $(X,Y,Z)\in \mathbb{R}\times\mathbb{R}^n\times\mathbb{R}^d$ and there exist positive constants $L$, $\kappa$ and $\beta$ such that for each $(X_i,Y_i,Z_i)\in \mathbb{R}\times\mathbb{R}^n\times\mathbb{R}^d$, $i=1,2$,
$$|g(\cdot,\cdot,\cdot,X_1,Y_1,Z_1)-g(\cdot,\cdot,\cdot,X_2,Y_2,Z_2)|\leq L|X_1-X_2|+\frac{\kappa}{2}|Y_1-Y_2|+\sqrt{\beta}|Z_1-Z_2|$$
and
$$|f(\cdot,\cdot,\cdot,X_1,Y_1,Z_1)-f(\cdot,\cdot,\cdot,X_2,Y_2,Z_2)|\leq L(|X_1-X_2|+|Y_1-Y_2|+|Z_1-Z_2|).$$

	\item  The coefficients $a$ and $\sigma$ are $\mathbb{F}\otimes \mathcal{B}(\mathcal{O})$-measurable and there exist positive constants $\varrho >1$, $\lambda$ and $\Lambda$ such that for each $\eta \in \mathbb{R}^n$ and $(\omega,t,x)\in\Omega\times[0,T]\times\mathcal{O}$,
\begin{align*}
	\lambda |\eta|^2\leq (2a^{ij}(\omega ,t,x)-\varrho \sigma ^{ir}\sigma ^{jr}(\omega,t,x))\eta ^i\eta ^j &\leq \Lambda |\eta|^2\\
	|a(\omega ,t,x)|+|\sigma (\omega ,t,x)| &\leq \Lambda,
\end{align*}
and
$$\lambda -\kappa -\varrho '\beta >0 ~with~\varrho ':=\frac{\varrho}{\varrho -1}.$$

	\item 
The terminal value satisfies $G\in L^{\infty}(\Omega , \mathcal{F}_T, L^2(\mathcal{O}))\cap L^{\infty}(\Omega\times\mathcal{O})$ and for some $p>\max\{n+2,2+4/n\}$, one has
\begin{align*}
	g_0 & :=g(\cdot,\cdot,\cdot,0,0,0)\in  \sM^2_{\sF}(0,T;L^2(\cO)) \cap \sM^p_{\sF}(0,T;L^p(\cO))\\
	f_0 & :=f(\cdot,\cdot,\cdot,0,0,0)\in
			\sM^2_{\sF}(0,T;L^2(\cO)) \cap \sM^{\frac{p(n+2)}{p+n+2}}_{\sF}(0,T;L^{\frac{p(n+2)}{p+n+2}}(\cO)).
\end{align*}

\item  The function $x \mapsto g(\cdot,\cdot,\cdot,x,0,0)$ is uniformly Lipschitz continuous in norm:
\begin{align*}
	\|g(\cdot,\cdot,\cdot,X_1,0,0)-g(\cdot,\cdot,\cdot,X_2,0,0)\|_{\sM^p_{\sF}(0,T;L^p(\cO))}\leq L|X_1-X_2|;\\
    \|g(\cdot,\cdot,\cdot,X_1,0,0)-g(\cdot,\cdot,\cdot,X_2,0,0)\|_{\sM^2_{\sF}(0,T;L^2(\cO))}\leq L|X_1-X_2|.
\end{align*}
\end{enumerate}
\end{ass}

\begin{defn}\label{definition3.1}
The pair $(u,v)\in \left(\sS^2_{\sF}(0,T;L^2(\cO))\cap \sL^2_{\sF}(0,T;H^{1,2}(\cO))\right) \times \sL^2_{\sF}(0,T;L^2(\cO))$ is called a weak solution to backward SPDE \eqref{quasilinear-BSPDE} if
 BSPDE \eqref{quasilinear-BSPDE} holds in the weak sense, i.e., for each $\varphi \in \mathcal{C}_c^{\infty}(\mathbb{R}^+)\otimes \mathcal{C}_c^{\infty}(\mathcal{O})$, we have
\begin{align*}
& \langle u(t,\cdot),\varphi (t,\cdot)\rangle \\
= &
\langle G(\cdot),\varphi(T,\cdot)\rangle
 -\int_t^T\left\{\langle u(s,\cdot),\partial _s\varphi (s,\cdot)\rangle+\langle\partial _j\varphi (s,\cdot),a^{ij}(s,\cdot)\partial _iu(s,\cdot) +\sigma ^{jr}v^r(s,\cdot)\rangle\right\}ds \\
&+\int_t^T\left[\langle f(s,\cdot,u(s,\cdot),\nabla u(s,\cdot),v(s,\cdot)),\varphi (s,\cdot)\rangle
-\langle g^j(s,\cdot,u(s,\cdot),\nabla u(s,\cdot),v(s,\cdot)),\partial_j \varphi (s,\cdot)\rangle\right]ds\\
& -\int_t^T\langle\varphi (s,\cdot),v^r(s,\cdot)dW_s^r\rangle,\quad\text{a.s.}
\end{align*}
\end{defn}

\begin{prop}\label{MP-general-domain-quasilinear-BSPDE}


	If Assumption \ref{ass-mp} holds and $(u,v)$ is a weak solution of backward SPDE \eqref{quasilinear-BSPDE}, then we have $u^{\pm}\in \sS^{\infty}_{w,\sF}(0,T;L^{\infty}(\cO))$ with
\begin{equation}\label{MP-general-domain-quasilinear-BSPDE-1}
\begin{split}
&\textrm{esssup}_{\omega\in\Omega} 
\sup_{t\in[0,T]} \esssup_{x\in\cO} u^{\pm}(\omega,t,x) \\
\leq \, & C\left(\textrm{esssup}_{(\omega,t,x)\in\Omega \times \partial _p\mathcal{O}_t}u^{\pm}+A(f_0^{\pm},g_0 )\right)
\end{split}
\end{equation}
where $C$ depends on $\lambda$, $\kappa$, $\beta$, $L$, $\varrho$, $T$, $p$ and $n$, while $A(f_0^{\pm},g_0 )$ is expressed in terms of some quantities related to the coefficients $f_0$ and $g_0$.
\end{prop}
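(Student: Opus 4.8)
The plan is to run a De Giorgi--Moser level-set iteration in the spirit of the maximum principles for quasi-linear backward SPDEs of \cite{QiuTangMPBSPDE11,fu2017maximum}, but carried out within the conditional-expectation framework forced by the target space $\sS^{\infty}_{w,\sF}(0,T;L^{\infty}(\cO))$. Replacing $(u,v,f,g,G)$ by $(-u,-v,-f,-g,-G)$ reduces matters to bounding $u^{+}$. Fix a base level $k_0:=\esssup_{(\omega,t,x)\in\Omega\times\partial_p\cO_t}u^{+}+A(f_0^{+},g_0)$, where $A(f_0^{+},g_0)$ is to be specified at the end as a combination of the $\sM^2$-, $\sM^{p}$- and $\sM^{p(n+2)/(p+n+2)}$-norms of $g_0$ and $f_0^{+}$. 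For $k\ge k_0$ set $w^{(k)}:=(u-k)^{+}$; since $k$ dominates the parabolic-boundary supremum of $u$ and general Dirichlet data are imposed, $w^{(k)}(t,\cdot)$ lies in $H^{1,2}_0$ of its support and extends by zero to $\bR^n$, and moreover $w^{(k)}_T=0$ because $k\ge\|G^{+}\|_{L^{\infty}}$. Applying the generalized It\^o formula for the square norm of $(u_t-k)^{+}$ (justified for weak solutions in a general domain by the usual spatial mollification argument), using the stochastic-parabolicity bound $2a^{ij}\eta^i\eta^j-\varrho\,\sigma^{ir}\sigma^{jr}\eta^i\eta^j\ge\lambda|\eta|^2$ together with Young's inequality of weight $\varrho$ to absorb the $\sigma^{jr}v^{r}$ cross term against $\|\mathbf{1}_{\{u>k\}}v\|^{2}$, using the structural Lipschitz constants $\tfrac{\kappa}{2},\sqrt{\beta}$ against the residual coercivity $\lambda-\kappa-\varrho'\beta>0$, and finally taking conditional expectations to annihilate the martingale term, one obtains an energy inequality of the schematic form
\[
\esssup_{\omega}\sup_{t}\|w^{(k)}_t\|^{2}
+\esssup_{\omega}\sup_{\tau}E\!\left[\int_{\tau}^{T}\|\nabla w^{(k)}_s\|^{2}\,ds\,\Big|\,\sF_{\tau}\right]
\le C\,\esssup_{\omega}\sup_{\tau}E\!\left[\int_{\tau}^{T}\big(\|w^{(k)}_s\|^{2}+\Xi_k(s)\big)\,ds\,\Big|\,\sF_{\tau}\right],
\]
where $\Xi_k$ collects the forcing terms, is supported on $A_k(s):=\{x\in\cO:u(s,x)>k\}$, and is dominated via H\"older by powers of $\mathbf{1}_{A_k}$ against the $L^{p}$- and $L^{p(n+2)/(p+n+2)}$-norms of $g_0,f_0^{+}$; a Gronwall argument in $\tau$ removes the $\|w^{(k)}\|^{2}$ term on the right.

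Next I would invoke the anisotropic parabolic Sobolev embedding $L^{\infty}(0,T;L^{2})\cap L^{2}(0,T;H^{1}_{0})\hookrightarrow L^{2(n+2)/n}$, used for the zero-extended truncations so that its constant depends only on $n$ and is thus insensitive to the possible unboundedness of $\cO$; combined with the energy inequality this controls a conditionally-averaged $L^{2(n+2)/n}$-quantity for $w^{(k)}$ uniformly in $\omega$. For $k'>k\ge k_0$ one has $\{u>k'\}\subset\{u>k\}$ and $w^{(k)}\ge k'-k$ on $\{u>k'\}$, so H\"older's inequality converts the embedding bound into a recursion for the conditionally-averaged superlevel functional $\varphi(k):=\esssup_{\omega}\sup_{\tau}E[\int_{\tau}^{T}|A_k(s)|\,ds\,|\,\sF_{\tau}]$ of the form
\[
\varphi(k')\le\frac{C}{(k'-k)^{\gamma}}\,\varphi(k)^{1+\delta},\qquad k'>k\ge k_0,
\]
with explicit $\gamma>0$ and $\delta>0$ depending on $n$ and $p$; the hypothesis $p>\max\{n+2,\,2+4/n\}$ is precisely what makes the forcing contribution decay with a power of $\varphi$ strictly larger than one, so that the recursion closes, while finiteness of $\varphi(k)$ for $k$ above the base level follows from the $k_0$-level energy estimate and the a priori integrability of $u$. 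The standard fast-geometric-convergence lemma (see \cite[Lemma~4.1]{QiuTangMPBSPDE11} or \cite{fu2017maximum}) then yields $\varphi(k_0+d)=0$ for an explicit $d>0$ controlled by $\varphi(k_0)$ and the data, whence $u^{+}\le k_0+d$ a.e.\ on $Q$; choosing $A(f_0^{+},g_0)$ to absorb $d$ gives \eqref{MP-general-domain-quasilinear-BSPDE-1}. The weak time-continuity $u^{+}\in\sS^{\infty}_{w,\sF}(0,T;L^{\infty}(\cO))$ comes from the uniform-in-$\omega$ energy bound together with the $L^{2}(\cO)$-valued weak continuity of $u$ already built into the notion of weak solution.

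The main obstacle I anticipate is keeping every estimate uniform over $\omega\in\Omega$ and over the conditioning time $\tau$, as is required to land in $\sS^{\infty}_{w,\sF}$: this forces one to work throughout with the $\sM^{p}$-type (conditional-expectation) norms rather than with plain $L^{p}(\Omega\times Q)$ norms, and to check that the generalized It\^o formula for $\|(u-k)^{+}\|^{2}$ and the martingale-vanishing step survive the subsequent $\esssup_{\omega}$ --- this is the genuinely stochastic difficulty absent from the deterministic De Giorgi argument. A secondary technical point is the unboundedness of $\cO$, which obliges one to apply the embedding and interpolation inequalities in their $\bR^{n}$ form for zero-extended functions and to verify that no constant degenerates; this is exactly why the Dirichlet structure, i.e.\ the vanishing of $w^{(k)}$ on $\partial_p\cO$, is indispensable here and why the argument would not transfer verbatim to a Neumann condition.
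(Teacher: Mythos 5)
Your De Giorgi-type level-set iteration in the conditional-norm framework is, in outline, exactly the mechanism this proposition rests on: the paper does not rerun the iteration but simply invokes \cite[Lemma 4.4]{fu2017maximum} (and the iterations in the proof of Theorem 4.1 there), observing that by time-continuity of weak solutions the $\mathcal{V}_2(Q)$-norm may be taken with $\textrm{esssup}_{\omega}\sup_{t}$, which gives both $u^{\pm}\in \sS^{\infty}_{w,\sF}(0,T;L^{\infty}(\cO))$ and the estimate \eqref{MP-general-domain-quasilinear-BSPDE-1}. So your proposal is essentially the same approach, reconstructed in-line rather than imported by citation.
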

\begin{rmk}
 Proposition \eqref{MP-general-domain-quasilinear-BSPDE} follows straightforwardly from \cite[Lemma 4.4]{fu2017maximum}. The only difference lies in the fact that we replace the norm $\textrm{esssup}_{(\omega,t,x)\in\Omega \times \mathcal{O}_t}u^{\pm}$ by $\|u^{\pm}\|_{\sS^{\infty}_{w,\sF}(0,T;L^{\infty}(\cO))}$ in the estimate \eqref{MP-general-domain-quasilinear-BSPDE-1}. In fact, if we look into the norms used in \cite[Theorem 3.2]{fu2017maximum}, the norm of space $\mathcal{V}_2(Q)$ could be equivalently set as
 $$
 \|u\|_{\mathcal V_2(Q)}:=\left(\textrm{esssup}_{\omega\in\Omega} \sup_{t\in[0,T]}  \|u(\omega,t,\cdot)\|_{L^2(\cO)}^2+ \|Du\|_{0,2;Q}^2   \right)^{1/2}, \quad \text{for }u\in \mathcal{V}_2(Q),
 $$ 
 because of the time-continuity of the weak solutions, and the iterations in \cite[Proof of Theorem 4.1, pages 317--324]{fu2017maximum} lead to some constant $K\geq 0$ such that  $\|(u-K_{\pm})^{\pm}\|_{\mathcal V_2(Q)}=0$ which yields the fact $u^{\pm}\in \sS^{\infty}_{w,\sF}(0,T;L^{\infty}(\cO))$ as well as the estimate \eqref{MP-general-domain-quasilinear-BSPDE-1}.
 \end{rmk}

\end{appendix}

\bibliographystyle{siam}

\end{document}